\tikzstyle{black_dot}=[fill=black, draw=black, shape=circle, inner sep=0pt, minimum size=3pt]
\tikzstyle{white_circle}=[fill=white, draw=black, shape=circle, inner sep=0pt, minimum size=3pt]
\tikzstyle{empty}=[fill=none, draw=none, shape=rectangle, minimum size=0pt, inner sep=0pt]
\tikzstyle{dotted_edge}=[-, dashed]
\tikzstyle{blue_edge}=[-, draw=blue]
\tikzstyle{red_edge}=[-, draw=red]
\tikzstyle{green_edge}=[-, draw=green]
\tikzstyle{yellow_edge}=[-, draw=yellow]
\tikzstyle{right_arrow}=[fill=none, ->]
\newtheorem{tw}{Theorem}  
\newtheorem{lem}[tw]{Lemma}  
\newtheorem{problem}[tw]{Problem}  
\newtheorem{cnj}[tw]{Conjecture}  
\newtheorem{cor}[tw]{Corollary}  
\newtheorem{rem}[tw]{Remark}  
\newtheorem{claim}[tw]{Claim}
\newtheorem{observation}[tw]{Observation}
\title{On a new problem about the local irregularity
of graphs}
\author{Igor Grzelec\thanks{Department of Discrete Mathematics, AGH University of Krakow, Poland.}, Tomáš Madaras\thanks{Institute of mathematics, P.J. Šafárik University, Košice, Slovakia}\thanks{The corresponding author. Email: tomas.madaras@upjs.sk}, Alfréd Onderko\footnotemark[2], Roman Soták\footnotemark[2]}
\date{October 2024}
\begin{document}

\maketitle
\begin{abstract}
A graph/multigraph $G$ is \textit{locally irregular} if endvertices of every its edge possess different degrees. The \textit{locally irregular edge coloring} of $G$ is its edge coloring with the property that every color induces a locally irregular sub(multi)graph of $G$; if such a coloring of $G$ exists, the minimum number of colors to color $G$ in this way is the \textit{locally irregular chromatic index} of $G$ (denoted by ${\rm lir}(G)$). We state the following new problem: given a connected graph $G$ distinct from $K_2$ or $K_3$, what is the minimum number of edges of $G$ to be doubled such that the resulting  multigraph is locally irregular edge colorable (with no monochromatic multiedges) using at most two colors? This problem is closely related to several open conjectures (like the Local Irregularity Conjecture for graphs and 2-multigraphs, or (2, 2)-Conjecture) and other similar edge coloring concepts. We present the solution of this problem for several graph classes: paths, cycles, trees, complete graphs, complete $k$-partite graphs, split graphs and powers of cycles. Our solution for complete $k$-partite graphs ($k>1$) and powers of cycles (which are not complete graphs) shows that, in this case, the locally irregular chromatic index equals 2. We also consider this problem for special families of cacti and prove that the minimum number of edges in a graph whose doubling yields an local irregularly colorable  multigraph does not have a constant upper bound not only for locally irregular uncolorable cacti. \\

\textit{Keywords:} Locally irregular graphs  and multigraphs; Locally irregular edge coloring; Powers of cycles.
\end{abstract}

\section{Introduction}
Throughout this paper, we consider finite graphs and multigraphs. 
We say that a (multi)graph $G$ is \textit{locally irregular} if, for every edge of $G$, its endvertices have different degrees. 
An edge coloring of $G$ such that each color class induces a locally irregular subgraph of $G$ is called a \textit{locally irregular coloring}; the one that uses $k$ colors is called locally irregular $k$-edge coloring ( $k$-liec in short).
By $\mathrm{lir}(G)$, we denote the \textit{locally irregular chromatic index} of $G$, which is defined as the minimum number $k$ such that there exists a $k$-liec of $G$. 

The problem of determining the value of $\mathrm{lir}(G)$ has close connection to the well-known 1-2-3 Conjecture proposed in \cite{Karonski Luczak Thomason} by Karoński, Łuczak and Thomason (it was recently proved by Keusch in \cite{Keusch}). While it states that it is enough to multiply some edges of a graph at most three times to transform it into a locally irregular multigraph, one can observe (\cite{Baudon Bensmail Przybylo Wozniak}, Observation 1.3) that, for a regular graph $G$, the existence of such a transform involving just doubling the edges is equivalent to the existence of a 2-liec of $G$.  

Note, however, that not every graph can be locally irregularly colorable. To characterize completely the family of all uncolorable graphs, let us define recursively the family $\mathfrak{T}$ in the following way:
\begin{itemize}
  \item The cycle $C_3$ is the member of  $\mathfrak{T}$.
  \item Let $G$ be a graph from $\mathfrak{T}$. Let $G'$ be a graph obtained from $G$ by identifying a vertex $v\in V(G)$ of degree two lying on a $3$-cycle in $G$, with an endvertex of an even-length path or with an endvertex of an odd-length path such that the other endvertex of that path is identified with a vertex of a new $3$-cycle. Then $G'$ belongs to $\mathfrak{T}$.
\end{itemize}
Take now the family $\mathfrak{T'}$ consisting of all graphs from $\mathfrak{T}$, all odd-length paths, and all odd-length cycles. Baudon, Bensmail, Przybyło, and Woźniak showed in \cite{Baudon Bensmail Przybylo Wozniak} that the only locally irregular uncolorable connected graphs are those ones from $\mathfrak{T'}$.
In addition, they also proposed the Local Irregularity Conjecture which states that every connected graph $G \notin \mathfrak{T'}$ satisfies $\mathrm{lir}(G)\leq 3$. 

Since $\mathfrak{T}'$ is a subclass of the family of cacti (that is, connected graphs in which no two cycles share more than one vertex), determining the value of $\mathrm{lir}(G)$ for cacti seemed interesting in particular.
When studying locally irregular colorings of sparse graphs (cacti among them), in 2021 Sedlar and \v Skrekovski~\cite{Sedlar Skrekovski} disproved the original Local Irregularity Conjecture 
by showing that the bow-tie graph $B$ (see Figure \ref{bow-tie graph}) does not have a locally irregular coloring with fewer than four colors.
\begin{figure}[h!]
\centering
    \begin{tikzpicture}
    \begin{pgfonlayer}{nodelayer}
    \node [style={black_dot}] (0) at (-0.5, 0) {};
    \node [style={black_dot}] (1) at (0.5, 0) {};
    \node [style={black_dot}] (2) at (-0.5, 1) {};
    \node [style={black_dot}] (3) at (-1.5, 0.5) {};
    \node [style={black_dot}] (4) at (-1.5, -0.5) {};
    \node [style={black_dot}] (5) at (-0.5, -1) {};
    \node [style={black_dot}] (6) at (0.5, -1) {};
    \node [style={black_dot}] (7) at (0.5, 1) {};
    \node [style={black_dot}] (8) at (1.5, 0.5) {};
    \node [style={black_dot}] (9) at (1.5, -0.5) {};
    \end{pgfonlayer}
    \begin{pgfonlayer}{edgelayer}
    \draw [style={red_edge}] (2) to (0);
    \draw [style={red_edge}] (0) to (1);
    \draw [style={red_edge}] (0) to (5);
    \draw [style={blue_edge}] (5) to (4);
    \draw [style={blue_edge}] (4) to (0);
    \draw [style={blue_edge}] (1) to (9);
    \draw [style={blue_edge}] (9) to (6);
    \draw [style={green_edge}] (0) to (3);
    \draw [style={green_edge}] (3) to (2);
    \draw [style={green_edge}] (7) to (8);
    \draw [style={green_edge}] (8) to (1);
    \draw [style={yellow_edge}] (7) to (1);
    \draw [style={yellow_edge}] (1) to (6);
    \end{pgfonlayer}
    \end{tikzpicture}
\caption{Locally irregular 4-coloring of the bow-tie graph $B$.}
\label{bow-tie graph}
\end{figure}
In \cite{Sedlar Skrekovski 2}, Sedlar and \v Skrekovski proved that every locally irregular colorable cactus $G \neq B$ satisfies $\mathrm{lir}(G)\leq 3$. 
Furthermore, they established the following improved version of the Local Irregularity Conjecture.
\begin{cnj}[Local Irregularity Conjecture~\cite{Sedlar Skrekovski 2}]
\label{graph3}
    Let $G \neq B$ be a connected locally irregular colorable graph. Then $\mathrm{lir}(G)\leq 3$.
\end{cnj}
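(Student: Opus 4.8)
Since the final statement is the Local Irregularity Conjecture, which remains open, no complete proof is available; what follows is a plan of attack rather than a finished argument, organized around the regimes in which the statement is already understood. The overall strategy is to partition the class of connected locally irregular colorable graphs $G \neq B$ according to sparsity, and to treat each regime with a tailored technique, aiming in every case for an edge decomposition into at most three locally irregular subgraphs.

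For the dense regime I would invoke a probabilistic decomposition. The goal is to show that every graph of sufficiently large minimum degree admits a partition of its edge set into at most three locally irregular subgraphs. The standard route is to fix a near-balanced base assignment of colors (or a suitable orientation), and then to repair monochromatic conflicts, that is, edges whose endpoints have equal degree in their own color class, by local recoloring. When $\delta(G)$ is large, each vertex has many edges available to redistribute, so a Lovász-Local-Lemma or entropy-compression argument should make conflicts sparse enough to be absorbed while keeping the palette at three; this mirrors the known results giving $\mathrm{lir}(G)\le 3$ for graphs of very large minimum degree.

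For the sparse regime I would lean on structural induction, extending the method that Sedlar and Škrekovski used to settle the cactus case. The idea is to locate a reducible configuration, such as a pendant path, a leaf block, a low-degree cut vertex, or a short cycle, remove or contract it, color the smaller graph by the induction hypothesis, and then extend the coloring across the removed piece using the degree freedom created at the attachment vertex. The family $\mathfrak{T}'$ of uncolorable graphs must be excluded carefully at every reduction step, since these are precisely the obstructions, and the bow-tie $B$ must be tracked as the unique exceptional colorable graph forcing four colors.

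The hard part will be the intermediate regime and the interface between these two strategies. Local irregularity is a genuinely global constraint: recoloring a single edge changes the color-degrees of both endpoints and can trigger new conflicts arbitrarily far away, so neither the probabilistic repair nor the inductive extension composes cleanly across all degree profiles. One must, in particular, rule out for every $G \neq B$ the emergence of bow-tie-like local obstructions during the inductive merging of blocks, and one must bridge the gap between ``minimum degree large enough for probabilistic methods'' and ``sparse enough for explicit reduction.'' Closing this gap uniformly, rather than class by class as in the complete $k$-partite, split-graph, and power-of-cycle results obtained later in this paper, is exactly why the conjecture remains open.
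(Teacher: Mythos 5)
The statement you were asked about is a conjecture in the paper (Conjecture~1, attributed to Sedlar and \v Skrekovski), and the paper gives no proof of it whatsoever --- it is stated purely as background motivation --- so you are right that there is nothing to prove and only a plan of attack can be offered. Your assessment matches the paper's treatment exactly: the conjecture remains open, the regimes you describe (large minimum degree, cacti and other sparse classes, the exceptional graph $B$, and the unresolved intermediate regime) correspond to the partial results the paper cites, and the paper's own contributions (paths, cycles, trees, complete multipartite graphs, split graphs, powers of cycles, and the quantity $\mathcal{D}_{\mathrm{lir}}$) address a related but different problem rather than this conjecture itself.
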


Conjecture~\ref{graph3} was confirmed for graphs from specific classes like trees (see~\cite{Baudon Bensmail}), graphs (general or regular) with high minimum degree (see~\cite{Przybylo} and~\cite{Baudon Bensmail Przybylo Wozniak}), locally irregularly colorable split graphs and subcubic claw-free graphs (see~\cite{Lintzmayer Mota Sambinelli} and~\cite{Luzar Macekova}). 
Apart from the above-mentioned results, it was proved in~\cite{Bensmail Dross Nisse} that $\mathrm{lir}(G)\leq 15$ if $G$ is planar. 
For all connected graphs, Bensmail, Merker and Thomassen~\cite{Bensmail Merker Thomassen} showed that $\mathrm{lir}(G)\leq 328$ if $G \notin \mathfrak{T'}$;  
Lu\v zar, Przybyło and Soták~\cite{Luzar Przybylo Sotak} later decreased this bound 220.

For 2-multigraphs (that is, multigraphs in which the multiplicity of every edge is exactly two; the one resulting in this way from a simple graph $G$ will be denoted by $^2G$ in the sequel),
Grzelec and Woźniak established the following conjecture:
\begin{cnj}[Local Irregularity Conjecture for 2-multigraphs~\cite{Grzelec wozniak}]
\label{main}
    Let $G$ be a connected graph distinct from $K_2$. Then ${\rm lir}(^2G)\leq 2$.
\end{cnj}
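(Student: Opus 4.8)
The plan is to reformulate the two-coloring of $^2G$ as a single edge labeling and then argue by induction on the block/ear structure of $G$. First, note that in $^2G$ every vertex $v$ has the even degree $2d(v)$, where $d(v)=\deg_G(v)$. A two-coloring with colors red and blue is determined by choosing, for each edge $e=uv$ of $G$, how many of its two parallel copies are red; write this number as $a_e\in\{0,1,2\}$ and set $s_e=a_e-1\in\{-1,0,1\}$. Then the red degree of $v$ is $r(v)=d(v)+g(v)$ and its blue degree is $b(v)=d(v)-g(v)$, where $g(v)=\sum_{e\ni v}s_e$. The coloring is locally irregular exactly when: (i) for every edge $e=uv$ with $s_e\ge 0$ (a red copy present) we have $d(u)+g(u)\neq d(v)+g(v)$, and (ii) for every edge $e=uv$ with $s_e\le 0$ (a blue copy present) we have $d(u)-g(u)\neq d(v)-g(v)$. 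An edge split $1$--$1$ (the value $s_e=0$) must satisfy both inequalities, while a monochromatic edge ($s_e=\pm1$) needs only one. The goal is to produce a labeling $s\colon E\to\{-1,0,1\}$ meeting (i) and (ii).

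Second, I would reduce to $2$-connected graphs. Cut vertices split $G$ into blocks, and the imbalance $g$ at a cut vertex is the sum of the contributions coming from the incident blocks; since each block can in principle be processed with a prescribed target for its share of $g$ at the shared vertex, one glues block solutions together (bridges and trees being the extreme, tree-like case). The excluded graph $K_2$ is precisely the single block that fails, because $^2K_2$ is $2$-regular and no split makes either color class locally irregular; tracking such degenerate blocks is part of the bookkeeping in the reduction.

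Third, and at the core of the argument, I would treat a single $2$-connected block via an ear decomposition. Starting from a cycle and repeatedly attaching ears (paths whose internal vertices are new), I would choose the labels $s_e$ along each newly added path so that (a) the new internal vertices receive red and blue degrees distinct from those of their neighbors, and (b) any equality conflict created at the two attachment vertices is repaired. The essential freedom is that along a path of length at least two one can shift the values $g$ at the internal vertices by $\pm1$ almost independently, which is exactly what the doubling buys us over the simple-graph setting, where odd paths and odd cycles are genuinely uncolorable. The delicate base cases are odd cycles and the attachment of short odd ears, where the two inequalities (i) and (ii) interact, and these must be checked directly.

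The main obstacle is the general inductive step when many mutually adjacent vertices share the same $G$-degree. For such vertices the constraints (i) and (ii) collapse to forcing the \emph{imbalances} $g$ themselves to differ, yet each edge can move a vertex's imbalance by only $\pm1$, so conflicts can propagate and cannot always be confined locally. Controlling this cascade uniformly across \emph{all} connected graphs is precisely where a single argument breaks down, which is why the statement remains a conjecture and is settled class by class; for the highly structured families considered in this paper (paths, cycles, trees, complete and complete multipartite graphs, split graphs, powers of cycles, and the cacti studied below) the degree structure is regular enough that the ear/induction scheme, or an explicit construction, can be pushed through.
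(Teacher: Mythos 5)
There is a fundamental point to make first: the statement you were asked to prove is Conjecture~\ref{main}, an open conjecture due to Grzelec and Wo\'zniak which this paper does not prove --- it only cites it, together with the partial results known for it (confirmation for several particular graph families, the case of cacti, and the general constant bound $\mathrm{lir}(^2G)\leq 76$). So there is no proof in the paper to compare yours against, and any complete proof would in fact be a substantial new result rather than a reconstruction.

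Your attempt, to its credit, does not pretend otherwise: your final paragraph concedes that the inductive scheme cannot be pushed through uniformly, which means what you have written is a strategy sketch, not a proof. The reformulation itself is sound --- encoding a red/blue coloring of $^2G$ by $s_e=a_e-1\in\{-1,0,1\}$, expressing the color degrees as $d(v)\pm g(v)$ with $g(v)=\sum_{e\ni v}s_e$, and stating your conditions (i) and (ii) is a correct and clean translation of local irregularity. But both structural steps that would carry the argument are missing. The reduction to $2$-connected blocks asserts that each block ``can in principle be processed with a prescribed target'' for its contribution to $g$ at the cut vertex; this is exactly the hard part, since nothing guarantees a block admits labelings realizing the needed targets, and no lemma characterizing the achievable values is stated, let alone proved. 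The ear-decomposition step has the same defect: attaching an ear can create equal color degrees at the two attachment vertices, any repair shifts $g$ there by $\pm 1$, and that perturbation can propagate into the already-processed part of the graph --- precisely the cascade you yourself identify, with no mechanism offered to stop it. Regular graphs make the difficulty concrete: there every constraint collapses to requiring $g(u)\neq g(v)$ (in the relevant color) across each edge, and since a single edge moves $g$ by at most one, local fixes genuinely interact globally. In short: correct reformulation, plausible program, but the two lemmas that would constitute the proof are absent, which is consistent with the statement still being a conjecture.
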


This conjecture was confirmed in~\cite{Grzelec wozniak} for graphs of several particular families.
Moreover, in the same article~\cite{Grzelec wozniak}, a general constant upper bound of 76 was proved for $\mathrm{lir}(^2G)$.
Then, in~\cite{Grzelec wozniak2}, Grzelec and Woźniak proved Conjecture~\ref{main} for cacti. 

Another approach to the local irregularity of multigraphs was introduced in~\cite{Baudon Bensmail Davot Hocquard} by Baudon et al. It combines neighbor-sum-distinguishing edge coloring and graph decomposition into locally irregular graphs. In the neighbor-sum-distinguishing edge coloring, two vertices $x$ and $y$ are \textit{distinguished} if $\sigma (x)\neq \sigma (y)$, where $\sigma (x):=\sum \limits_{x \in e}c(e)$, $\sigma (y):=\sum \limits_{y \in e}c(e)$ and $c$ is an edge coloring using colors from $\{1, 2, \dots, k\}$. 
Such a coloring can be seen as a transformation of $G$ to a multigraph such that each edge $e$ of $G$ is replaced by $c(e)$ parallel edges.
The following conjecture was proposed:  
\begin{cnj}[$(2, 2)$-Conjecture~\cite{Baudon Bensmail Davot Hocquard}]
\label{2,2n}
Every connected graph $G$ of order at least four can be decomposed into two subgraphs $G_1$ and $G_2$ such that there exist locally irregular multigraphs $M_1$ and $M_2$ for which
$G_1 \subseteq M_1 \subseteq ^2G_1$ and $G_2 \subseteq M_2 \subseteq ^2G_2$.
\end{cnj}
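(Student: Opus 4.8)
The plan is to first recast the conjecture in the language of edge-weightings, which eliminates the multigraph formalism and exposes the real combinatorial content. Observe that choosing a locally irregular multigraph $M_i$ with $G_i \subseteq M_i \subseteq {}^2G_i$ amounts to choosing, for each edge $e \in E(G_i)$, a multiplicity $w_i(e) \in \{1,2\}$; the degree of a vertex $v$ in $M_i$ is then $\sum_{e \ni v} w_i(e)$, and $M_i$ is locally irregular exactly when every edge $uv \in E(G_i)$ satisfies $\sum_{e \ni u} w_i(e) \neq \sum_{e \ni v} w_i(e)$. Hence the $(2,2)$-Conjecture is equivalent to: the edges of every connected graph $G$ of order at least four can be partitioned into two (possibly disconnected, possibly empty) subgraphs $G_1, G_2$ so that each $G_i$ admits a neighbor-sum-distinguishing edge-weighting with weights in $\{1,2\}$, only adjacent vertices of $G_i$ needing to be distinguished. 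I would work entirely within this reformulation.

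Next I would isolate the local obstructions a weighting must avoid inside each part. A component of $G_i$ isomorphic to $K_2$ is fatal: with weights in $\{1,2\}$ both endpoints of the single edge always receive equal weighted degree. A short parity computation shows a $K_3$ component is fatal as well (the three vertex-sums would have to be $2,3,4$, whose total is odd, whereas the total of all vertex-sums equals twice the total edge weight), and odd cycles are delicate for the same reason. Thus the genuine content of a proof is an edge-partition that keeps both parts free of these bad components while leaving enough weighting freedom at each vertex. My approach would be induction on $|E(G)|$: delete a well-chosen edge, or a pendant or low-degree vertex that keeps the remainder connected of order at least four, apply the inductive hypothesis, and then re-insert the deleted piece, using the slack in the achievable weighted-degree ranges $[\deg_{G_i}(v),\,2\deg_{G_i}(v)]$ to repair any newly created conflict. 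Vertices of degree one and two, together with the freedom to decide which part absorbs a given edge, should supply exactly the flexibility needed to avoid $K_2$ and $K_3$ components in the extension step.

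The hard part is that the doubling choices are coupled: doubling an edge changes the weighted degree of both endpoints simultaneously, so one cannot assign each vertex an independent target degree. This turns each part into a degree-constrained subgraph problem (select the set of doubled edges) subject to adjacency-distinguishing constraints, and the two parts interact through the shared vertex set. I expect the decisive difficulty to surface in dense or near-regular regions, where many mutually adjacent vertices start with equal degrees in $G$ and the two-weight palette leaves little room to separate them; there the partition must be engineered so that in at least one part each such vertex ends up incident to a genuinely different number of edges than its neighbours. Controlling this in both parts at once, rather than trading a conflict in $G_1$ for a fresh one in $G_2$, is where a complete argument would need the most care, and is the step I would attack first on small dense instances (complete and complete multipartite graphs) in order to calibrate the general reduction.
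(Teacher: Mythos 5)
This statement is an open conjecture, not a theorem of the paper: the $(2,2)$-Conjecture is quoted from Baudon, Bensmail, Davot, Hocquard et al.\ purely as motivation for Problem~\ref{main_problem}, and the paper offers no proof of it (it only records that it has been confirmed for special classes such as bipartite, subcubic, $2$-degenerate and complete graphs, and graphs of minimum degree at least $10^6$). So a correct complete argument here would be a major new result, and your proposal is not one: it is a reformulation followed by a plan. The reformulation itself is fine but already in the paper --- restating the multigraph condition $G_i \subseteq M_i \subseteq {}^2G_i$ as a neighbour-sum-distinguishing edge-weighting of $G_i$ with weights in $\{1,2\}$ is exactly the $(p,q)$-coloring formulation given as Conjecture~\ref{2,2}. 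Your obstruction analysis is also correct ($K_2$ components are fatal; a $K_3$ component is fatal since the three sums would have to be $2,3,4$ with odd total $9$, while the total of all vertex sums is twice the total edge weight), but this too is known and is precisely why the conjecture excludes graphs of order at most three.

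The genuine gap is the inductive step, which is where the entire content of the conjecture lives and which you leave as a statement of intent. ``Delete a well-chosen edge or low-degree vertex, apply induction, and repair any newly created conflict using the slack in $[\deg_{G_i}(v), 2\deg_{G_i}(v)]$'' does not specify how a repair is performed, and local repair is exactly what fails: doubling an edge to fix the conflict at one endpoint shifts the weighted degree of the other endpoint and can create a new conflict there, and reassigning an edge between $G_1$ and $G_2$ changes degrees in both parts at once. This cascading behaviour is why every known partial result requires heavy, class-specific machinery (e.g.\ Przybyło's proof for minimum degree at least $10^6$ uses probabilistic arguments, and even the complete-graph case needs explicit constructions); no simple edge-deletion induction is known to close it, and you yourself identify the dense, near-regular case as unresolved. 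As written, the proposal correctly frames the problem but proves nothing beyond what is already implicit in the statement.
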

Conjecture~\ref{2,2n} can be also formulated in the language of $(p, q)$-colorings (decompositions of graphs into at most $p$ subgraphs such that, in each of these subgraphs, the neighboring vertices can be distinguished by sums using at most $q$ colors): 
\begin{cnj}[$(2, 2)$-Conjecture~\cite{Baudon Bensmail Davot Hocquard}]
\label{2,2}
    Every connected graph of order at least four has a $(2,2)$-coloring.
\end{cnj}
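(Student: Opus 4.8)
The plan is to first pin down the dictionary between the two formulations, since this is exactly what turns the doubling picture of Conjecture~\ref{2,2n} into something tractable. Given an edge-decomposition $E(G)=E(G_1)\cup E(G_2)$ and a multigraph $M_i$ with $G_i\subseteq M_i\subseteq {}^2G_i$, I would record for each $e\in E(G_i)$ its multiplicity $m_i(e)\in\{1,2\}$; since $M_i$ then has the same underlying simple graph as $G_i$, we have $\deg_{M_i}(v)=\sum_{v\in e}m_i(e)=\sigma(v)$, the weighted degree of the edge coloring $m_i$ with colors $\{1,2\}$. Hence $M_i$ is locally irregular if and only if $\sigma(u)\neq\sigma(v)$ for every edge $uv\in E(G_i)$, i.e. $m_i$ is neighbor-sum-distinguishing on $G_i$. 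This shows Conjecture~\ref{2,2n} and Conjecture~\ref{2,2} describe the same object, and I would run the whole argument in the weighting language, keeping the two degrees of freedom --- which part each edge joins, and whether its weight is $1$ or $2$ --- visible throughout.

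The easy regime is handled first. If $\mathrm{lir}(G)\le 2$, the two classes $G_1,G_2$ of a locally irregular $2$-edge-coloring are already locally irregular, so $M_i=G_i$ with all weights $1$ gives a $(2,2)$-coloring. Thus the conjecture is only open on graphs that need three colors or are locally irregular uncolorable (the members of $\mathfrak{T}'$). The role of doubling is precisely to supply the slack these graphs lack, so the substance of the proof must deploy the weight freedom exactly at the vertices where a plain locally irregular $2$-coloring breaks down.

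For the construction I would reduce to $2$-edge-connected graphs, attaching bridges and pendant trees to an already-treated block and repairing the few new conflicts locally by raising an incident weight from $1$ to $2$ (which shifts one weighted degree by $+1$). A $2$-edge-connected block I would attack through a global parity-controlling structure --- an ear decomposition or an Eulerian orientation --- using it to place edges into $G_1,G_2$ so that every edge lies in a part where at least one endpoint still admits a free weight change, and then sweeping through the vertices with a discharging or alternating-augmentation pass to make adjacent weighted degrees distinct in each part. Here the techniques behind Conjecture~\ref{main} for $2$-multigraphs, where all edges are doubled and then $2$-colored, are a natural source of reusable local moves.

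The main obstacle, I expect, is the one responsible both for the bow-tie exception to Conjecture~\ref{graph3} and for the uncolorability of $\mathfrak{T}'$: short odd substructures --- triangles and odd paths or cycles --- where the degree constraints are rigid and a single weight change propagates along the whole substructure. The genuinely new difficulty relative to ordinary locally irregular coloring is that the two choices are coupled: a tie that cannot be broken inside $G_1$ may be removable by moving an edge into $G_2$ or retuning a weight there, and turning these ad hoc trade-offs on small configurations into a uniform, globally consistent rule is where the real work lies.
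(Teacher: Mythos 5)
The statement you were asked to prove is not proved in the paper at all: it is stated as a \emph{conjecture}, attributed to Baudon, Bensmail, Davot, Hocquard et al., and to date it has only been confirmed for special classes (locally irregular uncolorable graphs, complete graphs, bipartite graphs, $2$-degenerate graphs, subcubic graphs, and graphs of minimum degree at least $10^6$). So there is no paper proof to compare against, and any complete proof you produced would be a substantial new result --- which your proposal is not.

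The genuine gap is that your text contains no proof beyond two easy observations. Your first paragraph correctly records the equivalence between the multigraph formulation (Conjecture~\ref{2,2n}) and the $(2,2)$-coloring formulation (Conjecture~\ref{2,2}): a multigraph $M_i$ with $G_i \subseteq M_i \subseteq {}^2G_i$ is the same data as a weighting $m_i \colon E(G_i) \to \{1,2\}$, and local irregularity of $M_i$ is exactly the neighbor-sum-distinguishing property of $m_i$. This is correct but is precisely the reformulation the paper itself already states. Your second paragraph (if $\mathrm{lir}(G) \le 2$, take the two color classes with all weights $1$) is also correct but disposes only of the easy regime. Everything after that --- reducing to $2$-edge-connected blocks, ear decompositions or Eulerian orientations, a ``discharging or alternating-augmentation pass,'' local repairs when attaching bridges and pendant trees --- is a list of candidate techniques, not an argument. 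No lemma is stated, no invariant is defined, nothing is shown to terminate or to avoid the propagation problem you yourself identify in the last paragraph (rigid short odd substructures such as triangles, odd paths and cycles, and the graphs of $\mathfrak{T}'$). In particular, the graphs where the conjecture is genuinely hard are exactly the ones your sketch defers, so the proposal never engages the actual difficulty; it would need to be developed into a concrete construction with verified case analysis before it could be assessed as a proof.
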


The (2, 2)-Conjecture was confirmed for some classes of connected graphs of order at least four, such as locally irregular uncolorable graphs \cite{Baudon Bensmail Davot Hocquard}, complete graphs \cite{Baudon Bensmail Davot Hocquard}, bipartite graphs \cite{Baudon Bensmail Davot Hocquard}, 2-degenerate graphs \cite{Baudon Bensmail Davot Hocquard}, subcubic graphs \cite{Baudon Bensmail Davot Hocquard} and graphs of minimum degree at least $10^6$ \cite{Przybylo2}.

Motivated by the above-mentioned problems, we define a new one, closely related to previously-mentioned conjectures.
\begin{problem}
\label{main_problem}
Let $G$ be a connected graph that is not isomorphic to $K_2$ or $K_3$. 
What is the minimum number of edges of $G$ whose doubling yields a multigraph which is locally irregular edge colorable using at most two colors with no multiedges colored with two colors?
\end{problem}

In this paper, we deal only with Problem~\ref{main_problem}, but a slightly different problem may be also stated. We present it and give some notes on it in the last section.

If $G$ is a graph and $E_d$ is a subset of its edges, by $G + E_d$ we denote the multigraph obtained from $G$ by replacing each edge from $E_d$ with two parallel edges.

In most cases, we are interested in the construction of a coloring that uses at most two colors.
Hence, for convenience, we use red and blue to represent those two colors. 
The number of edges incident to a vertex $v$ that are colored red in a coloring of a (multi)graph $G$ is called the red degree of $v$ and denoted by $\deg_R(v)$.
We use the analogous definition for the blue degree $\deg_B(v)$ of $v$.
The notion of $\deg_R(v)$ and $\deg_B(v)$ reflects the degree in a classical sense of $v$ in the red subgraph $R$ of $G$ and the blue subgraph $B$ of $G$.

By $\mathcal{D}_{\mathrm{lir}}(G)$ we denote the minimum number of doubled edges required in the multigraph $M$ obtained from a graph $G$ such that $M$ has a locally irregular edge coloring with at most two colors and without two-colored multiedges. 
Note that Problem~\ref{main_problem} has a solution for a connected graph $G$  which is not isomorphic to $K_2$ or $K_3$ if and only if $(2, 2)$-Conjecture holds for $G$. 
Moreover, if a graph $G$ satisfies $\mathrm{lir}(G)\leq 2$ then $\mathcal{D}_{\mathrm{lir}}(G)=0$. 
In this paper, we will show a solution to Problem \ref{main_problem} for paths, cycles, trees, complete graphs, split graphs, and special cacti similar to those in $\mathfrak{T}$. 
On top of that, we show that $\mathrm{lir}(G) = 2$ whenever $G$ is a complete $k$-partite graph or a power of a cycle different from a complete graph, which yields $\mathcal{D}_\mathrm{lir}(G) = 0$ in these cases.  

\section{Paths, cycles, trees and special cacti}

In~\cite{Baudon Bensmail Przybylo Wozniak} it was shown that the path on $n$ vertices $P_n$ admits a locally irregular coloring if and only if $n$ is odd, and that two colors are enough in those cases.
The situation is similar for cycles, where odd cycles do not admit locally irregular colorings with any number of colors, while cycles of length divisible by four admit a 2-liec, and those of length congruent to two modulo four admit a 3-liec (but no 2-liec).
The deciding factor for locally irregular edge colorability in the case of paths and cycles is, not surprisingly, whether they can be partitioned into disjoint paths of length two, as $P_3$ is the only instance of a locally irregular graph among cycles and paths of positive length.

Hence, to solve the problem of determining $\mathcal{D}_\mathrm{lir}(G)$ in the case when $G$ is a path or a cycle, it is enough to consider paths of odd length, and cycles of length not divisible by four, as $\mathcal{D}_\mathrm{lir}(G) = 0$ is implied by $\mathrm{lir}(G) \leq 2$ in the remaining cases.
As was mentioned, in the class of simple graphs, among paths and cycles of positive length, there is only one instance of locally irregular graph, namely $P_3$.
When considering multigraphs (due to the possibility of doubling some edges to obtain a 2-locally irregular colorable multigraph), the following observation is useful (in the case of $P_4$ double a pendant edge, in the case of $P_5$ and $P_7$ double both central edges, and in the case of $P_6$ double the central edge and an edge adjacent to it):
\begin{observation}\label{obs_mochromatic_paths}
    It is necessary to double an edge of $P_4$ to obtain a locally irregular multigraph, and 
    it is necessary to double two edges of $P_k$, $k \in \{5,6,7\}$ to obtain a locally irregular multigraph.
\end{observation}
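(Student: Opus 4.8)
The plan is to establish both assertions as \emph{lower} bounds on the number of doublings, since the matching upper bounds are already furnished by the explicit constructions named in the sentence preceding the statement. The one structural fact I would rely on is that doubling an edge $e=v_iv_{i+1}$ raises the degree of each of its two endpoints by exactly one and leaves every other degree unchanged, combined with the observation that in $P_k$ each internal vertex has degree $2$ while the two endvertices have degree $1$. With this in hand, the claim for $P_4$ is immediate: writing its vertices as $v_1,v_2,v_3,v_4$, the central edge $v_2v_3$ joins two vertices of degree $2$, so $P_4$ is not locally irregular and at least one doubling is forced.

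For $P_k$ with $k\in\{5,6,7\}$ I would argue by contradiction, assuming that doubling a single edge $e$ yields a locally irregular multigraph and splitting into two cases according to the position of $e$. If $e$ is an internal edge, both of its endpoints start at degree $2$ and hence both end at degree $3$; since the two parallel copies of $e$ then join two vertices of equal degree, local irregularity fails. If instead $e$ is one of the two pendant edges, say $e=v_1v_2$, then only $v_2$ changes degree (to $3$), so the vertices $v_3,v_4,\dots,v_{k-1}$ all keep degree $2$; because $k\ge 5$, the edge $v_3v_4$ still joins two degree-$2$ vertices, again contradicting local irregularity. In either case a single doubling is insufficient, so at least two doublings are needed.

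There is no genuine obstacle here, as the argument reduces to a short finite case check; the only point that demands care is the bookkeeping of which edges count as pendant versus internal, together with the remark that $k\ge 5$ is exactly what guarantees a surviving degree-$2$ adjacency after a pendant edge is doubled. To close the loop I would finally verify that the doublings named in the preamble (one pendant edge for $P_4$; the two central edges for $P_5$ and $P_7$; the central edge together with one adjacent edge for $P_6$) do produce locally irregular multigraphs, which is a direct computation of the resulting degree sequences.
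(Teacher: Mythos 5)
Your proof is correct. The paper in fact states this observation without any proof at all --- the sentence preceding it only supplies the matching upper-bound constructions --- so there is no argument of the paper's to compare against; your degree-counting case analysis (internal edge vs.\ pendant edge) is the natural way to supply the missing necessity argument, and it works. Two small points to tidy up. First, in the pendant-edge case you write that ``only $v_2$ changes degree,'' which contradicts the structural fact you yourself invoked: doubling $v_1v_2$ also raises $\deg(v_1)$ from $1$ to $2$. The slip is harmless, since your contradiction rests only on $v_3$ and $v_4$ retaining degree $2$ (neither being an endpoint of the doubled edge), but the sentence should be corrected. Second, ``it is necessary to double two edges'' means that zero doublings fail as well as one; the zero-doubling case is trivial (every internal edge of $P_k$, $k \geq 4$, joins two degree-$2$ vertices, which is exactly your $P_4$ argument), but for completeness it deserves a clause.
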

Since $P_3$ is locally irregular itself, and doubling of the edge $P_2$ results in a regular multigraph, we get the following:
\begin{tw}\label{paths}
    If $n \geq 3$ then 
    \begin{align*}
        \mathcal{D}_{\mathrm{lir}}(P_n) = \left\{\begin{matrix}
         0 & \text{if } n \equiv 1 \pmod{2},\\
         1 &\text{if } n \equiv 0 \pmod{2}.
    \end{matrix}\right.
    \end{align*}
\end{tw}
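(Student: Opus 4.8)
The plan is to treat the two parities of $n$ separately. For odd $n$ the statement is essentially immediate: the discussion preceding the theorem records that $P_n$ with $n$ odd admits a locally irregular $2$-edge-coloring, i.e. $\mathrm{lir}(P_n)\le 2$, and since any graph $G$ with $\mathrm{lir}(G)\le 2$ satisfies $\mathcal{D}_{\mathrm{lir}}(G)=0$, we obtain $\mathcal{D}_{\mathrm{lir}}(P_n)=0$ with no edge doubled. So the whole content of the theorem lies in the even case.

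For even $n$ I would first prove the lower bound $\mathcal{D}_{\mathrm{lir}}(P_n)\ge 1$. Here $P_n$ has $n-1$ edges, an odd number, so it is an odd-length path and therefore belongs to $\mathfrak{T}'$; by the characterization of Baudon, Bensmail, Przybyło and Woźniak it admits no locally irregular edge coloring with any number of colors, in particular no admissible $2$-coloring. Hence doubling zero edges cannot solve Problem~\ref{main_problem} for $P_n$, which forces $\mathcal{D}_{\mathrm{lir}}(P_n)\ge 1$.

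The substance is the matching upper bound $\mathcal{D}_{\mathrm{lir}}(P_n)\le 1$ for even $n\ge 4$. Write $P_n=v_1v_2\cdots v_n$ with $e_i=v_iv_{i+1}$. I would double the pendant edge $e_1$ and color as follows: place both copies of $e_1$ together with $e_2$ and $e_3$ in red, which yields a component on $v_1,v_2,v_3,v_4$ with red degrees $2,3,2,1$, hence locally irregular; then take the remaining path $v_4v_5\cdots v_n$, whose $n-4$ edges (an even number, since $n$ is even) split into the $(n-4)/2$ consecutive length-two blocks $\{e_4,e_5\},\{e_6,e_7\},\dots,\{e_{n-2},e_{n-1}\}$, and color these blocks alternately blue, red, blue, $\dots$, \emph{starting with blue}. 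Each such block is a path of length two and so is locally irregular in its color. For $n=4$ there are no blocks and the red component is already all of $P_4+\{e_1\}$.

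The only genuinely delicate point — and what I expect to be the main obstacle — is the verification that the two color classes decompose into \emph{vertex-disjoint} locally irregular components, which hinges entirely on the interface at $v_4$. Because the block coloring starts with blue, the edge $e_4$ is blue, so $v_4$ has red degree $1$ and the red special component is not extended into the path; and because the blocks alternate in color, no two monochromatic length-two paths share a vertex. Consequently each color class is a disjoint union of locally irregular pieces and is therefore locally irregular. Finally, both copies of the unique doubled edge $e_1$ are red, so no two-colored multiedge occurs, and the coloring is admissible. This exhibits a solution using a single doubled edge, giving $\mathcal{D}_{\mathrm{lir}}(P_n)\le 1$; combined with the lower bound, this yields equality and completes the even case.
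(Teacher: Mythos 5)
Your proposal is correct and follows essentially the same route as the paper: decompose $P_n$ (for even $n$) into one $P_4$ with a doubled pendant edge plus $\tfrac{n-4}{2}$ paths of length two, and alternate colors so that paths sharing a vertex get distinct colors. The only difference is that you spell out the lower bound $\mathcal{D}_{\mathrm{lir}}(P_n)\ge 1$ via the uncolorability of odd-length paths and verify the interface at $v_4$ explicitly, both of which the paper leaves implicit.
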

\begin{proof}
    $\mathcal{D}_\mathrm{lir}(P_n)=0$ is implied by $\mathrm{lir}(P_n)=2$ in the case of odd $n$.
    If $n$ is even, partition $P_n$ into edge-disjoint $\frac{n-4}{2}$ paths of length two and a path of length three, and use a doubling on $P_4$.
    Assign colors to the paths in the partition in such a way that two paths sharing a vertex receive distinct colors and two vertex-disjoint paths are colored the same.
\end{proof}

Observation~\ref{obs_mochromatic_paths} and the fact that $P_3$ is locally irregular can be also used to determine the value of $\mathcal{D}_\mathrm{lir}(C_n)$.
\begin{tw}\label{cycles}
    If $C_n$ is a cycle of length $n \geq 4$ then
    \begin{align*}
        \mathcal{D}_{\mathrm{lir}}(C_n) = \left\{\begin{matrix}
             0 & \text{if } n \equiv 0 \pmod{4},\\
             1 &\text{if } n \equiv 1 \pmod{4},\\
             2 &\text{otherwise.}
        \end{matrix}\right.
    \end{align*}
\end{tw}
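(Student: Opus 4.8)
The plan is to recast any valid two-coloring as a partition of the edges of $C_n$ into monochromatic arcs and to read off the number of doubled edges from the lengths of these arcs. Since every doubled edge must be monochromatic, each underlying edge of the cycle receives a single color, so the red and the blue edges partition $E(C_n)$ into maximal monochromatic arcs. Two consecutive arcs carry different colors, hence the number of arcs is even; in particular there are at least two of them, since a single arc would be the whole (regular, thus non-locally-irregular) cycle. Each arc is a subpath of $C_n$ with some of its edges doubled, and local irregularity of each color class is equivalent to local irregularity of each such arc. Recall that a simple path is locally irregular only when it is $P_3$, so an arc carrying no doubled edge must have length $2$; by Observation~\ref{obs_mochromatic_paths} an arc of length $3$ (a $P_4$) becomes locally irregular after doubling one pendant edge, and arcs of lengths $4,5,6$ (paths $P_5,P_6,P_7$) require two doubled edges. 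This reduces the problem to minimizing the total number of doubled edges over all partitions of the $n$-cycle into an even number of arcs of length at least two.

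For the upper bounds I would exhibit explicit partitions. When $n\equiv 0\pmod 4$ the bound $\mathcal{D}_{\mathrm{lir}}(C_n)=0$ is just $\mathrm{lir}(C_n)=2$. When $n\equiv 1\pmod 4$, write $n=2s+1$ with $s$ even and take $s-1$ arcs of length $2$ together with one arc of length $3$; doubling a single pendant edge of the length-$3$ arc gives $\mathcal{D}_{\mathrm{lir}}(C_n)\le 1$. When $n\equiv 2\pmod 4$, take two arcs of length $3$ (or one arc of length $4$) and the rest of length $2$, using two doubled edges. When $n\equiv 3\pmod 4$, take one arc of length $5$ (a $P_6$, handled by Observation~\ref{obs_mochromatic_paths} with two doubled edges) and the rest of length $2$. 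In each case the number of arcs is even, so the two colors alternate around the cycle and every arc is genuinely a maximal monochromatic component; this yields $\mathcal{D}_{\mathrm{lir}}(C_n)\le 2$ in the last two cases.

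For the lower bounds, first note $\mathcal{D}_{\mathrm{lir}}(C_n)\ge 1$ whenever $4\nmid n$: odd cycles are not locally irregularly colorable at all, and cycles with $n\equiv 2\pmod 4$ satisfy $\mathrm{lir}(C_n)=3>2$. It remains to rule out $\mathcal{D}_{\mathrm{lir}}(C_n)=1$ when $n\equiv 2,3\pmod 4$. Assume exactly one edge is doubled; then every arc except one is a doubling-free path, hence a $P_3$ of length $2$, while the remaining arc is a locally irregular path carrying exactly one doubled edge. A direct check on degree sequences shows that such an arc must have length $2$ or $3$: a single doubling cannot make $P_5$ or any longer path locally irregular, as it always leaves two adjacent degree-$2$ vertices. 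Writing the total length as $n=\ell^\ast+2(s-1)$ with $s$ even and $\ell^\ast\in\{2,3\}$ then forces $n\equiv 0\pmod 4$ (if $\ell^\ast=2$) or $n\equiv 1\pmod 4$ (if $\ell^\ast=3$), contradicting $n\equiv 2,3\pmod 4$. Hence $\mathcal{D}_{\mathrm{lir}}(C_n)\ge 2$ in these cases, matching the constructions.

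The main obstacle is the lower-bound step: one must argue not merely about minimum-cost arcs but about all locally irregular arcs carrying at most one doubled edge, and classify them by length. The short classification (length at most $3$) together with the evenness of the number of arcs is exactly what pins down the admissible residues of $n$ modulo $4$ and excludes the value $1$ for $n\equiv 2,3\pmod 4$.
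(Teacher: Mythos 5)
Your proposal is correct and follows essentially the same route as the paper: both decompose the cycle into monochromatic arcs (paths of length two plus one special arc carrying the doublings), color the arcs alternately for the upper bounds, and get the lower bounds from the non-colorability of odd cycles, from $\mathrm{lir}(C_n)=3$ when $n\equiv 2\pmod 4$, and from the fact that one doubling cannot repair a longer path. If anything, your mod-$4$ parity argument is \emph{more} complete than the paper's lower bound, which merely cites Observation~\ref{obs_mochromatic_paths} (covering only $P_5,P_6,P_7$), whereas your classification of all arcs carrying at most one doubled edge (length $2$ or $3$) also handles arbitrarily long arcs. Two small justifications in your write-up should be repaired, though neither affects the conclusion: (i) a single doubling on $P_k$, $k\geq 5$, does not always leave two adjacent degree-$2$ vertices --- doubling an \emph{interior} edge instead creates two adjacent degree-$3$ vertices, so the case analysis has two branches (pendant doubling: adjacent degree-$2$ conflict; interior doubling: adjacent degree-$3$ conflict); (ii) in the one-doubling lower bound, the single-arc configuration (the whole cycle monochromatic with one doubled edge) is not regular, so your ``regular, thus non-locally-irregular'' reason does not apply to it; it is nevertheless excluded because the two endpoints of the doubled edge are adjacent and both have degree $3$.
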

\begin{proof}
    If $n \equiv 0 \pmod{4}$ then $\mathrm{lir}(C_n) = 2$, and consequently $\mathcal{D}_\mathrm{lir}(C_n) = 0$. 
    
    If $n \equiv 1 \pmod{4}$ then we can partition $C_n$ into edge-disjoint path $P_4$ and $\frac{n-3}{2}$ paths $P_3$. 
    We double a pendant edge on $P_4$ and color this path red. 
    The remaining paths we color alternately blue and red (each path in one color). 
    Therefore, in this case $\mathcal{D}_\mathrm{lir}(C_n) = 1$, because the cycle has odd length and one doubling is necessary.

    If $n \equiv 2 \pmod{4}$ then we can partition $C_n$ into edge-disjoint path $P_5$ and $\frac{n-4}{2}$ paths $P_3$.
    We double first two edges on path $P_5$ and color this path red. 
    The remaining paths we color alternately blue and red. 
    Therefore, in this case $\mathcal{D}_\mathrm{lir}(C_n) = 2$, because $\mathrm{lir}(C_n) = 3$ and one doubling is not enough from Observation~\ref{obs_mochromatic_paths}.

    If $n \equiv 3 \pmod{4}$ then we can partition $C_n$ into edge-disjoint path $P_6$ and $\frac{n-5}{2}$ paths $P_2$. 
    We double the second and the third edge on $P_6$ and color this path red. 
    The remaining paths we color alternately blue and red. 
    Therefore, in this case $\mathcal{D}_\mathrm{lir}(C_n) = 2$, because the cycle has odd length and one doubling is not enough from Observation~\ref{obs_mochromatic_paths}.
\end{proof}

In~\cite{Baudon Bensmail}, authors proved that there is a linear-time algorithm that decides if $\mathrm{lir}(T) \leq 2$ for a tree $T$; this complemented the previously known results that every tree $T$ different from an odd-length path has $\mathrm{lir}(T) \leq 3$ (see~\cite{Baudon Bensmail Przybylo Wozniak}) and that $\mathrm{lir}(T) \leq 2$ if the maximum degree of a tree $T$ is at least five (see~\cite{Baudon Bensmail Sopena}).
As a byproduct of proving the linearity of determining the existence of 2-liec in the case of trees, Baudon, Bensmail and Sopena proved that every shrub admits a 2-aliec (almost 2-locally irregular edge coloring, i.e., a coloring in which every connected monochromatic component is locally irregular except at most one, which is then the single root edge):
\begin{tw}[Baudon, Bensmail, Sopena~\cite{Baudon Bensmail}]\label{aliec_trees}
    Every shrub admits a $2$-aliec.
\end{tw}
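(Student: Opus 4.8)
The plan is to prove the statement by induction on the number of edges of the shrub, exploiting the recursive structure at the neighbour of the root. Recall that here a shrub is a tree with a distinguished leaf $r$ (the root) whose unique incident edge $rv$ is the root edge, and that a $2$-aliec is a $2$-edge-coloring in which every monochromatic component is locally irregular, with the sole possible exception of the root edge standing alone in its color. The base case is the single edge $rv$: color it arbitrarily; as a single edge it is not locally irregular, but it is exactly the root edge, so this is the one permitted exception.

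For the inductive step, let $v$ be the neighbour of $r$ and let $u_1,\dots,u_d$ be its other neighbours. Deleting $v$ splits the remainder of the tree into subtrees $T_1,\dots,T_d$, and for each $i$ the subtree $T_i$ together with the edge $vu_i$ is a smaller shrub $S_i$ rooted at $v$ with root edge $vu_i$. By induction each $S_i$ admits a $2$-aliec. The key structural observation is that when these colorings are superimposed on the whole shrub, the $S_i$ pairwise share only the vertex $v$, and the root edge $rv$ is the one genuinely new edge; consequently every potential violation of local irregularity, and every previously-exceptional subshrub edge that must now be repaired, occurs on an edge incident to $v$. Thus the whole problem reduces to a single local assignment at $v$, since all internal components of each $S_i$ remain locally irregular and merging happens only through $v$.

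To set up this local problem I record, for each $i$, the quantity $t_i:=\deg_{c}^{S_i}(u_i)\ge 1$, where $c$ is the color of $vu_i$ in the chosen coloring of $S_i$; note that $t_i=1$ holds exactly when $vu_i$ was the exceptional (isolated) edge of $S_i$. Since swapping the two colors in a single $S_i$ preserves the $2$-aliec property and leaves $t_i$ unchanged while flipping the color of $vu_i$, I may freely and independently choose the color of each edge $vu_i$ and of $rv$. Writing $\rho=\deg_R(v)$ and $\beta=\deg_B(v)$ with $\rho+\beta=d+1$, the reduction shows it suffices to pick these colors so that for every child edge $vu_i$ of color $c$ one has $\deg_c(v)\ne t_i$; the root edge itself never causes trouble, since it is locally irregular when $\deg_c(v)\ge 2$ and is the permitted exception when it is the lone edge of its color at $v$.

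The solving step is then to choose the split $(\rho,\beta)$ of the $d+1$ edges at $v$ together with an assignment realizing it: taking $\rho\ne\beta$ and routing each child edge of value $t_i=\rho$ to blue and each of value $t_i=\beta$ to red makes the condition $\deg_c(v)\ne t_i$ automatic for those edges (the remaining child edges being safe in either color), after which the free child edges and the root edge fill out the exact counts. The root edge supplies the crucial slack: whenever the counts force it to be the unique edge of its color at $v$, it may legitimately remain an isolated, exceptional component. I expect the main obstacle to be precisely this feasibility claim, namely that a split $(\rho,\beta)$ and an assignment always exist; this requires a finite but careful case analysis driven by the multiset $\{t_1,\dots,t_d\}$ and by $\deg(v)=d+1$, in particular handling the parity case $\rho=\beta$ and the values $t_i=1$ that must be repaired (which force $\deg_c(v)\ge 2$ for the color $c$ used on $vu_i$). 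An alternative that may streamline the gluing is to strengthen the induction hypothesis so that each shrub is shown to admit $2$-aliecs realizing a controlled range of root-endpoint degrees $t$, making the choice at $v$ transparent; but in either formulation the degree-collision analysis at $v$ is the technical heart.
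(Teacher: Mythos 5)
First, a point of reference: the paper you were given does not prove this statement at all --- Theorem~\ref{aliec_trees} is quoted from Baudon, Bensmail and Sopena~\cite{Baudon Bensmail} and used as a black box in the proof of the tree theorem that follows it. So there is no in-paper proof to compare against, and your proposal has to be judged on its own terms. On those terms, your reduction is sound and well executed: splitting at the neighbour $v$ of the root, applying induction to the subshrubs $S_i$ rooted at $v$ with root edges $vu_i$, observing that all colour degrees away from $v$ are unaffected by the merge, that $t_i=1$ characterises exactly those $S_i$ whose exceptional edge must be repaired, and that the colour-swap symmetry of each $S_i$ lets you choose the colours of the edges $vu_i$ and $rv$ independently. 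This correctly reduces the theorem to the following finite statement: for every $d\geq 1$ and every multiset $\{t_1,\dots,t_d\}$ of positive integers, the $d+1$ edges at $v$ can be $2$-coloured so that every child edge of colour $c$ satisfies $\deg_c(v)\neq t_i$.

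The gap is that this reduced statement is, after your reduction, the \emph{entire} content of the theorem, and you do not prove it: you explicitly defer it as the expected ``main obstacle,'' and the strategy you sketch is incomplete. Concretely, for a split $(\rho,\beta)$ with $\rho\neq\beta$ an assignment exists if and only if $n_{=\rho}\leq\beta$ and $n_{=\beta}\leq\rho$, where $n_{=x}=|\{i : t_i=x\}|$, while the balanced split (for $d$ odd) works if and only if $n_{=(d+1)/2}=0$; your routing rule does not address when such a split exists. It is not a marginal case: for $d=3$ and $(t_1,t_2,t_3)=(4,3,3)$ every unbalanced split fails and only $\rho=\beta=2$ succeeds, so the ``parity case'' you set aside is genuinely needed, and one must also rule out that \emph{all} splits fail simultaneously. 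The gap is closable by a uniform counting argument rather than a case analysis: making the split with $\beta=j$ infeasible requires at least $\min(j+1,\,d+2-j)$ children whose values $t_i$ lie in $\{j,\,d+1-j\}$, and since distinct splits involve disjoint pairs of values, defeating all of them would require at least $1+2+\dots+\left(\left\lfloor\tfrac{d+1}{2}\right\rfloor+1\right)$ children, a quantity which exceeds $d$ for every $d\geq 1$; hence some split is always feasible, and the free edges (including the root edge, which as you correctly note is never a constraint) fill out the counts. Without this or an equivalent argument your proof is incomplete; with it, your inductive scheme does yield the theorem.
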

The proof of the following theorem uses the above-mentioned results, and, in particular, deals with the remaining cases when there is no 2-liec of a tree. 

\begin{tw}
    If $T$ is a tree of order at least three then $\mathcal{D}_\mathrm{lir}(G) \leq 1$.
\end{tw}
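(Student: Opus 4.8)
The plan is to split the argument according to the value of $\mathrm{lir}(T)$. If $\mathrm{lir}(T) \le 2$, then $\mathcal{D}_{\mathrm{lir}}(T) = 0 \le 1$ and there is nothing to prove. If $T$ is a path, then, since even-length paths already satisfy $\mathrm{lir} \le 2$, only odd-length paths remain, and for these Theorem~\ref{paths} gives $\mathcal{D}_{\mathrm{lir}}(T) = 1$. Thus the only substantial case is when $T$ is not a path; then, by the result quoted from~\cite{Baudon Bensmail Przybylo Wozniak}, we have $\mathrm{lir}(T) \le 3$, and combining this with the first case we may assume $\mathrm{lir}(T) = 3$. For such a $T$ I would exhibit a single edge whose doubling produces a multigraph admitting a locally irregular $2$-edge-coloring with no two-colored multiedge.

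To produce this coloring I would start from the almost-coloring provided by Theorem~\ref{aliec_trees}. Fix a leaf $\ell$ of $T$ with neighbor $w$ and regard $T$ as a shrub rooted at $\ell$ with root edge $\ell w$; this is legitimate since $T$ has order at least three. By Theorem~\ref{aliec_trees} this shrub admits a $2$-aliec. Were this coloring a genuine $2$-liec, we would contradict $\mathrm{lir}(T) = 3$; hence its unique non-locally-irregular monochromatic component is the single root edge $\ell w$, say colored red, while every other monochromatic component (in both colors) is locally irregular. In particular $\deg_R(\ell) = \deg_R(w) = 1$ and, as $\ell w$ is an isolated red component, all remaining edges at $w$ are blue.

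The key observation is that recoloring the offending edge $\ell w$ from red to blue repairs the red class for free: the only red component touching $\ell$ or $w$ was $\ell w$ itself, so after the recoloring the red subgraph consists entirely of components that were already locally irregular. The price is paid solely in the blue class at $w$, whose blue degree rises by one from $d := \deg_B(w)$ to $d+1$ (note $d \ge 1$, else $T = K_2$), while $\ell$ becomes a blue leaf; since $d+1 \ge 2 > 1$, the edge $\ell w$ itself is fine, and the only edges that can now fail local irregularity are those joining $w$ to a blue neighbor of blue-degree exactly $d+1$. It remains to neutralize these conflicts with a single doubling. Here I would use the freedom to choose the root leaf $\ell$: whenever $T$ has a leaf whose neighbor has degree two, rooting there forces $d = 1$, so the recolored blue subgraph has at most one conflicting edge, namely $w w'$ with $w'$ the second neighbor of $w$; doubling $\ell w$ in blue then raises $\deg_B(w)$ to $3$ and leaves $\deg_B(w')$ untouched, clearing the conflict without creating a new one. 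For trees in which every leaf is attached to a vertex of degree at least three, I would instead double a blue edge $w z_0$ with $z_0$ a leaf neighbor of $w$ (so that no cascade arises beyond $z_0$), after verifying by a short case distinction on the blue degrees in the neighborhood of $w$ that the induced shift of $\deg_B(w)$ resolves all conflicts simultaneously.

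The main obstacle is precisely this last step: a single doubling alters the color-degree of only two vertices, whereas bumping $\deg_B(w)$ may a priori clash with several neighbors at once. The crux of the proof is therefore to confine all resulting conflicts to one edge by a judicious choice of the root leaf $\ell$ and of the doubled edge, exploiting that $T$ is not a path (hence contains a vertex of degree at least three) together with the flexibility in the $2$-aliec of Theorem~\ref{aliec_trees}; the degree of $w$ and the presence of a leaf in its neighborhood are what drive the case analysis.
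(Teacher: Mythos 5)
Your overall route is the same as the paper's: reduce to $\mathrm{lir}(T)=3$, root $T$ at a leaf $\ell$ with neighbor $w$, take the $2$-aliec of Theorem~\ref{aliec_trees} whose exceptional component is the red root edge $\ell w$ (so all other edges at $w$ are blue), recolor $\ell w$ blue, and repair the damage with a single doubling. The gap is in the repair step, and it is genuine. Write $d+1=\deg_T(w)$, so after the recoloring $\deg_B(w)=d+1$, and let $S_1$, $S_2$ denote the sets of neighbors of $w$ whose blue degree is $d+1$, respectively $d+2$; note $S_1\neq\emptyset$, since otherwise the recolored coloring is already a $2$-liec, contradicting $\mathrm{lir}(T)=3$. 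Because $S_1$ may contain more than one vertex, the only way one doubling can clear all conflicts on the edges $wv$, $v\in S_1$, is to raise $\deg_B(w)$ to $d+2$ --- but then every vertex of $S_2$ becomes conflicting. Both of your proposed moves, doubling $\ell w$ and doubling $wz_0$ for a leaf neighbor $z_0$ of $w$, have exactly the same effect on $w$ (each raises $\deg_B(w)$ by one, and the leaf endpoint is harmless), so neither resolves the case $S_2\neq\emptyset$. Moreover, in your second case the vertex $z_0$ need not exist: ``every leaf of $T$ is attached to a vertex of degree at least three'' does not provide $w$ with a leaf neighbor other than $\ell$.

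The missing ingredient, which the paper invokes as its very first step, is Theorem~3 of~\cite{Baudon Bensmail Sopena}: every tree of maximum degree at least five admits a $2$-liec, hence your tree satisfies $\Delta(T)\leq 4$. This is precisely what makes one doubling sufficient. It bounds $d\leq 3$; it forces $S_2=\emptyset$ when $d\in\{1,3\}$ and $|S_2|\leq 1$ when $d=2$ (since $S_1\neq\emptyset$ occupies one of the two non-root neighbors); and in the single problematic configuration $d=2$, $S_2=\{v_2\}$ with $\deg_B(v_2)=4$, the correct move is to double the edge $wv_2$ toward the conflicting neighbor rather than an edge toward a leaf: then $\deg_B(v_2)=5>\Delta(T)$ exceeds every blue degree in its neighborhood, $\deg_B(w)=4$ clashes with none of $\ell$ (blue degree $1$), $v_1$ (blue degree $3$), $v_2$ (blue degree $5$), and one obtains a $2$-liec of $T+wv_2$. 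Without the bound $\Delta(T)\leq 4$ (or a substitute for it) your ``short case distinction on the blue degrees in the neighborhood of $w$'' cannot close, because the blue degrees of the neighbors of $w$ are a priori unbounded; and even with it, your choice of which edge to double must be corrected as above.
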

\begin{proof}
    Let $T$ be a tree of order at least three without a 2-liec, as otherwise $\mathcal{D}_\mathrm{lir}(T) = 0$.
    We may suppose that $T$ is a shrub; take a pendant vertex $x$ of $T$ as a root.
    Then $\Delta(T) \leq 4$ (see Theorem~3 from~\cite{Baudon Bensmail Sopena}).
    Let $u$ be a neighbor of $x$ in $T$.
    From Theorem~\ref{aliec_trees} we have that there is a 2-aliec of $T$, in which $ux$ is colored red, and all other edges incident to $u$ are blue.
    We will distinguish several cases depending on $\deg_T(u)$.

    \textbf{Case 1.} Let $\deg_T(u) = 2$.
    Denote by $v$ the neighbor of $u$ different from $x$.
    Clearly, $\deg_B(v) = 2$ in this case, as otherwise $ux$ could be recolored blue and the resulting coloring would be a 2-liec.
    However, if $\deg_B(v) = 2$, we can recolor $ux$ blue, and double it, and the resulting coloring would be a 2-liec of $T + ux$.

    \textbf{Case 2.} Let $\deg_T(u) = 3$.
    Denote by $v_1$ and $v_2$ the neighbors of $u$ different from $x$.
    Similarly to Case 1, at least one of the neighbors of $u$, without loss of generality $v_1$, has the blue degree three, i.e. $\deg_B(v_1) = 3$, as otherwise we could recolor $ux$ blue and obtain a 2-liec of $T$.
    If $\deg_B(v_2) \neq 4$, recoloring of $ux$ blue and doubling it yields a 2-liec of $T+ux$.
    If $\deg_B(v_2) = 4$ then recolor $ux$ blue, but double the edge $uv_2$. 
    In this case, $\deg_B(x) = 1$, $\deg_B(u) = 4$, $\deg_B(v_1) = 3$, $\deg_B(v_1) = 3$, and the blue degree of every other vertex is at most four, since $\Delta(T) \leq 4$. Hence, the resulting coloring is a 2-liec of $T+uv_2$. 

    \textbf{Case 3.} Let $\deg_B(u) = 4$. In this case, recolor $ux$ blue and double it. The resulting coloring is a 2-liec of $T+ux$, since the blue degree of $u$ is five, and the blue degree of every other vertex is bounded by $\Delta(T)=4$.
\end{proof}

Theorem~\ref{paths} and Theorem~\ref{cycles} describe the value of $\mathcal{D}_\mathrm{lir}(G)$ in the case when $G$ is a path or a cycle. 
To fully cover all locally irregular uncolorable graphs, the graphs from the recursively defined class $\mathfrak{T}$ remain.
We show that for a graph  $G$ from $\mathfrak{T},$  there is a set of edges that need to be doubled so the resulting multigraph has a 2-liec in which parallel edges are colored the same, and
we also prove that the size of such a set is upper-bounded by the number of cycles of $G$.
However, the approach presented to prove this can be generalized for all cactus graphs defined in a similar way to those in $\mathfrak{T}$.

Let $\mathfrak{T}^*$ be the family of graphs defined recursively in the following way:
\begin{itemize}
  \item the cycle $C_n$ belongs to $\mathfrak{T}^*$,
  \item if $G$ is a graph from $\mathfrak{T}^*$, then any graph $G'$ obtained from $G$ by identifying a vertex $v\in V(G)$ of degree two, which belongs to a cycle in $G$, with an end vertex of a path of positive length or with an end vertex of a path such that the other end vertex of that path is identified with a vertex of a new cycle belongs to $\mathfrak{T}^*$.
\end{itemize}

As $\mathfrak{T}^*$ contains cycles, besides other cacti, and some cycles need two doublings, we exclude them from the following theorem (for $\mathcal{D}_\mathrm{lir}(C_n)$ see Theorem~\ref{cycles}).

\begin{tw}\label{special_cacti}
    Let $G \in \mathfrak{T}^*$ be different from a cycle.
    If $k$ is the number of cycles of $G$, then there is an independent set $E_d$ of at most $k$ non-pendant edges such that $G + E_d$ have a $2$-liec.
\end{tw}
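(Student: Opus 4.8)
The plan is to argue by induction on the number $k$ of cycles of $G$, exploiting the rigidity forced by the definition of $\mathfrak{T}^*$. First I would record the structural facts that make the induction feasible: since every attachment is made at a degree-two vertex that \emph{lies on a cycle}, no attachment can ever be made along a path (internal path vertices have degree two but belong to no cycle), so $G$ is a subcubic cactus in which all paths hang directly off the cycles; every vertex created as a gluing point has degree exactly three, and consequently every cycle of a non-cyclic $G\in\mathfrak{T}^*$ carries at least one vertex of degree three (for the initial cycle this is its first attachment, and every later cycle meets its connecting path in such a vertex). This degree-three vertex is the key resource: it lets a color class reach degree $3$ at one cycle vertex, which is exactly what breaks the parity obstruction that forces two doublings on a bare cycle in Theorem~\ref{cycles}.

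The coloring itself I would build from the only locally irregular building block among short paths, the monochromatic $P_3$, whose center has color-degree two and whose ends have color-degree one. Around each cycle I would lay the alternating pattern of length-two monochromatic runs ($RRBB\,RRBB\dots$), so that each color class restricted to the cycle is a disjoint union of $P_3$'s; this is locally irregular precisely when the cycle length is divisible by four, and for the other residues I would spend the one doubling budgeted to that cycle to repair the single defective run, placing the doubled edge in the interior of the cycle (hence never pendant) and away from the cut vertices it shares with neighboring cycles, so that the doublings chosen for distinct cycles stay vertex-disjoint. The pendant and connecting paths are then colored by the same length-two pattern, with the starting color dictated by the junction, and they must be absorbed with \emph{no} further doublings, which forces me to reconcile the parity of each path against the coloring of the cycle it meets.

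For the induction I would strengthen the statement so that the produced $2$-liec satisfies a junction invariant at the vertex where the next structure is glued: concretely, that the attachment vertex admits a color in which its cycle-neighbors do not already realize the degree the new edge would create, so that appending a path edge there preserves local irregularity of both incident cycle edges and of the new edge. In the base case $k=1$ the graph is one cycle with pendant paths at distinct degree-three vertices; I would color the cycle with the alternating pattern, spend at most the single allotted doubling to fix the length residue, and attach each pendant path by the junction analysis below, extending it greedily in length-two runs. In the inductive step I would select a leaf cycle $C$ of the block--cut tree together with its connecting path $Q$, delete them to obtain $G'\in\mathfrak{T}^*$ with $k-1$ cycles, color $G'$ by the induction hypothesis while respecting the invariant at the vertex $u$ where $Q$ was attached, and then color $Q$ and $C$, re-establishing the invariant; since $C$ consumes at most one new doubling, this yields $|E_d|\le k$.

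The step I expect to be the main obstacle is precisely this junction coordination, and it is delicate for two reasons. First, a cycle may carry several attachment vertices, possibly adjacent on the cycle, yet the run centers of the alternating pattern (the only monochromatic, color-degree-two cycle vertices) form an independent set, so not all prescribed attachment vertices can be made monochromatic at once; resolving an attachment at a mixed vertex (color-degrees $1$ and $1$) requires a local analysis of the colors realized at its two cycle-neighbors, together with an argument that one of the two color choices for the new edge — after possibly rerouting that cycle's single doubling — always avoids a clash. Second, the paths carry no doubling budget of their own, so the parities of all paths meeting a cycle must be settled against that cycle's one allotted doubling; making this simultaneously compatible with keeping $E_d$ an independent set of non-pendant edges is the crux, and I expect the bulk of the work to be the bookkeeping that a single doubling per cycle suffices to settle the cycle's internal parity and all of its junctions at the same time.
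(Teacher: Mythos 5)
Your high-level skeleton matches the paper's: induction on the number of cycles, peeling off a leaf (pendant) cycle with its connecting path, budgeting one non-pendant doubled edge per cycle, and maintaining a flexibility invariant at the gluing vertex. However, what you have written is a plan, not a proof: you explicitly defer the entire technical core (``the junction coordination,'' ``the bookkeeping that a single doubling per cycle suffices'') and never establish it. That deferred part is precisely where the paper's proof lives --- a long case analysis (seven cases plus subcases) for closing up the coloring of a unicyclic member of $\mathfrak{T}^*$, and, crucially, a strengthened statement proved along the way: when the attachment vertex $v_1$ carries a pendant edge, the unicyclic graph admits \emph{two} 2-liecs with at most one doubling, one with $\deg_B(v_1)=3$ and one with $\deg_B(v_1)\neq 3$ (and similarly $\neq 4$). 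This two-valued flexibility is exactly what makes the $\ell=1$ gluing step of the induction go through; your ``junction invariant'' gestures at it but is never formulated precisely, let alone proved.

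Moreover, the concrete coloring scheme you commit to --- the rigid $RRBB$ alternation on each cycle, with each color class restricted to the cycle a union of $P_3$'s --- does not just make the bookkeeping hard; it breaks. In that pattern every cycle vertex is either a run center (color degrees $(2,0)$) or a run boundary (color degrees $(1,1)$), and both cycle neighbors of a run boundary vertex are run centers of the two colors. Hence attaching a pendant edge of \emph{either} color at a run boundary vertex raises its degree in that color to $2$ and clashes with the adjacent run center; so every attachment vertex would have to be a run center. But run centers form an independent set of fixed parity around the cycle, while $\mathfrak{T}^*$ permits adjacent attachment vertices (and arbitrarily many of them), and you have only one doubling per cycle to spend --- so the clashes cannot in general be repaired within your budget. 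You identify this obstacle yourself but propose no way around it. The paper avoids it entirely by \emph{not} using a rigid alternation: it colors the cycle greedily, absorbing each pendant path into a monochromatic broom together with the preceding cycle edge, with rules driven by the color degree of the previous vertex (allowing color degrees $3$ and $4$ at cycle vertices); the single doubling is then needed only to resolve the wrap-around conflict at the last two cycle edges. Without either that flexible construction or a worked-out substitute for it, your argument has a genuine gap at its center.
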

\begin{proof}
    Suppose first that $G$ is unicyclic.
    We present a general way, how to color $G$. The properties of such a coloring will be later used in an inductive step.

    Let $C = v_1, \dots, v_n$ be a cycle of $G$.
    Since $G$ is different from a cycle, without loss of generality suppose that $v_1$ is of degree three in $G$.
    Denote by $P_i$ the pendant path attached to $v_i$ for all $i \in \{1, \dots, n\}$, and by $\ell_i$ the length of $P_i$.
    In the following, we suppose that $\ell_i \in \{0,1,2\}$ for each $i \in \{1, \dots, n\}$, as it is easy to see that a 2-liec of a multigraph $M$ with a pendant path of length $\ell \in \{1,2\}$ ending at a pendant vertex $v$ can be extended into a 2-liec of the multigraph obtained from $M$ by adding a vertex-disjoint path of even length to $M$ and identifying one of its end vertices with $v$ (simply alternate blue and red edges on pairs of adjacent edges of the added path), see Figure~\ref{fig:extension_pendant_paths} for illustration. 

    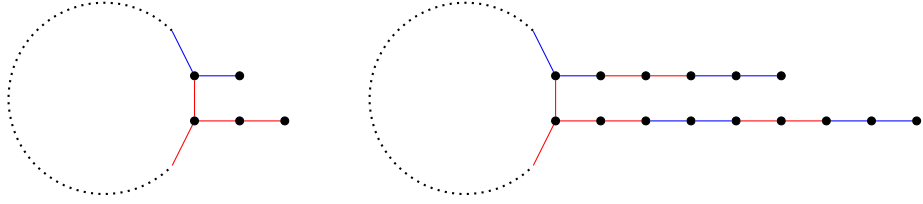
\begin{figure}[h]
        \centering
        \begin{tikzpicture}[scale=0.6]
        \begin{pgfonlayer}{nodelayer}
        \node [style={black_dot}] (0) at (0, 0.5) {};
        \node [style={black_dot}] (1) at (0, -0.5) {};
        \node [style=empty] (2) at (-0.5, 1.5) {};
        \node [style=empty] (3) at (-0.5, -1.5) {};
        \node [style={black_dot}] (4) at (1, -0.5) {};
        \node [style={black_dot}] (5) at (2, -0.5) {};
        \node [style={black_dot}] (6) at (1, 0.5) {};
        \node [style={black_dot}] (7) at (8, 0.5) {};
        \node [style={black_dot}] (8) at (8, -0.5) {};
        \node [style=empty] (9) at (7.5, 1.5) {};
        \node [style=empty] (10) at (7.5, -1.5) {};
        \node [style={black_dot}] (11) at (9, -0.5) {};
        \node [style={black_dot}] (12) at (10, -0.5) {};
        \node [style={black_dot}] (13) at (9, 0.5) {};
        \node [style={black_dot}] (14) at (11, -0.5) {};
        \node [style={black_dot}] (15) at (12, -0.5) {};
        \node [style={black_dot}] (16) at (13, -0.5) {};
        \node [style={black_dot}] (17) at (14, -0.5) {};
        \node [style={black_dot}] (18) at (15, -0.5) {};
        \node [style={black_dot}] (19) at (16, -0.5) {};
        \node [style={black_dot}] (20) at (10, 0.5) {};
        \node [style={black_dot}] (21) at (11, 0.5) {};
        \node [style={black_dot}] (22) at (12, 0.5) {};
        \node [style={black_dot}] (23) at (13, 0.5) {};
        \end{pgfonlayer}
        \begin{pgfonlayer}{edgelayer}
        \draw [style={red_edge}] (3) to (1);
        \draw [style={red_edge}] (1) to (0);
        \draw [style={red_edge}] (1) to (4);
        \draw [style={red_edge}] (4) to (5);
        \draw [style={blue_edge}] (0) to (2);
        \draw [style={blue_edge}] (0) to (6);
        \draw [style={red_edge}] (10) to (8);
        \draw [style={red_edge}] (8) to (7);
        \draw [style={red_edge}] (8) to (11);
        \draw [style={red_edge}] (11) to (12);
        \draw [style={blue_edge}] (7) to (9);
        \draw [style={blue_edge}] (7) to (13);
        \draw [style={red_edge}] (13) to (20);
        \draw [style={red_edge}] (20) to (21);
        \draw [style={red_edge}] (15) to (16);
        \draw [style={red_edge}] (16) to (17);
        \draw [style={blue_edge}] (21) to (22);
        \draw [style={blue_edge}] (22) to (23);
        \draw [style={blue_edge}] (12) to (14);
        \draw [style={blue_edge}] (14) to (15);
        \draw [style={blue_edge}] (17) to (18);
        \draw [style={blue_edge}] (18) to (19);
        \end{pgfonlayer}
        \draw[dotted, thick] (-0.5,1.5) arc (45:315:2.12132);
        \draw[dotted, thick] (7.5,1.5) arc (45:315:2.12132);
        \end{tikzpicture}
        \caption{Example of an extension of pendant paths considered in the proof of Theorem~\ref{special_cacti}.}
        \label{fig:extension_pendant_paths}
    \end{figure}

    Start by coloring $v_n v_1$, $v_1v_2$, and the edges of $P_1$ blue.
    Then continue by coloring $v_iv_{i+1}$ and the edges of $P_i$ for each $i \in \{2, \dots, n-1\}$ in the following way:
    \begin{itemize}
        \item If $v_{i-1} v_i$ is blue (red) and $\deg_B(v_{i-1}) = 1$ ($\deg_R(v_{i-1}) = 1$), color $v_iv_{i+1}$ and $P_i$ blue (red).
        \item If $v_{i-1} v_i$ is blue (red), $\deg_B(v_{i-1}) \in \{2,3\}$ ($\deg_R(v_{i-1}) \in \{2,3\}$), and $\ell_i \in \{0,1\}$, color $v_iv_{i+1}$ and $P_i$ red (blue).
        \item If $v_{i-1} v_i$ is blue (red), $\deg_B(v_{i-1}) = 2$ ($\deg_R(v_{i-1}) = 2$), and $\ell_i = 2$, color $v_i v_{i+1}$ and $P_i$ blue (red).
        \item If $v_{i-1} v_i$ is blue (red), $\deg_B(v_{i-1}) = 3$ ($\deg_R(v_{i-1}) = 3$), and $\ell_i = 2$, color $v_i v_{i+1}$ blue (red) and color $P_i$ red (blue).
    \end{itemize}
    After $v_{n-1}v_n$ was colored, the only uncolored part of $G$ is $P_n$ (if $\ell_n \neq 0$).
    Moreover, it is easy to see that the local irregularity condition holds for almost all edges, with a possible exception of edges $v_{n-1} v_n$ or $v_n v_1$.
    Hence, to transform the obtained partial coloring into a 2-liec, we will only consider the part of $G$ that is close to the edges $v_{n-1}v_n$ and  $v_n v_1$.
    Note that there are 18 possibilities to start with; these possibilities differ in the color of $v_{n-1}v_n$, color degree of $v_{n-1}$ (for the color that is used on $v_{n-1} v_n$), and $\ell_n$. 
    However, a lot of these cases can be solved rather easily by only coloring $P_n$ blue or red (for example if $v_{n-1}v_n$ is red, $\deg_R(v_{n-1}) \neq 1$, and $\ell_n = 0$, then nothing needs to be done, or if $v_{n-1}v_n$ is blue, $\deg_B(v_{n-1}) = 1$, and $\ell_n = 2$, then color $P_n$ red, etc.); we will leave these cases on the reader and we will hence discuss only the problematic cases when some edges need to be recolored or doubled.

    \noindent\textbf{Case 1.} Suppose that $v_{n-1}v_n$ is red, $\deg_R(v_{n-1}) = 1$, and $\ell_n=0$. In this case, recolor $v_n v_1$ to red. If, however, $\ell_1 = 2$ then double the edge $v_1 v_2$. Note that if $\ell_1 = 1$, no doubling was used and $\deg_B(v_1) = 2$, but one could double the edge $v_1v_2$ to create a 2-liec with $\deg_B(v_1) = 3$.

    \noindent\textbf{Case 2.} Suppose that $v_{n-1} v_n$ is blue, $\deg_B(v_{n-1}) = 2$, and $\ell_n = 0$. In this case, double the edge $v_n v_{n-1}$. Note that $\deg_B(v_1) = 4$. For the general step of the mathematical induction, however, we need to have also a different way, how to obtain a 2-liec in this case when $\ell_1 = 1$, where the blue degree of $v_1$ is three. Hence, we distinguish several cases; in all of them suppose that $\ell_1 = 1$:
    
    \noindent\textbf{Subcase 2.1.} Suppose that $v_{n-2} v_{n-1}$ is blue and $\deg_B(v_{n-2}) = 1$. In this case, $\ell_{n-1} = 0$ (otherwise, from the construction of the coloring we would have $\deg_B(n-1) = 3$), $v_{n-3}v_{n-2}$ is red and $\deg_R(v_{n-3}) \in \{2,3\}$. Recolor $v_{n-2} v_{n-1}$ by red, and double it if $\deg_R(v_{n-3}) = 2$. The resulting coloring is a 2-liec in which $\deg_B(v_1) = 3$. 


    \noindent\textbf{Subcase 2.2.}  Suppose that $v_{n-2} v_{n-1}$ is blue and $\deg_B(v_{n-2}) = 3$. In this case we have $\ell_{n-1} = 2$, and $P_{n-1}$ is red. Recolor $v_{n-1} v_n$ to red and double it. In the resulting coloring, $\deg_B(v_1) = 3$.

    \noindent\textbf{Subcase 2.3.} Suppose that $v_{n-2} v_{n-1}$ is red. Then $\deg_R(v_{n-2}) \in \{2,3\}$, $\ell_{n-1} = 1$ and $P_{n-1}$ is blue. in this case recolor $v_{n-1} v_n$ and $P_{n-1}$ to red, and double the edge $v_{n-1} v_n$ if $\deg_R(v_{n-2}) = 3$. In the resulting coloring, $\deg_B(v_1) = 3$.

    \noindent\textbf{Case 3.} Suppose that $v_{n-1} v_n$ is blue, $\ell_{n} = 1$, and $\deg_B(v_{n-1}) = 1$. In this case color $P_n$ blue and then double the edge $v_1v_2$ in order to create a 2-liec in which $\deg_B(v_1) = 4$. In order to create a 2-liec in which $\deg_B(v_1) = 3$, color $P_n$ blue and double the edge $v_{n-1} v_n$.

    \noindent\textbf{Case 4.} Suppose that $v_{n-1} v_n$ is blue, $\ell_{n} = 1$, and $\deg_B(v_{n-1}) \in \{2,3\}$. In this case, color $P_n$ red, and recolor $v_nv_1$ red. If $\ell_1 = 2$, double the edge $v_1 v_2$.
    If $\ell_1 =1$, $\deg_B(v_1) = 2$ and no edge is doubled (to construct a 2-liec with $\deg_B(v_1) = 3$ in this case, double the edge $v_1v_2$).

    \noindent\textbf{Case 5.} Suppose that $v_{n-1} v_n$ is blue, $\ell_{n} = 2$, and $\deg_B(v_{n-1}) = 2$. If $\ell_1 = 2$, color $P_n$ blue and double the edge $v_1v_2$; in this case $\deg_B(v_1) = 4$.
    However, if $\ell_1 = 1$, color $P_n$ red, recolor $v_n v_1$ to red, and double $v_nv_1$ in order to create a 2-liec in which $\deg_B(v_1) = 2$, or color $P_n$ blue and double $v_1v_2$ in order to create a 2-liec with $\deg_B(v_1) = 4$.

    \noindent\textbf{Case 6.} Suppose that $v_{n-1} v_n$ is red, $\ell_n = 2$, and $\deg_R(v_{n-1}) \in \{1,2\}$. In this case, color $P_n$ red and recolor $v_n v_1$  to red. If $\ell_1 = 2$, double the edge $v_1v_2$.
    However, if $\ell_1 = 1$, $\deg_B(v_1) = 2$ and no edge is doubled (to construct a 2-liec with $\deg_B(v_1) = 3$ in this case, double the edge $v_1v_2$).

    \noindent\textbf{Case 7.} Suppose that $v_{n-1} v_n$ is red, $\ell_n = 2$, and $\deg_R(v_{n-1}) = 3$. In this case, color $P_n$ blue and recolor $v_nv_1$ to red. If $\ell_1 = 2$, double the edge $v_1v_2$.
    However, if $\ell_1 = 1$, $\deg_B(v_1) = 2$ and no edge is doubled (to construct a 2-liec with $\deg_B(v_1) = 3$ in this case, double the edge $v_1v_2$).

    Cases 1 -- 7 describe all nontrivial cases of how to finish the produced coloring of $G$ in such a way that the resulting coloring is a 2-liec and no more than one edge is doubled.
    It is also easy to see that the double edge is not a pendant edge in any of the cases.
    Moreover, it is worth mentioning that if $\ell_1 = 1$, there is a 2-liec of $G$ with at most one edge doubled, in which $\deg_B(v_1) = 3$, and there is a 2-liec of $G$ with at most one edge doubled, in which $\deg_B(v_1) \neq 3$ (for the cases that are not described in Case 1 -- 7, in order to create a 2-liec in which $\deg_B(v_1) = 4$, double the edge $v_1 v_2$, as there was not any doubling before and $\deg_B(v_2)$ was 1). 

    Consider now the case when $k \geq 2$; assume that for each $G' \in \mathfrak{T}^*$ with $k' < k$ cycles, there is a an independent set of non-pendant edges $E_d'$ of size at most $k'$ such that $G' + E_d'$ has a 2-liec, or $G'$ is a cycle.

    Let $C$ be a pendant cycle of $G$, i.e., a cycle that is connected to only one other cycle of $G$ by a path $P$ with inner vertices of degree two in $G$ (the existence of such a cycle follows from the definition of graphs in $\mathfrak{T}^*$).
    Denote by $v_C$ and $v_R$ the end vertices of $P$ such that $v_C \in V(C)$ and $v_R$ is the vertex of the other cycle of $G$.
    Denote by $\ell$ the length of $P$.
    
    First, consider the case when $\ell \geq 2$. 
    Let $u$ be a neighbor of $v_C$ on $P$.
    Split $G$ into two connected subgraphs $G_1$ and $G_2$ such that $G = G_1 \cup G_2$, $\{u\} = V(G_1) \cap V(G_2)$, and $C \subseteq G_1$.
    Clearly, $G_1$ is unicyclic and $G_2$ has $k-1$ cycles, and both $G_1$ and $G_2$ are different from cycles, as they have pendant paths of non-zero lengths. 
    From the assumption, we have that there are sets $E_1$ and $E_2$ of at most one and at most $k-1$ independent non-pendant edges of $G_1$ and $G_2$, respectively, such that multigraphs $G_1 + E_1$ and $G_2 + E_2$ are locally irregular 2-colorable.
    Take a 2-liec of $G_1+E_1$ in which $uv_C$ is blue and take a 2-liec of $G_2+E_2$ in which the edge incident to $u$ is red, and combine these two colorings into a 2-liec of $G + (E_1 \cup E_2)$.
    Clearly, no edge in $E_1 \cup E_2$ is pendant and, since edges of $E_1$ and $E_2$ were not pendant in $G_1$ and $G_2$ (hence, none of the edges incident to $u$ is in $E_1$ or $E_2$), $E_1 \cup E_2$ is an independent set. 

    In the following, suppose that $\ell = 1$.
    Let $G_1$ and $G_2$ be connected subgraphs of $G$ such that $G = G_1 \cup G_2$, $V(G_1) \cap V(G_2) = \{v_C, v_R\}$, and $C \subseteq G_1$.
    From the induction assumption we have that there is an independent set of at most $k-1$ non-pendant edges $E_2$ of $G_2$ such that $G_2 + E_2$ has a 2-liec $\varphi$.
    Without loss of generality assume that the edge $v_C v_R$ is blue in $\varphi$; denote by $q$ the blue degree of $v_R$ in $\varphi$.
    Clearly $q \in \{2,3,4\}$.

    If $q = 2$, let $G_1'$ be a graph created from $G'$ by adding a new vertex $x$ and an edge $v_R x$.
    Let $\psi$ be a 2-liec of $G_1'$ obtained in the same way as described above, starting from $v_C$.
    Hence, $\deg_B(v_R) = 2$ in $\varphi$, and one can easily combine $\varphi$ and $\psi$ in order to create a 2-liec of $G$: for each edge $e \in E(G)$ let $\xi(e) = \varphi(e)$ if $e \in E(G_2)$ and $\xi(e) = \psi(e)$ if $e \in E(G_1)$.

    If $q = 3$, take a 2-liec of $G_1$ (with a doubled edge, if necessary) in which $v_Cv_R$ is blue and $\deg_B(v_C) \neq 3$, and if $q = 4$ take a 2-liec of $G_1$ in which $v_Cv_R$ is blue and $\deg_B(v_C) \neq 4$; we already showed the existence of such colorings. 
    Combine such a coloring of $G_1$ with the coloring of $G_2$.  
\end{proof}

For a graph $G$ from the family $\mathfrak{T}$, let $G_\triangle$ denote the tree whose vertices are the triangles of $G$, and two vertices are adjacent in $G_\triangle$ if and only if an odd-length path in $G$ connects the corresponding triangles. 
The triangles of $G$ that correspond to pendant vertices of $G_\triangle$ will be called pendant triangles of $G$.
We show that the number of pendant triangles of $G$ gives a lower bound on $\mathcal{D}_\mathrm{lir}(G)$ which implies that $\mathcal{D}_\mathrm{lir}$ is not upper-bounded by a constant in general. 

\begin{lem}\label{lemma_doublings_on_triangles}
    If $G \in \mathfrak{T}$ and it has at least three triangles then $\mathcal{D}_\mathrm{lir}(G)$ is at least the number of pendant triangles without vertices of degree two in $G$.
\end{lem}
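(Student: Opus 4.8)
The plan is to prove the bound by a charging argument: to each pendant triangle without degree-two vertices I will assign a private set of edges (its \emph{region}), show that any admissible doubling set $E_d$ must contain at least one edge of that region, and finally observe that the regions of distinct such triangles are edge-disjoint, so their contributions add up.

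First I would record the structural facts about $\mathfrak{T}$ that make the argument go through. Since every new triangle is attached through a path of positive length, the triangles of $G$ are pairwise vertex-disjoint; moreover the connecting paths are odd and the pendant paths are even, and because a new triangle is glued to its parent at a single (thereafter degree-three) vertex, the apex of every triangle has degree exactly three and can never acquire a pendant path. Consequently a pendant triangle $T=abc$ with no vertex of degree two has all three vertices of degree three: the apex $a$ carries the odd path $au_1\dots$ to its parent, while $b$ and $c$ each carry exactly one even pendant path $Q_b$, $Q_c$. To such a $T$ I assign the region $R(T)=E(T)\cup E(Q_b)\cup E(Q_c)\cup\{au_1\}$. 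Using vertex-disjointness of the triangles together with the hypothesis of at least three triangles (which guarantees that no two leaves of $G_\triangle$ are adjacent, so their connecting paths, hence their near-edges $au_1$, are distinct), these regions are pairwise edge-disjoint.

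The heart of the proof is the local claim that any 2-liec of $G+E_d$ (with no two-coloured multiedge) satisfies $E_d\cap R(T)\neq\emptyset$. Assume the contrary, so that no edge of $R(T)$ is doubled. On the even path $Q_b$ the locally irregular condition forces the standard block structure: starting from the leaf edge, consecutive edges must pair up into monochromatic paths of length two, so $bv_1$ receives the same colour $\gamma_b$ as the next path edge and one gets $\deg_{\gamma_b}(b)\neq 2$; the same holds at $c$ with $\gamma_c$. A short case check on the colour of $bc$ (there are only two choices, and for each the constraints $\deg_{\gamma_b}(b)\neq 2$ and $\deg_{\gamma_c}(c)\neq 2$ leave only the options that force $ab$ and $ac$ to take the colour of $bc$) then shows that the whole triangle must be monochromatic, say red, with $\{\deg_R(b),\deg_R(c)\}=\{2,3\}$. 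But the apex $a$ then has degree three with two red triangle-edges, so $\deg_R(a)\in\{2,3\}$, which cannot avoid both $\deg_R(b)$ and $\deg_R(c)$; this violates local irregularity on $ab$ and $ac$, the desired contradiction.

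Summing over all pendant triangles $T$ without degree-two vertices and invoking edge-disjointness of the regions gives $|E_d|\ge\sum_T |E_d\cap R(T)|\ge$ (number of such triangles); taking $E_d$ of minimum size yields the claimed inequality for $\mathcal{D}_\mathrm{lir}(G)$. I expect the main obstacle to be the local claim itself, in two respects: first, carefully justifying that the even pendant paths really do force $\deg_{\gamma_b}(b)\neq 2$ and then pushing the short but slightly fiddly case analysis on $bc$ through to monochromaticity; and second, handling the fact that the forced doubling may land on the connecting edge $au_1$ rather than inside the triangle, which is precisely why $au_1$ must be included in $R(T)$ and why the ``at least three triangles'' hypothesis, ensuring these near-edges are never shared between two leaves, is essential.
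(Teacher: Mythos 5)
Your proposal is correct and follows essentially the same route as the paper's proof: the paper likewise works with the triangle together with its two even pendant paths and the first edge of the connecting path (its subgraph $H_1$), uses the uniqueness of the 2-liec of an even-length path to force the colour degree at each attachment vertex away from two, deduces that the triangle must be monochromatic, and obtains the contradiction either across the edge $bc$ or at the apex. The only difference is one of presentation: you make explicit the edge-disjointness of the regions and the role of the at-least-three-triangles hypothesis in keeping the connecting edges distinct, a counting step the paper leaves implicit.
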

\begin{proof}
    Let $H_0$ be a pendant triangle in $G$ whose all vertices are of degree three.
    Denote by $x_0, y_0, z_0$ the vertices of $H$ in such a way that $y_0$ and $z_0$ are the end vertices of pendant paths in $G$; let $y_0, y_1, \dots, y_{2p}$ and $z_0, z_1, \dots, z_{2q}$ be the vertices on these paths.
    Let $x_1$ be the neighbor of $x_0$ different from $y_0$ and $z_0$.

    Let $H_1$ be the subgraph of $G$ induced on the vertices $x_0,x_1, y_0, y_1 , \dots, y_{2p}, z_0, z_1, \dots, z_{2q}$, i.e., $H_1 = G[x_0,x_1, y_0, y_1 , \dots, y_{2p}, z_0, z_1, \dots, z_{2q}]$. 
    By $E_d$ denote any set of $\mathcal{D}_\mathrm{lir}(G)$ edges such that $G + E_d$ has a 2-liec in which parallel edges are colored the same; we will show that $E_d \cap E(H_1) \neq \emptyset$.
    In the following consider a 2-liec of $G + E_d$.

    Suppose to the contrary that $E_d \cap E(H_1) = \emptyset$.
    Since 2-liec of a path of even length is unique (up to interchange of colors), it is sufficient to consider the graph $H_2 = G[x_0,x_1, y_0, y_1, y_2, z_0, z_1,z_2]$ and distinguish two cases depending on whether the color used on $y_0y_1$ and $y_1y_2$ is the same as the color used on $z_0z_1$ and $z_1z_2$, or not.

    Suppose first that edges $y_0y_1, y_1y_2, z_0z_1, z_1z_2$ are red.
    Then either none or both of the edges $x_0y_0, y_0z_0$ are red, as otherwise  $\deg_R(y_0) = \deg_R(y_1)$.
    Similarly, either none or both of the edges $x_0z_0, y_0z_0$ are colored red.
    Combining these two observations we get that the triangle $H_0$ is monochromatic. 
    This however yields a contradiction since either $\deg_R(y_0) = 3 = \deg_R(z_0)$ (if $y_0z_0$ is red) or $\deg_B(y_0) = 2 = \deg_B(z_0)$ (if $y_0z_0$ is blue).

    Suppose next that $y_0y_1, y_1y_2$ are red and  $z_0z_1,z_1z_2$ are blue.
    If exactly one of the edges $x_0y_0, y_0z_0$ was red then $\deg_R(y_0) = \deg_R(y_1)$, 
    and similarly, if exactly one of the edges $x_0z_0, y_0z_0$ was colored blue then $\deg_B(z_0) = \deg_B(z_1)$.
    It follows from these observations that $H_0$ is monochromatic in this case also.
    If, on the one hand, $x_0y_0,x_0z_0,y_0z_0$ are red then either $\deg_R(x_0) = \deg_R(y_0) = 3$ (if $x_0x_1$ is red) or $\deg_R(x_0) = \deg_R(z_0) = 2$ (if $x_0x_1$ is blue).
    If, on the other hand, $x_0y_0,x_0z_0,y_0z_0$ are blue then either $\deg_B(x_0) = \deg_B(y_0) = 2$ (if $x_0x_1$ is red) or $\deg_B(x_0) = \deg_B(z_0) = 2$ (if $x_0x_1$ is blue).

    Hence, in either case, the considered coloring violates the locally irregular coloring condition, proving the fact that $E_d \cap E(H_1) \neq \emptyset$.
\end{proof}

As an immediate corollary of Lemma~\ref{lemma_doublings_on_triangles} we get:
\begin{cor}\label{corollary_D_not_constant}
    For every constant $k$ there is a graph $G$ such that $\mathcal{D}_{\mathrm{lir}}(G) \geq k$. 
\end{cor}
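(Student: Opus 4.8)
The plan is to read Corollary~\ref{corollary_D_not_constant} off Lemma~\ref{lemma_doublings_on_triangles} by exhibiting, for each $k$, a single graph $G_k \in \mathfrak{T}$ having at least $k$ pendant triangles none of whose vertices has degree two. In a cactus of this family every vertex of a triangle has degree two or three, so such a pendant triangle automatically has all three vertices of degree three; hence, as soon as $G_k$ has at least three triangles, Lemma~\ref{lemma_doublings_on_triangles} gives $\mathcal{D}_\mathrm{lir}(G_k) \ge k$, which is exactly the claim (for small $k$ one may simply reuse $G_3$). The entire task therefore reduces to a construction inside $\mathfrak{T}$.

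For the construction I would first fix a blueprint tree $\mathcal{T}$ of maximum degree at most three with at least $k$ leaves, for instance the complete binary tree of depth $\lceil \log_2 k \rceil$. I then build a \emph{tree of triangles} modelled on $\mathcal{T}$: replace each node of $\mathcal{T}$ by a copy of $C_3$, replace each edge of $\mathcal{T}$ by an odd-length path joining the two corresponding triangles (this is precisely the second recursive clause defining $\mathfrak{T}$; length one already suffices), and, since each triangle has three vertices and each node of $\mathcal{T}$ has degree at most three, assign a distinct triangle vertex to each incident tree-edge. Finally, to each of the two still-unused degree-two vertices of every leaf triangle I attach an even-length pendant path (say of length two), which is permitted by the first recursive clause; this raises all three vertices of each leaf triangle to degree three. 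Because the triangles of $G_k$ are exactly the images of the nodes of $\mathcal{T}$ and they are pairwise joined by odd-length paths exactly along the edges of $\mathcal{T}$, we have $(G_k)_\triangle \cong \mathcal{T}$, so the pendant triangles of $G_k$ are precisely the images of the leaves of $\mathcal{T}$; there are at least $k$ of them, each with all vertices of degree three, and $G_k$ has at least $k \ge 3$ triangles in total.

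The one genuinely delicate point, which I would write out most carefully, is the certificate that $G_k$ actually lies in $\mathfrak{T}$, i.e. that it can be produced by a legal sequence of the two recursive operations. I would give an explicit order: start from a single $C_3$ (the base case), add the remaining triangles one at a time in breadth-first order along $\mathcal{T}$ via the odd-path operation, each time identifying the new triangle's connecting vertex through an odd path with a vertex of an already-present triangle that is at that moment of degree two and lies on a $3$-cycle (such a vertex exists because a triangle is born with three degree-two vertices and receives at most three attachments), and only afterwards perform the even-path decorations of the leaf triangles. Verifying at every step that the identified vertex is simultaneously of degree two and on a $3$-cycle, and that the odd/even parity of each inserted path matches the correct clause of the definition, is the main obstacle; once this bookkeeping is in place, the remaining items—counting triangles, matching leaves of $(G_k)_\triangle$ to pendant triangles, and checking the degree-three condition—are routine, and an appeal to Lemma~\ref{lemma_doublings_on_triangles} completes the argument.
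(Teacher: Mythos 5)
Your proposal is correct and takes essentially the same approach as the paper: the paper treats this corollary as immediate from Lemma~\ref{lemma_doublings_on_triangles}, implicitly relying on the existence of graphs in $\mathfrak{T}$ with arbitrarily many pendant triangles whose vertices all have degree three, which is exactly what your tree-of-triangles construction (with odd connecting paths following a bounded-degree blueprint tree and even pendant paths decorating the leaf triangles) supplies explicitly. Your membership certificate for $\mathfrak{T}$ and the final appeal to the lemma are both sound, so the argument goes through.
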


For the subgraph $H_2$ of $G$ in the proof of Lemma~\ref{lemma_doublings_on_triangles} we showed that at least one doubling on edges of $H_2$ is always needed.
This allows us to construct graphs (in most cases outside of $\mathfrak{T}$) which need relatively many doublings, namely $\frac{1}{8}|E(G)|$ doublings.
To construct such graphs, consider several disjoint copies of $H_2$ and identify all copies of the vertex $x_1$ (see Figure~\ref{fig:1/8doubled_edges} for illustration).
Due to same reasons as in the proof of Lemma~\ref{lemma_doublings_on_triangles}, there is at least one edge that need to be doubled in each copy of $H_2$.
On the other hand, by doubling edges incident to $x_1$, one can obtain a 2-liec of the resulting multigraph where parallel edges are colored with the same color, see Figure~\ref{fig:1/8doubled_edges}.

\begin{figure}[h]
    \centering
        \begin{tikzpicture}
        \begin{pgfonlayer}{nodelayer}
        \node [style={black_dot}] (0) at (0, 0) {};
        \node [style={black_dot}] (1) at (-1, 0) {};
        \node [style={black_dot}] (2) at (0, 1) {};
        \node [style={black_dot}] (3) at (1, 0) {};
        \node [style={black_dot}] (4) at (1.75, 0.5) {};
        \node [style={black_dot}] (5) at (1.75, -0.5) {};
        \node [style={black_dot}] (6) at (2.5, 0.5) {};
        \node [style={black_dot}] (7) at (2.5, -0.5) {};
        \node [style={black_dot}] (8) at (3.25, -0.5) {};
        \node [style={black_dot}] (9) at (3.25, 0.5) {};
        \node [style={black_dot}] (10) at (-1.75, 0.5) {};
        \node [style={black_dot}] (11) at (-1.75, -0.5) {};
        \node [style={black_dot}] (12) at (-2.5, -0.5) {};
        \node [style={black_dot}] (13) at (-2.5, 0.5) {};
        \node [style={black_dot}] (14) at (-3.25, 0.5) {};
        \node [style={black_dot}] (15) at (-3.25, -0.5) {};
        \node [style={black_dot}] (16) at (-0.5, 1.75) {};
        \node [style={black_dot}] (17) at (0.5, 1.75) {};
        \node [style={black_dot}] (18) at (-0.5, 2.5) {};
        \node [style={black_dot}] (19) at (-0.5, 3.25) {};
        \node [style={black_dot}] (20) at (0.5, 3.25) {};
        \node [style={black_dot}] (21) at (0.5, 2.5) {};
        \end{pgfonlayer}
        \begin{pgfonlayer}{edgelayer}
        \draw [style={blue_edge}] (14) to (13);
        \draw [style={blue_edge}] (13) to (10);
        \draw [style={blue_edge}] (5) to (7);
        \draw [style={blue_edge}] (7) to (8);
        \draw [style={blue_edge}] (17) to (21);
        \draw [style={blue_edge}] (21) to (20);
        \draw [style={red_edge}] (19) to (18);
        \draw [style={red_edge}] (18) to (16);
        \draw [style={red_edge}] (16) to (17);
        \draw [style={red_edge}] (17) to (2);
        \draw [style={red_edge}] (2) to (16);
        \draw [style={red_edge}] (10) to (11);
        \draw [style={red_edge}] (1) to (11);
        \draw [style={red_edge}] (1) to (10);
        \draw [style={red_edge}] (11) to (12);
        \draw [style={red_edge}] (12) to (15);
        \draw [style={red_edge}] (3) to (4);
        \draw [style={red_edge}] (3) to (5);
        \draw [style={red_edge}] (4) to (6);
        \draw [style={red_edge}] (4) to (5);
        \draw [style={red_edge}] (6) to (9);
        \draw [style={red_edge}, bend left=330] (3) to (0);
        \draw [style={red_edge}, bend left] (2) to (0);
        \draw [style={red_edge}, bend left] (0) to (1);
        \draw [style={red_edge}, bend left] (1) to (0);
        \draw [style={red_edge}, bend left] (0) to (2);
        \draw [style={red_edge}, bend right] (0) to (3);
        \end{pgfonlayer}
        \end{tikzpicture}
    \caption{Example of a graph $G$ which needs $\frac{1}{8}|E(G)|$ doubled edges.}
    \label{fig:1/8doubled_edges}
\end{figure}

Note that in the constructions of graphs which need relatively many doublings, for example graphs considered in Lemma~\ref{lemma_doublings_on_triangles} and the construction illustrated in Figure~\ref{fig:1/8doubled_edges}, pendant paths of length two may be replaced by shrubs such that in every 2-liec of these shrubs, the root edge is a part of the monochromatic component in which the neighbor of the root has degree two (similarly to the neighbor of the end vertex of a path of even length in the case of graphs from $\mathfrak{T}$). Such an observation gives us more possibilities when constructing graphs $G$ for which $\mathcal{D}_\mathrm{lir}(G)$ exceeds any prescribed constant. However, replacing pendant paths of length two with such shrubs lower the ratio of the number of doubled edges to the number of all edges of a graph. Hence, the following question arises:
Is there a constant $k > \frac{1}{8}$ such that there are infinitely many connected graphs $G$ with $\mathcal{D}_\mathrm{lir}(G) \geq k |E(G)|$?

\section{Complete graphs, complete bipartite graphs and split graphs}

In~\cite{Grzelec wozniak}, an easy construction of the locally irregular 2-coloring of $^2G$, where $G$ is a complete $k$-partite graph ($k \geq 2$), was provided.
Even though the construction involves using red-blue edges, its use is limited to coloring the 3-partite subgraph of $G$. 
This allows building on this construction and, when considering another way of locally irregular 2-coloring of a complete 3-partite graph, a way to prove the following:
\begin{tw}
    If $G$ is complete $k$-partite graph different from $K_k$ then \linebreak ${\rm lir}(G) \leq 2$.
\end{tw}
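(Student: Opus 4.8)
The plan is to exhibit, for every complete $k$-partite graph $G=K_{n_1,\dots,n_k}$ with $G\neq K_k$, an edge $2$-coloring (red/blue) in which each color class is locally irregular. Write $V_1,\dots,V_k$ for the parts, $n_i=|V_i|$, $N=\sum_i n_i$, and recall that every vertex of $V_i$ has degree $N-n_i$; hence two vertices have different degrees precisely when their parts have different sizes. The first, easy observation is that if all the $n_i$ are pairwise distinct then $G$ is itself locally irregular (each edge joins two parts of different sizes, hence two vertices of different degree), so $\mathrm{lir}(G)=1$ and we are done. Thus the whole difficulty is concentrated in breaking ties among parts of equal size.

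I would organize the coloring around the following reduction. Suppose we color the edges ``by pairs of parts'', i.e.\ for each pair $\{i,j\}$ we send either all or none of the $n_in_j$ edges between $V_i$ and $V_j$ into the red class. Then every vertex of $V_i$ has the same red degree $r_i=\sum_{j:\,ij\text{ red}}n_j$ and the same blue degree $(N-n_i)-r_i$, and the coloring is a $2$-liec exactly when, on the reduced complete graph $K_k$ with vertex weights $n_j$, every red edge $ij$ satisfies $r_i\neq r_j$ and every blue edge $ij$ satisfies $n_i+r_i\neq n_j+r_j$. This recasts the task as a weighted local-irregularity problem on the much smaller graph $K_k$, which can be attacked by induction on $k$ once the part sizes are sufficiently spread out.

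This pairs-of-parts coloring cannot succeed in the most balanced situations: already for $K_{n,n}$ ($k=2$) there is a single pair, and assigning its whole bipartite graph one color forces $r_1=r_2$. Here one must split the edges of some complete bipartite piece $K_{n_i,n_j}$ unevenly between the two colors, which is exactly where I would invoke the construction of \cite{Grzelec wozniak}. That construction produces a locally irregular $2$-coloring of $^{2}G$ in which every doubled edge is monochromatic except for those lying in one distinguished complete tripartite subgraph, where genuinely two-colored multiedges are used. Outside this tripartite core the two copies of each doubled edge receive the same color, so the coloring descends verbatim to a single-color-per-edge locally irregular coloring of the corresponding simple edges of $G$. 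My plan is therefore to keep this outer coloring unchanged and to replace the coloring of the tripartite core by a new, single-color-per-edge locally irregular $2$-coloring of the simple complete tripartite graph $K_{a,b,c}$ --- the ``other way'' of coloring the $3$-partite subgraph --- chosen so that the red and blue degrees it induces at the boundary vertices match what the outer coloring expects.

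The remaining work, and the main obstacle, is twofold. First, one must supply explicit single-color-per-edge colorings of the small building blocks: the balanced bipartite graph $K_{n,n}$ (the only nontrivial complete bipartite instance, since $K_{m,n}$ with $m\neq n$ is locally irregular outright) and the complete tripartite graph $K_{a,b,c}$, each with prescribed red/blue degrees at specified vertices; a near-triangular split of $K_{n,n}$ adjusted along its anti-diagonal to remove the one degree coincidence, together with a careful assignment of the cross-edges of $K_{a,b,c}$, should furnish these. Second, and more delicately, one must verify local irregularity across the seam where the tripartite core is glued to the rest: every red (resp.\ blue) edge with one endpoint in the core and one outside must still join vertices of different red (resp.\ blue) degree. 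Since equal-size parts are the only source of conflict, this degree bookkeeping reduces to a finite set of parity and tie-breaking checks, and the tripartite gadget is precisely what provides the odd degree adjustment needed to resolve the balanced cases that the pairs-of-parts coloring alone cannot handle.
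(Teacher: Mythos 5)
Your overall strategy matches the paper's: keep the ``all edges between a pair of parts get one color'' structure away from a small core, recognize that tied part sizes are the only obstruction, and resolve them by replacing the red-blue tripartite core of the construction from the cited 2-multigraph paper with a new single-color-per-edge 2-coloring. However, as written this is a plan rather than a proof, because the two items you yourself flag as ``the remaining work'' are precisely the mathematical content. First, you never exhibit the core colorings. The crucial case is the fully balanced tripartite graph $K_{n,n,n}$ (here $n\geq 2$ since $G \neq K_k$), where no pairs-of-parts coloring can succeed; the paper resolves it by taking a perfect matching $M$ between $A_1$ and $A_3$ and coloring $M$ together with all $A_1$--$A_2$ edges blue and the rest red, giving blue degrees $n+1,\, n,\, 1$ and red degrees $n-1,\, n,\, 2n-1$ on $A_1, A_2, A_3$ respectively. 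This is a short but non-obvious gadget that your ``careful assignment of the cross-edges'' does not supply, and the cases $n_1=n_2>n_3$ and $n_1>n_2=n_3$ likewise need explicit (if easy) colorings.

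Second, the extension from the core to general $k$ is not routine bookkeeping, and your proposal lacks the mechanism that makes it work. The paper's induction colors all edges incident to the new part $A_k$ blue when $k$ is even and red when $k$ is odd, and propagates the invariant that every vertex keeps non-zero red degree (for even $k$; blue for odd $k$). This invariant is exactly what kills seam conflicts: a vertex of $A_k$ has blue degree $\sum_{i=1}^{k-1} n_i$, while every old vertex has blue degree at most $n_k + \sum_{i=1}^{k-2} n_i - 1 < \sum_{i=1}^{k-1} n_i$, using $n_k \leq n_{k-1}$ and the positive red degree. Without an invariant of this kind, your ``finite set of parity and tie-breaking checks'' is not actually finite: the seam comparisons depend on $k$ and on the whole multiset of part sizes. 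Relatedly, be careful with the claim that the outer multigraph coloring ``descends verbatim'': descending halves every color degree coming from doubled monochromatic edges, which preserves inequalities only between vertices whose degrees are both halved; every edge with one endpoint in the replaced core compares a halved degree against a newly constructed one, and nothing in the proposal pins those values down.
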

\begin{proof}
    Let $G = K_{n_1, \dots, n_k}$ where $n_1 \geq \dots \geq n_k$, and let $A_1, \dots, A_k$ be the partition of vertices of $G$ into $k$ independent sets of sizes $n_1, \dots, n_k$. 
    The case when $k = 2$, i.e., $G$ is bipartite, was proved by Baudon, Bensmail, Przybyło, and Woźniak in~\cite{Baudon Bensmail Przybylo Wozniak}.
    
    Consider now that $G$ is a complete 3-partite graph.
    If $n_1 > n_2 > n_3$ then $G$ is a locally irregular graph, and thus, ${\rm lir}(G) = 1$; in the following suppose that all of its edges are colored red.
    
    If $n_1 = n_2 > n_3$ then color the edges between $A_1$ and $A_3$ by blue color and other edges by red color; it is easy to see that in this case, we have
    \begin{align*}
        \deg_B(v) = \left\{ \begin{matrix}
            n_3 &\text{if } v \in A_1, \\
            0 &\text{if } v \in A_2, \\
            n_1 &\text{if } v \in A_3, \\
        \end{matrix} \right.
        \quad
        \text{and}
        \quad
        \deg_R(v) = \left\{ \begin{matrix}
            n_1 &\text{if } v \in A_1, \\
            n_1+n_3 &\text{if } v \in A_2, \\
            n_1 &\text{if } v \in A_3. \\
        \end{matrix} \right.
    \end{align*}
    The only case when two adjacent vertices $u$ and $v$ in $G$ have the same red or blue degrees is the case when $u \in A_1$ and $v \in A_3$; in this case, however, the edge $uv$ is blue and $\deg_B(u) \neq \deg_B(v)$. 
    Hence, the coloring is locally irregular 2-coloring of $G$.

    Next, consider the case when $n_1 > n_2 = n_3$.
    Color the edges between $A_1$ and $A_2$ blue and other edges red.
    Similar to the previous case, we have
    \begin{align*}
        \deg_B(v) = \left\{ \begin{matrix}
            n_2 &\text{if } v \in A_1, \\
            n_1 &\text{if } v \in A_2, \\
            0 &\text{if } v \in A_3, \\
        \end{matrix} \right.
        \quad
        \text{and}
        \quad
        \deg_R(v) = \left\{ \begin{matrix}
            n_2 &\text{if } v \in A_1, \\
            n_2 &\text{if } v \in A_2, \\
            n_1+n_2 &\text{if } v \in A_3. \\
        \end{matrix} \right.        
    \end{align*}
    Even though $\deg_R(u) = \deg_R(v)$ for each $u \in A_1$ and $v \in A_2$, from the construction we have that $uv$ is blue. 
    Hence, the coloring is a locally irregular coloring of $G$.

    Consider now the case when $n_1 = n_2 = n_3$ (note that $n_1 \geq 2$ as $G$ is not a complete graph). 
    Let $M$ be a perfect matching of a subgraph of $G$ induced on $A_1 \cup A_3$.
    Color every edge between $A_1$ and $A_2$ and every edge of $M$ blue, and color the remaining edges red.
    We have
    \begin{align*}
        \deg_B(v) = \left\{ \begin{matrix}
            n_1 + 1 &\text{if } v \in A_1, \\
            n_1 &\text{if } v \in A_2, \\
            1 &\text{if } v \in A_3, \\
        \end{matrix} \right.
        \quad
        \text{and}
        \quad
        \deg_R(v) = \left\{ \begin{matrix}
            n_1 - 1 &\text{if } v \in A_1, \\
            n_1 &\text{if } v \in A_2, \\
            2n_1-1 &\text{if } v \in A_3. \\
        \end{matrix} \right.
    \end{align*}
    The coloring is a locally irregular 2-coloring of $G$ (since $n_1 \geq 2$).

    So far we proved that if $k \leq 3$ then $G$ has a 2-liec.
    It is also noteworthy that in each presented coloring of a complete 3-partite graph, each vertex has a non-zero red degree, hence, the blue degree of each vertex is upper bounded by $n_1+n_2 - 1$. 
    For $k \geq 4$ we use the inductive construction based on the construction for 2-multigraphs in~\cite{Grzelec wozniak}:
    \begin{itemize}
        \item if $k$ is even, find a locally irregular coloring of $K_{n_1, \dots, n_{k-1}}$ in which the red degree of every vertex is non-zero, and color every edge incident to $A_k$ blue;
        \item if $k$ is odd, find a locally irregular coloring of $K_{n_1, \dots, n_{k-1}}$ in which the blue degree of every vertex is non-zero, and color every edge incident to $A_k$ red.
    \end{itemize}
    Note that the resulting coloring of $K_{n_1, \dots, n_k}$ preserves the property that if $k$ is even then the blue degree of each vertex is non-zero, and if $k$ is odd then the red degree of each vertex is non-zero.
    Moreover, blue and red degrees of vertices of $K_{n_1, \dots, n_{k-1}}$ are raised by the same constants when the coloring of $K_{n_1, \dots, n_k}$ is created, hence, if the coloring of $K_{n_1, \dots, n_k}$ is not locally irregular, the conflict is on the edge incident to $A_k$.
    However, if $k$ is even then every edge incident to $A_k$ is colored blue, hence $\deg_B(v) = \sum_{i=1}^{k-1}n_i$ for each $v \in A_k$, and, on the other hand, $\deg_B(v) \leq n_k + \sum_{i=1}^{k-2}n_i$ for each $v \in \bigcup_{i=1}^{k-1}A_i$ (since $\deg_R(v) \geq 1$ for each such vertex). 
    Using the fact that $n_k \leq n_{k-1}$, it is easy to see that the coloring of $K_{n_1, \dots, n_k}$ is locally irregular.
    Essentially the same can be shown in the case when $k$ is odd; colors red and blue switch their roles in the argument.
\end{proof}

The previous theorem excluded complete graphs, which was expected since $\mathrm{lir}(K_n) = 3$ for each $n \geq 4$, see~\cite{Baudon Bensmail Przybylo Wozniak}.
For complete graphs, we therefore need to use at least one doubling to obtain a multigraph that admits a 2-liec.
We show that two doubled edges are sufficient, but except for the finite number of cases doubling of one edge suffices.

\begin{tw}\label{thm_Kn}
    For complete graph $K_n$ of order $n \geq 4$ we have
    \begin{align*}
        \mathcal{D}_{\mathrm{lir}}(K_n) = \left\{ \begin{matrix} 2 &\text{if } 6 \leq n \leq 10, \\ 1 &\text{otherwise.}  \end{matrix} \right.
    \end{align*}
\end{tw}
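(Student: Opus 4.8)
The starting point is the trivial lower bound: since $\mathrm{lir}(K_n)=3$ for $n\geq 4$, the graph $K_n$ itself has no $2$-liec, so $\mathcal{D}_{\mathrm{lir}}(K_n)\geq 1$ for every $n\geq 4$. The real leverage comes from a rigidity lemma that I would prove first. Suppose an edge $e=xy$ is doubled and $K_n+\{e\}$ has a $2$-liec. Every vertex $w\notin\{x,y\}$ satisfies $\deg_R(w)+\deg_B(w)=n-1$, so two such vertices with equal red degree automatically have equal blue degree as well. But any two vertices of $K_n$ are adjacent, and the monochromatic edge joining them forces its endpoints to differ in that very colour, a contradiction. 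Hence the $n-2$ vertices outside $\{x,y\}$ have pairwise distinct red (equivalently blue) degrees. Applied with no doubling this already reproves $\mathrm{lir}(K_n)\geq 3$, and applied with one doubling it reduces the whole problem to a question about which red-degree multisets are simultaneously graphical and conflict-free on $K_n+\{e\}$.

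The plan for the lower bound $\mathcal{D}_{\mathrm{lir}}(K_n)\geq 2$ when $6\leq n\leq 10$ is then a finite realizability analysis built on this lemma. The $n-2$ ordinary vertices must occupy $n-2$ distinct values in $\{0,\dots,n-1\}$, which leaves almost no freedom; the only slack is at $x$ and $y$, whose colour degrees sum to $n$ rather than $n-1$. Writing down the constraints that $x$ (respectively $y$) imposes against each ordinary vertex $w$, namely $\deg_R(x)\neq\deg_R(w)$ across a red edge and $\deg_R(x)\neq\deg_R(w)+1$ across a blue edge, together with the condition coming from the doubled edge $xy$ itself, I would show that for each $n$ in this range no admissible assignment of the red-degree multiset exists. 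Because only boundedly many multisets and the two possible colours of $xy$ arise, this is a finite (if delicate) check. Isolating exactly the obstruction that vanishes at $n=11$ is, I expect, the main difficulty of the whole theorem.

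For the upper bounds I would argue constructively. A convenient base colouring of $K_n$ is the antiregular (threshold) one: order the vertices $v_1,\dots,v_n$ and colour $v_iv_j$ with $i<j$ red iff $i+j\leq n$. A direct degree computation shows this is a $2$-liec except for a single monochromatic conflict on the central edge $v_{\lfloor n/2\rfloor}v_{\lfloor n/2\rfloor+1}$, whose two endpoints share the degree $\lfloor n/2\rfloor$ in one colour. Two doublings always resolve this: they free up to four vertices from the distinctness constraint of the rigidity lemma, giving enough room to adjust the degrees of the two central vertices so that both colour classes become conflict-free, while one checks that no new collision is introduced. This yields $\mathcal{D}_{\mathrm{lir}}(K_n)\leq 2$ for all $n$.

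Finally, to obtain $\mathcal{D}_{\mathrm{lir}}(K_n)\leq 1$ for $n\geq 11$ I would exhibit a single-doubling colouring consistent with the rigidity lemma: the doubled edge is placed incident to one central vertex and that vertex's degree in the conflicting colour is rerouted to a value made safe by the colours of its incident edges, which is possible precisely when $n$ is large enough to supply the required slack (note that for odd $n$ the base sequence already exhausts $\{0,\dots,n-1\}$, so the reroute must exploit the degree $n$ available at an endpoint of the doubled edge, which is what forces the threshold upward). The two small orders $n=4,5$ fall outside this asymptotic pattern and I would dispose of them by explicit colourings of $K_4+\{e\}$ and $K_5+\{e\}$. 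The decisive and most delicate step remains the realizability dichotomy of the second paragraph, since it is exactly what separates the value $2$ from the value $1$ at $n=11$.
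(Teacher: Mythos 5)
Your rigidity lemma is correct and is a genuinely useful observation: in any $2$-liec of $K_n+\{xy\}$ with parallel edges colored alike, the $n-2$ vertices outside $\{x,y\}$ must have pairwise distinct red degrees, and the same argument with no doubling reproves $\mathrm{lir}(K_n)\geq 3$. Your threshold-coloring observation (color $v_iv_j$ red iff $i+j\leq n$, leaving a single conflict on the central edge) is also correct. But the proposal as written is a plan, not a proof, and the plan's hardest steps are exactly the ones you defer. The decisive claim -- that for $6\leq n\leq 10$ no single doubling admits a valid coloring, while at $n=11$ one does -- is never established; you yourself call it ``the main difficulty of the whole theorem.'' Moreover, your reduction of this step to an admissibility check on red-degree \emph{multisets} understates what is needed: the constraints $\deg_R(x)\neq\deg_R(w)$ (red edge) versus $\deg_R(x)\neq\deg_R(w)+1$ (blue edge) depend on the actual coloring, not only on degrees, so ruling out one doubling requires examining realizations (decompositions), not just degree sequences. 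This is precisely why the paper settles this step by an exhaustive computer search over all decompositions of $K_n$ ($n\leq 10$) into a locally irregular graph and a graph that becomes locally irregular after at most one doubling.

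The upper bounds are likewise asserted rather than constructed. ``Two doublings always resolve this'' is not automatic: on $K_7$, doubling the conflict edge itself, or a red edge from a central vertex to $v_1$ or $v_2$, merely shifts the collision to a neighboring pair (one must choose carefully, e.g.\ doubling the red edges $v_1v_3$ and $v_2v_5$ works), so a general argument or explicit constructions for each $6\leq n\leq 10$ is required. Similarly, your single-doubling construction for $n\geq 11$ is described only as ``rerouting slack'' at a central vertex, with no explicit coloring and no induction, and the cases $n=4,5$ are deferred to unspecified colorings. Compare this with the paper, which exhibits explicit colorings of $K_n+E_d$ for $n\in\{4,6,8,9\}$, derives $K_5,K_7,K_{10}$ from these by adding a vertex joined entirely by blue edges, gives an explicit coloring of $K_{11}+e$, and then extends it to all $n\geq 12$ by an induction (add $v_n$ with all incident edges red for even $n$, blue for odd $n$) together with a verification that no new conflicts arise. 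To turn your outline into a proof you would need to execute the finite nonexistence check for $6\leq n\leq 10$ (by hand this is very large; the rigidity lemma prunes it but does not collapse it) and supply the actual constructions in the three remaining regimes.
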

\begin{proof}
    The coloring of $K_n$ for $n \in \{4,6,8,9\}$ with one or two edges doubled is shown in Figure~\ref{fig:complete_graphs}.
    Colorings of $K_5$, $K_7$ and $K_{10}$ with one or two edges doubled may be obtained from the colorings of $K_4$, $K_6$ and $K_9$ shown in Figure~\ref{fig:complete_graphs} by adding a new vertex and coloring every edge incident to it blue.
    Using a computer program to check every possible decomposition of $K_n$ ($n \leq 10$) into two graphs, one of which is locally irregular and the other one can be made locally irregular by doubling at most one edge, we showed that it is not enough to double one edge whenever $6 \leq n \leq 10$; hence, two doubled edges are necessary in those cases.

    Let the vertices of $K_n$ be denoted by $v_1, \dots, v_n$.
    Let $e = v_1v_2$.
    In the following, we will consider the multigraph $K_n + e$.

    Consider now the coloring of $K_{11}+e$ shown in Figure~\ref{fig:K_11}.
    If $n \geq 12$, the coloring of $K_n+e$ is obtained from the coloring of $K_{n-1}+e$ by 
    coloring every edge incident to $v_n$ red if $n \equiv 0 \pmod{2}$, or blue if $n \equiv 1 \pmod{2}$.

    If $n$ is even then for every $v \in V(K_n+e) \setminus \{v_n\}$ we have $\deg_R(v) \geq 1$ and $\deg_B(v) \leq n-2$ (since every edge incident to $v_n$ is colored red, and there is not any blue doubled edge).
    If $n$ is odd then, similarly to the previous case, for every $v \in V(K_n+e) \setminus \{v_n\}$ we have $\deg_B(v) \geq 1$ and $\deg_R \leq n-2$ (if $n \geq 13$ then all edges incident to $v_n$ are blue, and end vertices of the double red edge are incident to at least five blue edges, for $n = 11$ see Figure~\ref{fig:K_11}).

    Hence, if there are some vertices $v_i$ and $v_j$ such that $\deg_H(v_i) = \deg_H(v_j)$ for $H \in \{R,B\}$, then $i,j \leq n-1$ and, from the construction of the coloring, it is easy to see that $\deg_H(v_i) = \deg_H(v_j)$ in $K_{n-1}+e$ as well.
    Since, however, the coloring of $K_{11}+e$ in Figure~\ref{fig:K_11} is locally irregular, the constructed colorings of $K_n+e$ for $n \geq 12$ are locally irregular too.

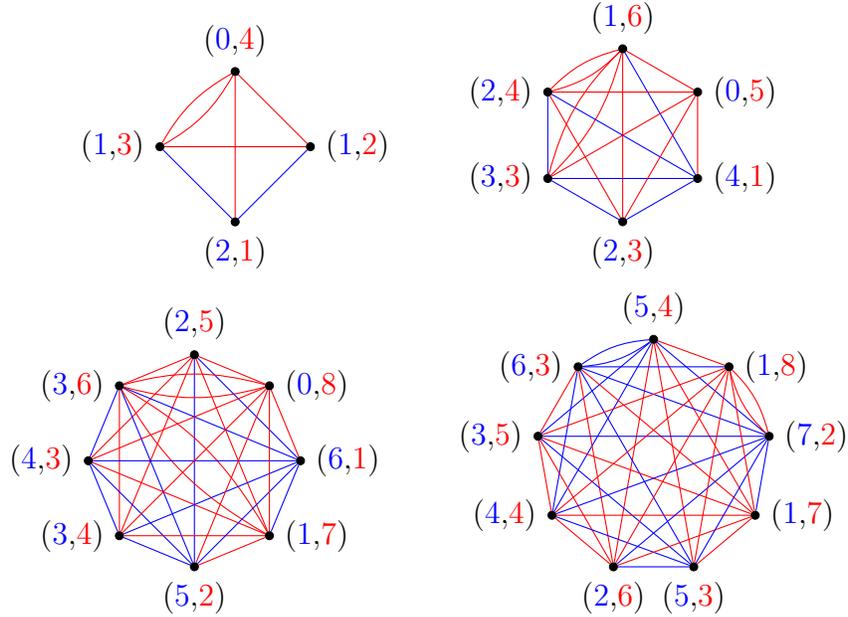
\begin{figure}[!ht]
    \centering

    \begin{tikzpicture}
    \begin{pgfonlayer}{nodelayer}
    \node [style={black_dot}, label={above:$ \color{black}{(}\color{blue}{0}\color{black}{,}\color{red}{4}\color{black}{)}$}] (0) at (90.00:1.00) {};
    \node [style={black_dot}, label={left:$ \color{black}{(}\color{blue}{1}\color{black}{,}\color{red}{3}\color{black}{)}$}] (1) at (180.00:1.00) {};
    \node [style={black_dot}, label={below:$ \color{black}{(}\color{blue}{2}\color{black}{,}\color{red}{1}\color{black}{)}$}] (2) at (270.00:1.00) {};
    \node [style={black_dot}, label={right:$ \color{black}{(}\color{blue}{1}\color{black}{,}\color{red}{2}\color{black}{)}$}] (3) at (360.00:1.00) {};
    \end{pgfonlayer}
    \begin{pgfonlayer}{edgelayer}
    \draw [style={red_edge}, bend right=15] (0) to (1);
    \draw [style={red_edge}, bend left=15] (0) to (1);
    \draw [style={red_edge}] (0) to (2);
    \draw [style={red_edge}] (0) to (3);
    \draw [style={red_edge}] (3) to (1);
    \draw [style={blue_edge}] (2) to (3);
    \draw [style={blue_edge}] (2) to (1);
    \end{pgfonlayer}
    \end{tikzpicture}
    \hspace{0.5cm}
    \begin{tikzpicture}
    \begin{pgfonlayer}{nodelayer}
    \node [style={black_dot}, label={above:$ \color{black}{(}\color{blue}{1}\color{black}{,}\color{red}{6}\color{black}{)}$}] (0) at (90.00:1.15) {};
    \node [style={black_dot}, label={left:$ \color{black}{(}\color{blue}{2}\color{black}{,}\color{red}{4}\color{black}{)}$}] (1) at (150.00:1.15) {};
    \node [style={black_dot}, label={left:$ \color{black}{(}\color{blue}{3}\color{black}{,}\color{red}{3}\color{black}{)}$}] (2) at (210.00:1.15) {};
    \node [style={black_dot}, label={below:$ \color{black}{(}\color{blue}{2}\color{black}{,}\color{red}{3}\color{black}{)}$}] (3) at (270.00:1.15) {};
    \node [style={black_dot}, label={right:$ \color{black}{(}\color{blue}{4}\color{black}{,}\color{red}{1}\color{black}{)}$}] (4) at (330.00:1.15) {};
    \node [style={black_dot}, label={right:$ \color{black}{(}\color{blue}{0}\color{black}{,}\color{red}{5}\color{black}{)}$}] (5) at (390.00:1.15) {};
    \end{pgfonlayer}
    \begin{pgfonlayer}{edgelayer}
    \draw [style={red_edge}, bend right=15] (0) to (1);
    \draw [style={red_edge}, bend left=15] (0) to (1);
    \draw [style={red_edge}, bend left=15] (0) to (2);
    \draw [style={red_edge}] (0) to (3);
    \draw [style={red_edge}] (3) to (1);
    \draw [style={blue_edge}] (2) to (3);
    \draw [style={blue_edge}] (2) to (1);
    \draw [style={blue_edge}] (2) to (4);
    \draw [style={blue_edge}] (4) to (3);
    \draw [style={blue_edge}] (4) to (1);
    \draw [style={blue_edge}] (4) to (0);
    \draw [style={red_edge}, bend left=15] (2) to (0);
    \draw [style={red_edge}] (5) to (2);
    \draw [style={red_edge}] (5) to (0);
    \draw [style={red_edge}] (5) to (1);
    \draw [style={red_edge}] (5) to (3);
    \draw [style={red_edge}] (5) to (4);
    \end{pgfonlayer}
    \end{tikzpicture}

    \begin{tikzpicture}
    \begin{pgfonlayer}{nodelayer}
    \node [style={black_dot}, label={above:$ \color{black}{(}\color{blue}{2}\color{black}{,}\color{red}{5}\color{black}{)}$}] (0) at (90.00:1.41) {};
    \node [style={black_dot}, label={left:$ \color{black}{(}\color{blue}{3}\color{black}{,}\color{red}{6}\color{black}{)}$}] (1) at (135.00:1.41) {};
    \node [style={black_dot}, label={left:$ \color{black}{(}\color{blue}{4}\color{black}{,}\color{red}{3}\color{black}{)}$}] (2) at (180.00:1.41) {};
    \node [style={black_dot}, label={left:$ \color{black}{(}\color{blue}{3}\color{black}{,}\color{red}{4}\color{black}{)}$}] (3) at (225.00:1.41) {};
    \node [style={black_dot}, label={below:$ \color{black}{(}\color{blue}{5}\color{black}{,}\color{red}{2}\color{black}{)}$}] (4) at (270.00:1.41) {};
    \node [style={black_dot}, label={right:$ \color{black}{(}\color{blue}{1}\color{black}{,}\color{red}{7}\color{black}{)}$}] (5) at (315.00:1.41) {};
    \node [style={black_dot}, label={right:$ \color{black}{(}\color{blue}{6}\color{black}{,}\color{red}{1}\color{black}{)}$}] (6) at (360.00:1.41) {};
    \node [style={black_dot}, label={right:$ \color{black}{(}\color{blue}{0}\color{black}{,}\color{red}{8}\color{black}{)}$}] (7) at (405.00:1.41) {};
    \end{pgfonlayer}
    \begin{pgfonlayer}{edgelayer}
    \draw [style={red_edge}] (0) to (1);
    \draw [style={red_edge}] (0) to (2);
    \draw [style={red_edge}] (0) to (3);
    \draw [style={red_edge}] (3) to (1);
    \draw [style={blue_edge}] (2) to (3);
    \draw [style={blue_edge}] (2) to (1);
    \draw [style={blue_edge}] (2) to (4);
    \draw [style={blue_edge}] (4) to (3);
    \draw [style={blue_edge}] (4) to (1);
    \draw [style={blue_edge}] (4) to (0);
    \draw [style={red_edge}] (5) to (2);
    \draw [style={red_edge}] (5) to (0);
    \draw [style={red_edge}, bend right=15] (5) to (1);
    \draw [style={red_edge}, bend left=15] (5) to (1);
    \draw [style={red_edge}] (5) to (3);
    \draw [style={red_edge}] (5) to (4);
    \draw [style={blue_edge}] (6) to (4);
    \draw [style={blue_edge}] (2) to (6);
    \draw [style={blue_edge}] (6) to (3);
    \draw [style={blue_edge}] (6) to (1);
    \draw [style={blue_edge}] (6) to (0);
    \draw [style={blue_edge}] (6) to (5);
    \draw [style={red_edge}] (7) to (6);
    \draw [style={red_edge}] (7) to (5);
    \draw [style={red_edge}] (7) to (4);
    \draw [style={red_edge}] (2) to (7);
    \draw [style={red_edge}] (7) to (3);
    \draw [style={red_edge}] (7) to (0);
    \draw [style={red_edge}, bend left=15] (7) to (1);
    \draw [style={red_edge}, bend right=15] (7) to (1);
    \end{pgfonlayer}
    \end{tikzpicture}
    \hspace{0.5cm}
    \begin{tikzpicture}
    \begin{pgfonlayer}{nodelayer}
    \node [style={black_dot}, label={above:$ \color{black}{(}\color{blue}{5}\color{black}{,}\color{red}{4}\color{black}{)}$}] (0) at (90.00:1.56) {};
    \node [style={black_dot}, label={left:$ \color{black}{(}\color{blue}{6}\color{black}{,}\color{red}{3}\color{black}{)}$}] (1) at (130.00:1.56) {};
    \node [style={black_dot}, label={left:$ \color{black}{(}\color{blue}{3}\color{black}{,}\color{red}{5}\color{black}{)}$}] (2) at (170.00:1.56) {};
    \node [style={black_dot}, label={left:$ \color{black}{(}\color{blue}{4}\color{black}{,}\color{red}{4}\color{black}{)}$}] (3) at (210.00:1.56) {};
    \node [style={black_dot}, label={below:$ \color{black}{(}\color{blue}{2}\color{black}{,}\color{red}{6}\color{black}{)}$}] (4) at (250.00:1.56) {};
    \node [style={black_dot}, label={below:$ \color{black}{(}\color{blue}{5}\color{black}{,}\color{red}{3}\color{black}{)}$}] (5) at (290.00:1.56) {};
    \node [style={black_dot}, label={right:$ \color{black}{(}\color{blue}{1}\color{black}{,}\color{red}{7}\color{black}{)}$}] (6) at (330.00:1.56) {};
    \node [style={black_dot}, label={right:$ \color{black}{(}\color{blue}{7}\color{black}{,}\color{red}{2}\color{black}{)}$}] (7) at (370.00:1.56) {};
    \node [style={black_dot}, label={right:$ \color{black}{(}\color{blue}{1}\color{black}{,}\color{red}{8}\color{black}{)}$}] (8) at (410.00:1.56) {};
    \end{pgfonlayer}
    \begin{pgfonlayer}{edgelayer}
    \draw [style={blue_edge}, bend right=15] (0) to (1);
    \draw [style={blue_edge}, bend left=15] (0) to (1);
    \draw [style={blue_edge}] (0) to (2);
    \draw [style={blue_edge}] (0) to (3);
    \draw [style={blue_edge}] (3) to (1);
    \draw [style={red_edge}] (2) to (3);
    \draw [style={red_edge}] (2) to (1);
    \draw [style={red_edge}] (2) to (4);
    \draw [style={red_edge}] (4) to (3);
    \draw [style={red_edge}] (4) to (1);
    \draw [style={red_edge}] (4) to (0);
    \draw [style={blue_edge}] (5) to (2);
    \draw [style={red_edge}] (5) to (0);
    \draw [style={blue_edge}] (5) to (1);
    \draw [style={blue_edge}] (5) to (3);
    \draw [style={blue_edge}] (5) to (4);
    \draw [style={red_edge}] (6) to (4);
    \draw [style={red_edge}] (2) to (6);
    \draw [style={red_edge}] (6) to (3);
    \draw [style={red_edge}] (6) to (1);
    \draw [style={red_edge}] (6) to (0);
    \draw [style={red_edge}] (6) to (5);
    \draw [style={blue_edge}] (7) to (6);
    \draw [style={blue_edge}] (7) to (5);
    \draw [style={blue_edge}] (7) to (4);
    \draw [style={blue_edge}] (2) to (7);
    \draw [style={blue_edge}] (7) to (3);
    \draw [style={blue_edge}] (7) to (0);
    \draw [style={blue_edge}] (7) to (1);
    \draw [style={blue_edge}] (8) to (1);
    \draw [style={red_edge}, bend left=15] (7) to (8);
    \draw [style={red_edge}, bend right=15] (7) to (8);
    \draw [style={red_edge}] (6) to (8);
    \draw [style={red_edge}] (8) to (5);
    \draw [style={red_edge}] (8) to (4);
    \draw [style={red_edge}] (8) to (3);
    \draw [style={red_edge}] (2) to (8);
    \draw [style={red_edge}] (8) to (0);
    \end{pgfonlayer}
    \end{tikzpicture}

    \caption{Locally irregular colorings of $K_n$ with at most two edges doubled, $n \in \{4,6,8,9\}$.}
    \label{fig:complete_graphs}
\end{figure}

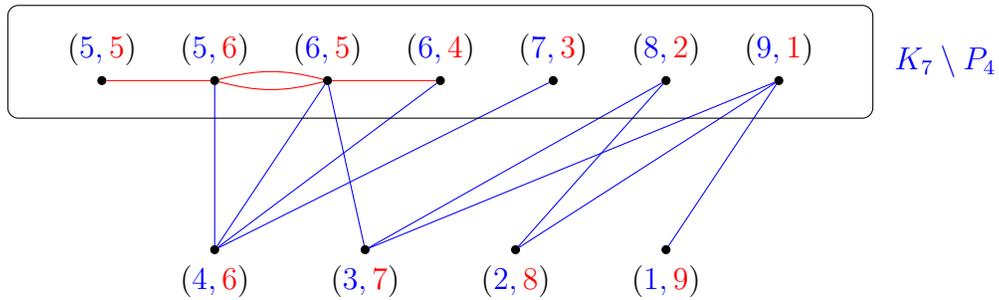
\begin{figure}[ht!]
    \centering
\begin{tikzpicture}
    \begin{pgfonlayer}{nodelayer}
    \node [style={black_dot}, label={above:($\color{blue}{5},\color{red}{5}$)}] (0) at (1.75, 0) {};
    \node [style={black_dot}, label={above:($\color{blue}{5},\color{red}{6}$)}] (1) at (3.25, 0) {};
    \node [style={black_dot}, label={above:($\color{blue}{6},\color{red}{5}$)}] (2) at (4.75, 0) {};
    \node [style={black_dot}, label={above:($\color{blue}{6},\color{red}{4}$)}] (3) at (6.25, 0) {};
    \node [style={black_dot}, label={above:($\color{blue}{7},\color{red}{3}$)}] (4) at (7.75, 0) {};
    \node [style={black_dot}, label={above:($\color{blue}{8},\color{red}{2}$)}] (5) at (9.25, 0) {};
    \node [style={black_dot}, label={above:($\color{blue}{9},\color{red}{1}$)}] (6) at (10.75, 0) {};
    \node [style={black_dot}, label={below:($\color{blue}{4},\color{red}{6}$)}] (9) at (3.25, -2.25) {};
    \node [style={black_dot}, label={below:($\color{blue}{3},\color{red}{7}$)}] (10) at (5.25, -2.25) {};
    \node [style={black_dot}, label={below:($\color{blue}{1},\color{red}{9}$)}] (11) at (9.25, -2.25) {};
    \node [style={black_dot}, label={below:($\color{blue}{2},\color{red}{8}$)}] (12) at (7.25, -2.25) {};
    \end{pgfonlayer}
    \begin{pgfonlayer}{edgelayer}
    \draw [style={red_edge}, bend left=15] (1) to (2);
    \draw [style={red_edge}, bend right=15] (1) to (2);
    \draw [style={red_edge}] (0) to (1);
    \draw [style={red_edge}] (2) to (3);
    \draw [style={blue_edge}] (11) to (6);
    \draw [style={blue_edge}] (5) to (12);
    \draw [style={blue_edge}] (12) to (6);
    \draw [style={blue_edge}] (10) to (6);
    \draw [style={blue_edge}] (10) to (5);
    \draw [style={blue_edge}] (10) to (2);
    \draw [style={blue_edge}] (9) to (4);
    \draw [style={blue_edge}] (9) to (3);
    \draw [style={blue_edge}] (9) to (2);
    \draw [style={blue_edge}] (9) to (1);
    \end{pgfonlayer}

    \draw[rounded corners] (0.5, -0.5) rectangle (12, 1) {};
    \node at (12,0.25) [label={right:$\color{blue} K_7 \setminus P_4$}] {};
\end{tikzpicture}
    \caption{Locally irregular coloring of $M_{11}$: all edges that are not displayed in $\color{blue} K_7 \setminus P_4$ are simple blue, and other omitted edges are simple red.}
    \label{fig:K_11}
\end{figure}

\end{proof}

A graph $G$ whose vertices can be partitioned into two disjoint sets $X$ and $Y$ such that $X$ is a clique and $Y$ is an independent set, is called \textit{split graph}.
Split graphs are, by definition, close to complete graphs. 
Hence, it is not a surprise that $\mathcal{D}_{\mathrm{lir}}(G)$ of any split graph $G$ is not greater than one, as it is the case for complete graphs (with finitely many exceptions).
To prove that $\mathcal{D}_\mathrm{lir}(G) \leq 1$ for every split graph $G$, we heavily use the results of Lintzmayer, Mota, and Sambinelli~\cite{Lintzmayer Mota Sambinelli}.
In~\cite{Lintzmayer Mota Sambinelli} it is precisely determined which split graphs admit 2-liec and which do not (see Theorem~1.5. and Theorem~3.1. in~\cite{Lintzmayer Mota Sambinelli}).
As a direct corollary of their result, we get the following lemma:
\begin{lem}\label{lemma_split}
    Let $G$ be a split graph different from a complete graph, with a maximum clique $X=\{v_1, \dots, v_n\}$ and an independent set $Y = V(G) \setminus X$ such that $\deg(v_1) \geq \dots \geq \deg(v_n)$.
    Let $d_i = |N(v_i) \cap Y|$ for each $i \in \{1, \dots, n\}$.
    Then $\mathcal{D}_{\mathrm{lir}}(G) \geq 1$ if one of the following is true:
    \begin{itemize}
        \item[$1)$] $G$ is a path of length three  
        \item[$2)$] $n \in \{4, \dots, 10\}$, $d_1 = 1$, $d_2=0$,
        \item[$3)$] $n \in \{6, \dots, 10\}$, $d_1 = 2$, $d_2=0$,
        \item[$4)$] $n \in \{8,9,10\}$, $d_1=3$, $d_2=0$, 
        \item[$5)$] $n \in \{6, 7, 8\}$, $d_1 = d_2= 1$, $d_3=0$,
        \item[$6)$] $n=10$, $d_1=4$, $d_2=0$,
        \item[$7)$] $n \geq 11$, $d_1 < \left\lfloor\tfrac{n}{2}\right\rfloor$, $d_2=0$.
    \end{itemize}
\end{lem}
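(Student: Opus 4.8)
The plan is to reduce the statement to the known characterization of $2$-liec-admitting split graphs due to Lintzmayer, Mota and Sambinelli. The starting point is the elementary equivalence noted already in the introduction: $\mathcal{D}_{\mathrm{lir}}(G)=0$ holds exactly when $G$ itself (that is, $G+\emptyset$) admits a $2$-liec, so $\mathcal{D}_{\mathrm{lir}}(G)\geq 1$ is \emph{equivalent} to the non-existence of a $2$-liec of $G$. Hence it suffices to verify that, in each of the seven listed regimes, $G$ has no $2$-liec, which is precisely what the classification in~\cite{Lintzmayer Mota Sambinelli} asserts once the relevant graphs are identified.

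First I would pin down the structure of $G$ forced by the hypotheses. Since $\deg(v_i)=(n-1)+d_i$ for each clique vertex, the ordering $\deg(v_1)\geq\dots\geq\deg(v_n)$ coincides with $d_1\geq\dots\geq d_n\geq 0$. Therefore the assumption $d_2=0$ forces $d_2=\dots=d_n=0$, so $Y$ consists of exactly $d_1$ pendant vertices all attached to $v_1$; that is, $G$ is $K_n$ with $d_1$ leaves hung on a single clique vertex. In case~5 the assumption $d_3=0$ analogously yields $K_n$ together with one leaf on $v_1$ and one leaf on $v_2$, while case~1 is the isolated graph $P_4$ (the split graph with clique $\{v_1,v_2\}$ and one leaf on each). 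In this way every item of the lemma names one explicit split graph (cases~1--6) or one explicit infinite family (case~7).

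Next I would match each of these graphs against Theorem~1.5 and Theorem~3.1 of~\cite{Lintzmayer Mota Sambinelli}, which together list exactly the split graphs failing to admit a $2$-liec. The finite items should correspond to the sporadic small exceptions in that list: regime by regime, one checks that the pair $(n,d_1)$, or $(n,d_1,d_2)$ in case~5, lies inside the tabulated ranges $n\in\{4,\dots,10\}$, etc. The infinite item~7 corresponds to the general threshold in~\cite{Lintzmayer Mota Sambinelli}: for $n\geq 11$ a clique $K_n$ carrying $d_1$ leaves on one vertex admits a $2$-liec precisely when $d_1$ is large enough to break the near-regularity of $K_n$, with the cut-off occurring at $d_1\geq\lfloor n/2\rfloor$; thus $d_1<\lfloor n/2\rfloor$ is exactly the non-colorable side. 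Since each listed $G$ lies on the non-$2$-liec side of the classification, we conclude $\mathcal{D}_{\mathrm{lir}}(G)\geq 1$.

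The main obstacle is the bookkeeping of this translation rather than any new combinatorial idea. One must check carefully that the degree normalization used in~\cite{Lintzmayer Mota Sambinelli} matches the parametrization by $d_i$ used here, that the boundary values of $n$ and $d_1$ fall on the correct side of each inequality (in particular the strict bound $d_1<\lfloor n/2\rfloor$ of case~7 against the $\leq$/$\geq$ conventions of the source), and that the degree-ordering hypothesis together with $d_2=0$ genuinely exhausts the split graphs in question, so that no $2$-liec-colorable graph is inadvertently swept into the list. Reconciling the threshold of case~7 with the fact that $\mathrm{lir}(K_n)=3$ for $n\geq 4$ is the one place where a short independent degree-counting check may be worthwhile.
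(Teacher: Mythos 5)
Your overall route coincides with the paper's: there, Lemma~\ref{lemma_split} is stated, without a separate proof, as a direct corollary of the classification of split graphs admitting a 2-liec in~\cite{Lintzmayer Mota Sambinelli} (Theorems~1.5 and~3.1 of that paper), combined with the trivial observation that $\mathcal{D}_{\mathrm{lir}}(G)\geq 1$ holds exactly when $G$ itself has no 2-liec. Your reduction, and your normalization ($\deg(v_i)=(n-1)+d_i$, so the degree order agrees with the order of the $d_i$, and $d_2=0$ forces every vertex of $Y$ to be a pendant neighbor of $v_1$), is precisely the intended reading of cases 1)--4), 6) and 7).

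There is, however, one concrete gap: your structural identification in case~5) is incomplete. The hypotheses $d_1=d_2=1$, $d_3=0$ are satisfied by \emph{two} non-isomorphic split graphs: $K_n$ with one leaf attached to $v_1$ and one leaf attached to $v_2$ (the graph you name), but also $K_n$ with a single vertex $y\in Y$ adjacent to both $v_1$ and $v_2$ (here $Y=\{y\}$; note that $X$ is still a maximum clique, since $y$ misses $v_3,\dots,v_n$ and $n\geq 6$, and the degree ordering hypothesis holds). Your argument, as written, says nothing about this second graph, so it does not establish the lemma for all graphs covered by item~5). That the sub-case is not vacuous is confirmed by the paper itself: in the proof of Theorem~\ref{theorem_split} the authors explicitly handle ``the second possibility in this case, when there is a common neighbor of $v_1$ and $v_2$ outside of the vertices of the clique'' by a separate construction. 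The repair stays entirely within your approach --- one must also check that the common-neighbor graph lies on the non-colorable side of the classification in~\cite{Lintzmayer Mota Sambinelli} --- but it has to be done; your own closing caveat about verifying that the hypotheses ``genuinely exhaust the split graphs in question'' is exactly the point at which the write-up falls short.
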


The previous lemma shows that to fully solve the problem of determining  $\mathcal{D}_\mathrm{lir}$ for split graphs, it is sufficient to consider only a few small split graphs and one, very specific, infinite family.

\begin{tw}\label{theorem_split}
    If $G$ is a split graph different from a complete graph, then $\mathcal{D}_{\mathrm{lir}}(G) \leq 1$.
\end{tw}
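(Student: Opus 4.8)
The plan is to prove $\mathcal{D}_{\mathrm{lir}}(G) \le 1$ for every non-complete split graph $G$ by combining the classification results of Lintzmayer, Mota, and Sambinelli with an explicit doubling argument for the exceptional families identified in Lemma~\ref{lemma_split}. By that classification, a non-complete split graph $G$ either already satisfies $\mathrm{lir}(G) \le 2$ — in which case $\mathcal{D}_{\mathrm{lir}}(G) = 0$ and there is nothing to prove — or $G$ falls into one of the seven cases (1)--(7) listed in Lemma~\ref{lemma_split}. Thus it suffices to exhibit, for each graph in those seven families, a single edge whose doubling produces a multigraph admitting a 2-liec with no two-colored multiedge.

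\emph{First} I would dispose of the finitely many sporadic cases, namely (1)--(6), which involve only split graphs with clique size $n \le 10$ and a very restricted neighborhood structure ($d_2 = 0$, so at most two vertices of the clique have neighbors in $Y$, and those neighborhoods are small). For these I would either display explicit colorings directly or, more systematically, verify them by the same exhaustive computer search already used in the proof of Theorem~\ref{thm_Kn}: for each such $G$ one checks that doubling one suitably chosen edge of the clique yields a decomposition into two locally irregular parts. Since $P_4$ (case 1) is handled by Theorem~\ref{paths} and the clique here is contained in a $K_n$ with $n \le 10$, these cases are genuinely finite and reduce to a bounded computation.

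\emph{The main work} is the single infinite family, case (7): $n \ge 11$, $d_2 = 0$, and $d_1 < \lfloor n/2 \rfloor$. Here the clique $X = K_n$ is large, exactly one clique vertex $v_1$ has neighbors in $Y$ (with $d_1$ of them), and all remaining vertices of $Y$ are isolated from the rest except through $v_1$. The idea is to build on the coloring of $K_{11}+e$ from Figure~\ref{fig:K_11} and the inductive extension from the proof of Theorem~\ref{thm_Kn}: color the clique $K_n$ (with one doubled clique edge) so that it has a 2-liec in which the red and blue degrees are all distinct and well-separated, and then attach the pendant edges from $v_1$ to $Y$ in whichever color keeps $v_1$'s degree in that color distinct from the degrees of its clique-neighbors. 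Because $d_1 < \lfloor n/2 \rfloor$, the pendant contributions are small enough that they cannot close the gap between $v_1$'s color-degree and that of its clique-neighbors, while the leaves in $Y$ have degree one and are trivially locally irregular against $v_1$.

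\emph{The hard part} will be controlling the degree-collision at the junction vertex $v_1$: adding $d_1$ pendant edges shifts $\deg_R(v_1)$ or $\deg_B(v_1)$, and one must guarantee this shifted value does not coincide with the corresponding color-degree of any adjacent clique vertex. I expect the inequality $d_1 < \lfloor n/2 \rfloor$ to be exactly the slack needed: by choosing the color of the pendant edges to be the one in which $v_1$ already has the extreme (largest or smallest) color-degree among clique vertices, the additional $d_1 \le \lfloor n/2 \rfloor - 1$ edges push $v_1$ further from its neighbors rather than into a tie. Verifying this requires tracking the explicit degree sequence produced by the $K_n+e$ construction and checking that the pendant vertices of $Y$ (all of degree one) never conflict — a routine but careful case analysis on the parity of $n$ and on whether the pendant edges are colored red or blue.
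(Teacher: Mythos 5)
Your overall reduction is sound and, for the unbounded family, your argument coincides with the paper's: for case (7) (and for $n\in\{4,5\}$ in case (2)) the paper takes the coloring of $K_n+e$ from Theorem~\ref{thm_Kn}, labels the vertex of maximum red degree as $v_1$, and attaches all $d_1$ pendant edges in red, exactly as you propose. Two small corrections there: the hypothesis $d_1 < \left\lfloor n/2\right\rfloor$ is not "the slack needed" at all --- it is only the condition from Lemma~\ref{lemma_split} under which a doubling is necessary; the construction works for every $d_1$ once the pendant edges go to the vertex whose red degree is the strict maximum. Also, of your "largest or smallest" options only the largest works: attaching pendant edges at a vertex of minimal color degree can push it into a tie with its neighbors.

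The genuine gap is in the sporadic cases with clique size $6 \le n \le 10$ (parts of cases (2)--(6)). There $\mathcal{D}_{\mathrm{lir}}(K_n)=2$ by Theorem~\ref{thm_Kn}, so your plan of "doubling one suitably chosen edge of the clique" cannot succeed on the clique alone; the pendant edges must absorb part of the irregularity, and your proposal never says how --- it only promises an unperformed computer search, which is precisely the assertion to be proved. The paper's missing idea is an \emph{un-doubling} trick: take the 2-liec of $K_n + v_1v_2 + e'$ with two doubled red edges from the proof of Theorem~\ref{thm_Kn} (Figure~\ref{fig:complete_graphs}), delete one copy of the parallel edge $v_1v_2$, and compensate by attaching red pendant edges at $v_1$ (and at $v_2$ in case (5)); since removing one parallel edge lowers $\deg_R(v_1)$ and $\deg_R(v_2)$ by exactly one, the pendant edges restore the original red degree sequence, so the coloring stays locally irregular while only one doubled edge remains. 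When $v_1$ and $v_2$ have a common neighbor in $Y$, one instead subdivides one of the two parallel edges $v_1v_2$, with the same effect. The residual subcases $n=8$, $d_1\in\{1,2,3\}$, $d_2=0$ resist this trick and the paper settles them by the explicit colorings of Figure~\ref{fig:split8}; some such ad hoc constructions (or an actually executed finite search) would be required to close your argument.
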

\begin{proof}
    From Theorem~\ref{paths} we have that $\mathcal{D}_\mathrm{lir}(P_4) = 1$.
    We deal with the other split graphs listed in Lemma~\ref{lemma_split} in the following.

    Let $G$ be a split graph with $\mathrm{lir}(G) > 2$.
    Denote by $v_1, \dots, v_n$ the vertices of the largest clique in $G$.
    We distinguish several cases, covering every possibility listed in Lemma~\ref{lemma_split}.
    We will heavily use the colorings obtained in the proof of Theorem~\ref{thm_Kn} (see Figure~\ref{fig:complete_graphs}).

    Suppose first that $\mathcal{D}_{\mathrm{lir}}(K_n) = 1$. 
    Then take any locally irregular coloring $\varphi$ in which the red degree of $v_1$ is the maximum possible.
    Attach $d_1$ pendant red edges to $v_1$. 
    Such a coloring of an obtained split graph is locally irregular, which completes the proof that $\mathcal{D}_\mathrm{lir}(G) \leq 1$ whenever $n \in \{4,5\}$ or $n \geq 11$ (see Lemma~\ref{lemma_split} and Theorem~\ref{thm_Kn}).
    
    Now, suppose that $n \in \{6,7,8\}$, $d_1 = d_2 = 1$ and $d_3 = 0$.
    Consider the coloring of $K_n$ obtained in the proof of Theorem~\ref{thm_Kn}, where the edge $v_1v_2$ is one of the doubled red edges.
    Replace two parallel edges $v_1v_2$ with a single red edge, and add two pendant red edges incident to $v_1$ and $v_2$.
    Clearly, the coloring of the resulting multigraph is a 2-liec.
    For the second possibility in this case, when there is a common neighbor of $v_1$ and $v_2$ outside of the vertices of the clique in $G$, simply subdivide one of the parallel edges $v_1v_2$ in the coloring obtained in the proof of Theorem~\ref{thm_Kn}.
    This completes the case.

    When $n\in \{6,7\}$, $d_1 \in \{1,2\}$ and $d_2 = 0$, consider the coloring of the multigraph $K_n + v_1v_2 + v_2v_3$ from the proof of Theorem~\ref{thm_Kn}, in which $\deg_B(v_1) = 1$ and $\deg_B(v_2) = 3$ if $n=6$, or $\deg_B(v_1) = 2$ and $\deg_B(v_2) = 4$ if $n=7$ (i.e. $v_1v_2$ is one of the doubled red edges).
    Replace two red parallel edges $v_1v_2$ with a single red edge and add $d_1$ pendant red edges incident to $v_1$. The resulting coloring is a 2-liec of $G+ v_1v_2 + v_2v_3$.

    If $n \in \{9,10\}$, consider the coloring of $K_n + v_1v_2 + v_3v_4$ defined in the proof of Theorem~\ref{thm_Kn} in which the parallel edges $v_1v_2$ are colored red and $\deg_R(v_1) = 8$.
    If the parallel edges $v_1v_2$ are replaced with a single red edge and $d_1$ red pendant edges incident to $v_1$ are added, we obtain a 2-liec of $G+v_1v_2 + v_3v_4$.
    This completes the cases when $n \in \{9,10\}$, $d_1 \in \{1, 2,3,4\}$ and $d_2 = 0$.

    The left-over cases, namely when $n = 8$, $d_1 \in \{ 1,2,3\}$ and $d_2 = 0$ are dealt with independently to the coloring of $K_8 + e_1 + e_2$ from the proof of Theorem~\ref{thm_Kn}. For 2-liecs of $G$ with one doubling in these cases see Figure~\ref{fig:split8}. This completes the proof of the theorem.
    
    \begin{figure}[h]
        \centering
        \begin{tikzpicture}[scale=0.8]
        \begin{pgfonlayer}{nodelayer}
        \node [style={black_dot}, label={[xshift=-19pt, yshift=-8pt]$\color{black}{(}\color{blue}{2}\color{black}{,}\color{red}{9}\color{black}{)}$}] (0) at (90.00:1.41) {};
        \node [style={black_dot}, label={left:$\color{black}{(}\color{blue}{4}\color{black}{,}\color{red}{3}\color{black}{)}$}] (1) at (135.00:1.41) {};
        \node [style={black_dot}, label={left:$\color{black}{(}\color{blue}{3}\color{black}{,}\color{red}{4}\color{black}{)}$}] (2) at (180.00:1.41) {};
        \node [style={black_dot}, label={below:$\color{black}{(}\color{blue}{3}\color{black}{,}\color{red}{5}\color{black}{)}$}] (3) at (225.00:1.41) {};
        \node [style={black_dot}, label={below:$\color{black}{(}\color{blue}{5}\color{black}{,}\color{red}{2}\color{black}{)}$}] (4) at (270.00:1.41) {};
        \node [style={black_dot}, label={below:$\color{black}{(}\color{blue}{1}\color{black}{,}\color{red}{6}\color{black}{)}$}] (5) at (315.00:1.41) {};
        \node [style={black_dot}, label={right:$\color{black}{(}\color{blue}{6}\color{black}{,}\color{red}{1}\color{black}{)}$}] (6) at (360.00:1.41) {};
        \node [style={black_dot}, label={right:$\color{black}{(}\color{blue}{0}\color{black}{,}\color{red}{7}\color{black}{)}$}] (7) at (405.00:1.41) {};
        \node [style={black_dot}, label={right:$\color{black}{(}\color{blue}{0}\color{black}{,}\color{red}{1}\color{black}{)}$}] (8) at (75.00:2.41) {};
        \node [style={black_dot}, label={above:$\color{black}{(}\color{blue}{0}\color{black}{,}\color{red}{1}\color{black}{)}$}] (9) at (90.00:2.41) {};
        \node [style={black_dot}, label={left:$\color{black}{(}\color{blue}{0}\color{black}{,}\color{red}{1}\color{black}{)}$}] (10) at (105.00:2.41) {};
        \end{pgfonlayer}
        \begin{pgfonlayer}{edgelayer}
        \draw [style={red_edge}] (1) to (0);
        \draw [style={red_edge}, bend right=15] (0) to (3);
        \draw [style={red_edge}] (0) to (2);
        \draw [style={red_edge}, bend right=15] (3) to (0);
        \draw [style={red_edge}] (2) to (3);
        \draw [style={blue_edge}] (3) to (1);
        \draw [style={blue_edge}] (1) to (2);
        \draw [style={blue_edge}] (2) to (4);
        \draw [style={blue_edge}] (4) to (3);
        \draw [style={blue_edge}] (4) to (0);
        \draw [style={blue_edge}] (4) to (1);
        \draw [style={red_edge}] (0) to (5);
        \draw [style={red_edge}] (5) to (1);
        \draw [style={red_edge}] (5) to (3);
        \draw [style={red_edge}] (5) to (4);
        \draw [style={red_edge}] (5) to (2);
        \draw [style={blue_edge}] (2) to (6);
        \draw [style={blue_edge}] (6) to (4);
        \draw [style={blue_edge}] (3) to (6);
        \draw [style={blue_edge}] (5) to (6);
        \draw [style={blue_edge}] (6) to (0);
        \draw [style={blue_edge}] (1) to (6);
        \draw [style={red_edge}] (7) to (4);
        \draw [style={red_edge}] (7) to (2);
        \draw [style={red_edge}] (1) to (7);
        \draw [style={red_edge}] (7) to (0);
        \draw [style={red_edge}] (3) to (7);
        \draw [style={red_edge}] (7) to (5);
        \draw [style={red_edge}] (6) to (7);
        \draw [style={red_edge}] (0) to (8);
        \draw [style={red_edge}] (0) to (9);
        \draw [style={red_edge}] (0) to (10);
        \end{pgfonlayer}
        \end{tikzpicture}%
        \begin{tikzpicture}[scale=0.8]
        \begin{pgfonlayer}{nodelayer}
        \node [style={black_dot}, label={[xshift=-19pt, yshift=-8pt]:$\color{black}{(}\color{blue}{2}\color{black}{,}\color{red}{8}\color{black}{)}$}] (0) at (90.00:1.41) {};
        \node [style={black_dot}, label={left:$\color{black}{(}\color{blue}{4}\color{black}{,}\color{red}{3}\color{black}{)}$}] (1) at (135.00:1.41) {};
        \node [style={black_dot}, label={left:$\color{black}{(}\color{blue}{3}\color{black}{,}\color{red}{4}\color{black}{)}$}] (2) at (180.00:1.41) {};
        \node [style={black_dot}, label={below:$\color{black}{(}\color{blue}{3}\color{black}{,}\color{red}{5}\color{black}{)}$}] (3) at (225.00:1.41) {};
        \node [style={black_dot}, label={below:$\color{black}{(}\color{blue}{5}\color{black}{,}\color{red}{2}\color{black}{)}$}] (4) at (270.00:1.41) {};
        \node [style={black_dot}, label={below:$\color{black}{(}\color{blue}{1}\color{black}{,}\color{red}{6}\color{black}{)}$}] (5) at (315.00:1.41) {};
        \node [style={black_dot}, label={right:$\color{black}{(}\color{blue}{6}\color{black}{,}\color{red}{1}\color{black}{)}$}] (6) at (360.00:1.41) {};
        \node [style={black_dot}, label={right:$\color{black}{(}\color{blue}{0}\color{black}{,}\color{red}{7}\color{black}{)}$}] (7) at (405.00:1.41) {};
        \node [style={black_dot}, label={right:$\color{black}{(}\color{blue}{0}\color{black}{,}\color{red}{1}\color{black}{)}$}] (8) at (80.00:2.41) {};
        \node [style={black_dot}, label={left:$\color{black}{(}\color{blue}{0}\color{black}{,}\color{red}{1}\color{black}{)}$}] (10) at (100.00:2.41) {};
        \end{pgfonlayer}
        \begin{pgfonlayer}{edgelayer}
        \draw [style={red_edge}] (1) to (0);
        \draw [style={red_edge}, bend right=15] (0) to (3);
        \draw [style={red_edge}] (0) to (2);
        \draw [style={red_edge}, bend right=15] (3) to (0);
        \draw [style={red_edge}] (2) to (3);
        \draw [style={blue_edge}] (3) to (1);
        \draw [style={blue_edge}] (1) to (2);
        \draw [style={blue_edge}] (2) to (4);
        \draw [style={blue_edge}] (4) to (3);
        \draw [style={blue_edge}] (4) to (0);
        \draw [style={blue_edge}] (4) to (1);
        \draw [style={red_edge}] (0) to (5);
        \draw [style={red_edge}] (5) to (1);
        \draw [style={red_edge}] (5) to (3);
        \draw [style={red_edge}] (5) to (4);
        \draw [style={red_edge}] (5) to (2);
        \draw [style={blue_edge}] (2) to (6);
        \draw [style={blue_edge}] (6) to (4);
        \draw [style={blue_edge}] (3) to (6);
        \draw [style={blue_edge}] (5) to (6);
        \draw [style={blue_edge}] (6) to (0);
        \draw [style={blue_edge}] (1) to (6);
        \draw [style={red_edge}] (7) to (4);
        \draw [style={red_edge}] (7) to (2);
        \draw [style={red_edge}] (1) to (7);
        \draw [style={red_edge}] (7) to (0);
        \draw [style={red_edge}] (3) to (7);
        \draw [style={red_edge}] (7) to (5);
        \draw [style={red_edge}] (6) to (7);
        \draw [style={red_edge}] (0) to (8);
        \draw [style={red_edge}] (0) to (10);
        \end{pgfonlayer}
        \end{tikzpicture}%
        \begin{tikzpicture}[scale=0.8]
        \begin{pgfonlayer}{nodelayer}
        \node [style={black_dot}, label={[xshift=-17pt, yshift=-6pt]$\color{black}{(}\color{blue}{3}\color{black}{,}\color{red}{6}\color{black}{)}$}] (1) at (90.00:1.41) {};
        \node [style={black_dot}, label={left:$\color{black}{(}\color{blue}{3}\color{black}{,}\color{red}{5}\color{black}{)}$}] (0) at (135.00:1.41) {};
        \node [style={black_dot}, label={left:$\color{black}{(}\color{blue}{4}\color{black}{,}\color{red}{3}\color{black}{)}$}] (2) at (180.00:1.41) {};
        \node [style={black_dot}, label={below:$\color{black}{(}\color{blue}{3}\color{black}{,}\color{red}{4}\color{black}{)}$}] (3) at (225.00:1.41) {};
        \node [style={black_dot}, label={below:$\color{black}{(}\color{blue}{5}\color{black}{,}\color{red}{2}\color{black}{)}$}] (4) at (270.00:1.41) {};
        \node [style={black_dot}, label={below:$\color{black}{(}\color{blue}{2}\color{black}{,}\color{red}{5}\color{black}{)}$}] (5) at (315.00:1.41) {};
        \node [style={black_dot}, label={right:$\color{black}{(}\color{blue}{6}\color{black}{,}\color{red}{1}\color{black}{)}$}] (6) at (360.00:1.41) {};
        \node [style={black_dot}, label={right:$\color{black}{(}\color{blue}{0}\color{black}{,}\color{red}{7}\color{black}{)}$}] (7) at (405.00:1.41) {};
        \node [style={black_dot}, label={right:$\color{black}{(}\color{blue}{0}\color{black}{,}\color{red}{1}\color{black}{)}$}] (10) at (90:2.41) {};
        \end{pgfonlayer}
        \begin{pgfonlayer}{edgelayer}
        \draw [style={red_edge}, bend right=15] (0) to (1);
        \draw [style={red_edge}, bend right=15] (1) to (0);
        \draw [style={blue_edge}] (2) to (1);
        \draw [style={red_edge}] (0) to (3);
        \draw [style={red_edge}] (3) to (1);
        \draw [style={blue_edge}] (2) to (3);
        \draw [style={red_edge}] (2) to (0);
        \draw [style={blue_edge}] (4) to (3);
        \draw [style={blue_edge}] (2) to (4);
        \draw [style={blue_edge}] (4) to (0);
        \draw [style={blue_edge}] (4) to (1);
        \draw [style={blue_edge}] (0) to (5);
        \draw [style={red_edge}] (5) to (2);
        \draw [style={red_edge}] (5) to (4);
        \draw [style={red_edge}] (5) to (3);
        \draw [style={red_edge}] (5) to (1);
        \draw [style={blue_edge}] (6) to (5);
        \draw [style={blue_edge}] (4) to (6);
        \draw [style={blue_edge}] (6) to (2);
        \draw [style={blue_edge}] (6) to (3);
        \draw [style={blue_edge}] (6) to (1);
        \draw [style={blue_edge}] (6) to (0);
        \draw [style={red_edge}] (7) to (5);
        \draw [style={red_edge}] (6) to (7);
        \draw [style={red_edge}] (7) to (3);
        \draw [style={red_edge}] (7) to (4);
        \draw [style={red_edge}] (2) to (7);
        \draw [style={red_edge}] (7) to (1);
        \draw [style={red_edge}] (0) to (7);
        \draw [style={red_edge}] (10) to (1);
        \end{pgfonlayer}
        \end{tikzpicture}
        \caption{2-liecs of the split graph $G$ when $n=8$, $d_1 \in \{ 1,2,3\}$ and $d_2 = 0$ from the proof of Theorem~\ref{theorem_split}.}
        \label{fig:split8}
    \end{figure}
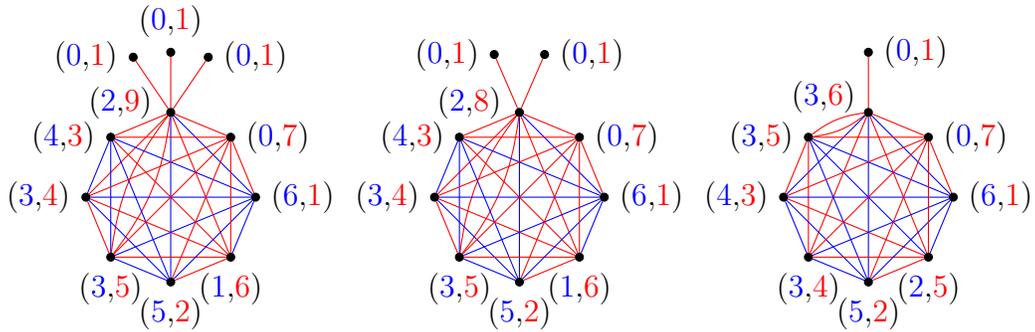
    
\end{proof}

\section{Powers of cycles}

In this section, we first present several notations that are specific to this part. 
Then we provide the general idea of the proof of the main result of this section -- Theorem~\ref{thm_powers_cycles}.
The proof is based on several lemmas, whose respective proofs are provided in the separate subsections of this section.

\subsection{Further notation and the general idea}

We start by defining almost irregular graphs, that are, in some sense, the opposite of regular graphs (see~\cite{Behzad Chartrand} and \cite{Chartrand}). 
While regular graphs have all their vertices of the same degree, almost irregular graphs are closest to having all vertices of distinct degrees.
Clearly, for $n \geq 2$, there is no graph of order $n$ with degree sequence $0,1,\dots, n-1$.
However, for each $n \geq 2$, there is a graph whose vertices have distinct degrees with the exception of a pair of vertices that have the same degrees.
There are some basic properties of almost irregular graphs, and it is a rather easy exercise to prove most of them. We list some of them in the following observation:
\begin{rem}\label{remark_AIG}
    Let $G$ be an almost irregular graph of order $n \geq 2$. 
    \begin{enumerate}
        \item If $G$ is connected then $G$ contains two vertices of degree $\left\lfloor \frac{n}{2} \right\rfloor$, and one vertex of each degree from $[1,n-1] \setminus \{ \left\lfloor \frac{n}{2} \right\rfloor \}$.
        \item  If $G$ is disconnected then $G$ contains two vertices of degree $\left\lfloor \frac{n-1}{2} \right\rfloor$, and one vertex of each degree from $[0,n-2] \setminus \{ \left\lfloor \frac{n-1}{2} \right\rfloor \}$.
        \item The complement of $G$ is an almost irregular graph of order $n$. 
        \item  There are exactly two non-isomorphic almost irregular graphs of order $n \geq 2$, one of them is connected and the second one is disconnected.
        \item\label{remark_AIG_con} If $G$ is connected, two vertices $u,v$ of $G$ are adjacent if and only if $\deg(u) + \deg(v) \geq n$.
        \item\label{remark_AIG_discon} If $G$ is disconnected, two vertices $u,v$ of $G$ are adjacent if and only if $\deg(u) + \deg(v) \geq n-1$.
    \end{enumerate}
\end{rem}

Almost irregular graphs are used in the respective proofs of this section as a base for constructing larger graphs.
For proving the main theorem of this section, it is important to be able to order the vertices of a graph, the vertices of an almost irregular graph in particular.
Hence, in some places, mainly in the constructive parts of the proofs, we use ordered graphs, i.e., graphs with a given ordering of vertices.
Such an ordering of vertices may be given explicitly or, more often, implicitly using the indices of vertices. 

From \ref{remark_AIG_con} and \ref{remark_AIG_discon} of Remark~\ref{remark_AIG} we have the following.
\begin{rem}\label{remark_ordering_degrees_AIG}
    The ordering of vertices of an ordered almost irregular graph $G$ is uniquely given by the ordering of elements of the multiset of vertices of $G$.
\end{rem}

When considering the orderings of vertices of almost irregular graphs, it is sufficient, according to Remark~\ref{remark_ordering_degrees_AIG}, to consider only the orderings of degrees of their vertices; and we do this using lists.
We recall some notation connected to lists.
By the length of a list $L$, we mean the number of (not necessarily unique) elements of $L$.
List $L'$ is a sublist of $L$ if it consists of some elements of $L$ and their order is preserved, i.e., if $L = l_1, \dots, l_n$ and $1 \leq n_1 \leq \cdots \leq n_k \leq n$, then $L'= l_{n_1}, \dots, l_{n_k}$ is a sublist of $L$. 
If $L_1 = l_1, \dots, l_k$ and $L_2 = l_{k+1}, \dots, l_m$ are two lists, the list $L_1 L_2 = l_1, \dots, l_m$ is a concatenation of $L_1$ and $L_2$.

Moreover, for the sake of shorter formulae, we use $[r,s]$ to denote the integer interval with bounds $r$ and $s$; i.e., $[r,s] = \{r, \dots, s\}$ if $r \leq s$ and $[r,s] = \emptyset$ if $r>s$.

As was partially mentioned before, we use almost irregular graphs as a base of building blocks from which we create a locally irregular subgraph $G$ of the $k$-th power of $C_n$, such that $C^k_n - G$ is also locally irregular.
To precisely define one of such building blocks, we use the following definition of $A(t,k)$. Note that, half-edges, i.e., edges incident to only one vertex, are used in this definition.

Let $t$ and $k$ be integers such that $t \geq k \geq 2$. 
By $A(t,k)$ we denote a graph with vertices $a_0, \dots, a_{t+1}$ which satisfies the following conditions \ref{con_a1}--\ref{con_a6}:
\begin{enumerate}[label=(a\arabic*)]
    \item\label{con_a1} $\deg_{A(t,k)}(a_0) + \deg_{A(t,k)}(a_{t+1}) \geq t+1$.
    \item\label{con_a2} $\{\deg_{A(t,k)}(a_i) \colon i \in [1, t]\} = [1,t]$.
    \item\label{con_a3} $a_0$ and $a_{t+1}$ are not incident to any half-edge, and each other vertex is incident to at most one half-edge.
    \item\label{con_a4} There is $\ell \in [0,\left\lfloor \frac{t}{2} \right\rfloor]$ such that $\{a_i \colon i \in [1, \ell] \cup [t-\ell+1, t]\}$ is the set of all vertices of $A(t,k)$ incident to half-edges.
    \item\label{con_a5} If $a_ia_j \in E(A(t,k))$ then $|i-j| \leq k$.
    \item\label{con_a6} If $a_ia_j \in E(A(t,k))$ and $i < t-\ell + 1 \leq j$ then $j - i \leq k - 1$.
\end{enumerate}

Note that graphs satisfying \ref{con_a1}--\ref{con_a6} may not be unique for fixed $k$ and $t$, and we will be interested only in the existence of such graphs. 
If there is a graph $A(t,k)$ for some parameters $t$ and $k$, we can use several copies of it to prove the following:
\begin{lem}\label{lemma_A(t,k)}
    Let $k$ and $t$ be integers such that $\frac{8k-5}{5} \geq t \geq k \geq 2$. 
    If there is a graph $A(t,k)$
    then $\mathrm{lir}(C_{p(t+1)+q(t+2)}^k) = 2$ for each nonnegative integers $p$ and $q$, such that $p+q \geq 2$.
\end{lem}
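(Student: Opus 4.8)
The plan is to first turn the statement $\mathrm{lir}(C_n^k)=2$ into a statement about a single spanning subgraph. Since $n=p(t+1)+q(t+2)\geq 2(t+1)\geq 2k+2$, the graph $C_n^k$ is $2k$-regular. For a $2k$-regular graph a red--blue edge coloring makes \emph{both} color classes locally irregular if and only if $\deg_R(u)\neq\deg_R(v)$ holds for every edge $uv$: indeed $\deg_B=2k-\deg_R$, so red degrees differ across an edge exactly when blue degrees do. Hence it suffices to produce a spanning subgraph $R\subseteq C_n^k$ (the red graph) for which the labeling $v\mapsto\deg_R(v)$ assigns distinct values to adjacent vertices of $C_n^k$. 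Writing the vertices cyclically, two are adjacent precisely when their cyclic distance is at most $k$, so this is equivalent to requiring that \emph{every window of $k+1$ consecutive vertices carries pairwise distinct values of $\deg_R$}.

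\textbf{The construction.} Next I would cut the cycle into $p+q$ consecutive arcs, $p$ of length $t+1$ and $q$ of length $t+2$, which is possible exactly because $n=p(t+1)+q(t+2)$. On each arc I place a copy of $A(t,k)$: its interior vertices $a_1,\dots,a_t$ become the $t$ interior cycle-vertices of the arc, while the end vertices $a_0,a_{t+1}$ serve as the \emph{connector} vertices shared with (respectively glued to) the neighboring arcs, the two block lengths corresponding to whether a connector is identified with its neighbor or not. The red graph $R$ is the union of the edges of all copies, with the half-edges of one copy completed by matching them against those of the adjacent copy; by condition~\ref{con_a4} each side offers exactly $\ell$ half-edges, so the matching is well defined. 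As the degree in $A(t,k)$ already counts half-edges, every interior vertex keeps, by condition~\ref{con_a2}, a red degree equal to its label in $\{1,\dots,t\}$, and by condition~\ref{con_a1} each connector receives red degree at least $t+1$.

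\textbf{Routine verifications.} Two easy checks come first. That $R$ is genuinely a subgraph of $C_n^k$: every edge inside a copy joins vertices at index-distance at most $k$ (condition~\ref{con_a5}), while every completed half-edge crosses a block boundary, where the sharpened bound $j-i\leq k-1$ of condition~\ref{con_a6} compensates for the connector sitting between the two arcs, keeping the cyclic distance at most $k$. That the connectors cause no conflict in any window: each has degree $\geq t+1$, strictly larger than every interior label, so it differs from all interior vertices sharing its window, and since two consecutive connectors bound an arc of length $\geq t+1>k$, no window can contain two connectors. Finally, any window lying inside a single arc is rainbow automatically, because the interior labels $1,\dots,t$ are pairwise distinct by condition~\ref{con_a2}.

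\textbf{The main obstacle.} The delicate case, and the heart of the argument, is a window straddling a block boundary that contains interior vertices of \emph{both} adjacent arcs. Here one must exclude a coincidence $\deg_R(a_{t-s})=\deg_R(a_{1+s'})$ between a right-end interior label of one copy and a left-end interior label of the next, for every pair with $s+s'\leq k-2$ (exactly those lying within cyclic distance $k$ across a shared connector). This is what conditions~\ref{con_a4} and~\ref{con_a6} are engineered to control: they confine the half-edge--bearing, hence boundary-adjacent, interior vertices to the symmetric end-zones $[1,\ell]\cup[t-\ell+1,t]$ and force the boundary-crossing edges to be short, so the labels appearing on the two sides of a boundary cannot collide within distance $k$. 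Verifying this for every admissible pair is precisely where the hypothesis $t\leq\frac{8k-5}{5}$ is used, since it bounds the zone size $\ell$ relative to $k$ and thereby keeps the two end-zone label sets compatible. Once all window types are settled, $v\mapsto\deg_R(v)$ properly colors $C_n^k$, and by the reformulation above this yields a $2$-liec, giving $\mathrm{lir}(C_n^k)=2$.
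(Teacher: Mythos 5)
Your reformulation (use $2k$-regularity to reduce to a single spanning red subgraph whose degree map properly colors $C_n^k$) and your treatment of the $p$ blocks of length $t+1$ (identify the two end vertices of consecutive copies, so the merged connector has red degree $\deg(a_0)+\deg(a_{t+1})\geq t+1$ by (a1)) coincide with the paper's proof. The genuine gap is in your handling of the $q$ blocks of length $t+2$, where you keep the two connectors $a_{t+1}^j$ and $a_0^{j+1}$ as separate adjacent vertices. For such a pair, (a1) bounds only the \emph{sum} of the two degrees, so your claim that ``each connector receives red degree at least $t+1$'' is false: each unidentified connector keeps only its own degree from $A(t,k)$, which lies in the range of the interior labels and can moreover equal the degree of the other connector. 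This is not a hypothetical worry: in the paper's own construction of $A(t,k)$ (proof of Lemma~\ref{lemma_no_half_edges}, case $t=4x$) one has $\deg(a_0)=|L_1|+|L_2|=3x=|L_2|+|L_3|=\deg(a_{t+1})$, so your two adjacent connectors would carry \emph{equal} red degree --- an immediate violation of local irregularity. Your argument also contradicts itself here: you assert that no window of $k+1$ consecutive vertices contains two connectors, yet at every unidentified boundary the two connectors are consecutive on the cycle. Since the lemma must be proved from properties (a1)--(a6) alone, and these properties admit graphs with $\deg(a_0)=\deg(a_{t+1})$, your construction cannot be repaired without additional input.

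The paper's proof avoids this by never leaving two connectors unidentified: it builds an auxiliary block $B(t,k)$ from $A(t,k)$ by inserting a new \emph{isolated} vertex (red degree $0$) at position $t-\ell+1$, i.e., just before the trailing half-edge zone, and then glues all $p+q$ blocks with identified connectors exactly as in the $t+1$ case. The insertion point is chosen so that the index shift does not break the edge-length bound $|i-j|\leq k$ (this is where (a6)'s sharpened bound $k-1$ is consumed, turning into property (b5)), the degree-$0$ vertex conflicts with nothing nearby, and two degree-$0$ vertices from distinct $B$-blocks are at distance at least $t+2$. A second, smaller inaccuracy in your write-up: the cross-boundary ``label collision'' you describe as the main obstacle is not controlled by (a4)/(a6) (those only ensure $G\subseteq C^k$); in the paper it is dispatched by the observation that equal degrees force equal positions within the respective copies, so same-degree vertices are automatically at distance at least $t+1\geq k+1$, with no use of $t\leq\frac{8k-5}{5}$ at that step.
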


In the proofs of the following two lemmas, the constructions of $A(t,k)$ for particular values of $t$ and $k$ are described, hence showing the existence of them. 
Then Lemma~\ref{lemma_A(t,k)} is used.
\begin{lem}\label{lemma_no_half_edges}
    Let $k,t,p,q$ be nonnegative integers such that $t \geq k \geq 2$, $p+q \geq 2$, 
    and
    \begin{align}\label{bound_t_fork}
        t \leq \left\{\begin{matrix*}[l]
            \frac{4k-2}{3} & \text{if } t \text{ is even,}\\
            \frac{4k-3}{3} & \text{if } t \text{ is odd.}
        \end{matrix*}\right.
    \end{align}
    Then $\mathrm{lir}(C_{p(t+1) + q(t+2)}^k) = 2$.
\end{lem}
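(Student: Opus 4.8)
The plan is to reduce everything to the existence of a suitable building block and then invoke Lemma~\ref{lemma_A(t,k)}. Concretely, I would first check that the hypothesis forces $t \le \frac{8k-5}{5}$: for even $t$ we have $t \le \frac{4k-2}{3}$, and since $5(4k-2) = 20k-10 \le 24k-15 = 3(8k-5)$ holds for every $k \ge 2$, we get $\frac{4k-2}{3} \le \frac{8k-5}{5}$; the odd bound $\frac{4k-3}{3}$ is even smaller. Together with the assumed $t \ge k \ge 2$, this means that as soon as we exhibit a graph $A(t,k)$ for the given $t$ and $k$, the inequalities $\frac{8k-5}{5} \ge t \ge k \ge 2$ required by Lemma~\ref{lemma_A(t,k)} are satisfied, and that lemma immediately yields $\mathrm{lir}(C^k_{p(t+1)+q(t+2)}) = 2$ for all $p+q \ge 2$. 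Thus the entire burden of the proof is to construct $A(t,k)$, and --- this being the point of the present regime --- to do so with no half-edges, i.e.\ with $\ell = 0$ in \ref{con_a4}.

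To build $A(t,k)$ I would place the prescribed internal degrees $1,2,\dots,t$ along the vertices $a_1,\dots,a_t$ in a \emph{unimodal} (tent-shaped) order --- the smallest degrees near the two ends $a_1, a_t$ and the largest degrees in the centre --- and join two internal vertices exactly when both are large enough and their indices are close, a local threshold rule of unit-interval type. The near-universal vertex of degree $t$ must see $t$ neighbours inside a window of width at most $2k$, so it has to sit centrally with the other high-degree vertices packed around it, while the low-degree vertices are pushed to the ends where the admissible window shrinks and small degrees are easy to realise. The endpoints $a_0$ and $a_{t+1}$ are then attached only to the nearest internal vertices (those within index distance $k$, and within $k-1$ in the case of $a_{t+1}$, as \ref{con_a6} demands) and are used to absorb whatever degree is still missing, so that $\deg(a_0)+\deg(a_{t+1}) \ge t+1$. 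I would treat even and odd $t$ separately, because the shape of the central plateau of the tent --- whether there is a single peak, or how the two middle degrees are seated --- differs with the parity of $t$, which is exactly the distinction appearing in the degree bound.

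It then remains to verify \ref{con_a1}--\ref{con_a6}. Conditions \ref{con_a3} and \ref{con_a4} are immediate with $\ell = 0$ (no half-edge is ever introduced), \ref{con_a2} holds by construction, and \ref{con_a1} is arranged by the choice of neighbours of $a_0$ and $a_{t+1}$. The heart of the verification --- and the step I expect to be the main obstacle --- is \ref{con_a5} together with \ref{con_a6}: one must check that in the unimodal layout \emph{every} forced edge, in particular the many edges incident to the degree-$t$, degree-$(t-1)$, \dots\ vertices, joins indices differing by at most $k$ (and by at most $k-1$ for edges reaching $a_{t+1}$). This is precisely where the bound $t \le \frac{4k-2}{3}$, respectively $t \le \frac{4k-3}{3}$, is consumed: it is the sharp threshold below which the central high-degree vertices can collect enough nearby neighbours without any edge spilling outside its length-$k$ window, and the parity-dependent form of the bound reflects the parity-dependent packing of the middle degrees. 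Once \ref{con_a5} and \ref{con_a6} are confirmed for the explicit graph, the construction is complete and Lemma~\ref{lemma_A(t,k)} finishes the proof.
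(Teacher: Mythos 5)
Your overall strategy coincides with the paper's: check $t \le \frac{8k-5}{5}$, construct a half-edge-free $A(t,k)$ (i.e.\ with $\ell = 0$ in \ref{con_a4}), and invoke Lemma~\ref{lemma_A(t,k)}. The arithmetic reduction at the start is correct, and the intended final shape of $A(t,k)$ --- internal degrees $1,\dots,t$ in a unimodal layout, with $a_0$ and $a_{t+1}$ attached to a prefix and a suffix of the internal vertices --- matches what the paper actually builds. But the proposal stops exactly where the proof has to start: you never define the graph. ``Join two internal vertices exactly when both are large enough and their indices are close, a local threshold rule of unit-interval type'' is not a construction; it leaves unspecified which pairs are joined, so nothing can be verified. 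In particular, you have not shown that \emph{any} graph on $a_1,\dots,a_t$ realizes the required degrees (condition \ref{con_a2} is asserted ``by construction'' for a construction that does not yet exist), and the two conditions you yourself identify as the heart of the matter, \ref{con_a5} and \ref{con_a6}, are precisely the ones you defer: saying that the bound $t \le \frac{4k-2}{3}$ (resp.\ $\frac{4k-3}{3}$) ``is consumed'' there is a statement of intent, not an argument. Since the whole content of the lemma beyond Lemma~\ref{lemma_A(t,k)} is this existence statement, this is a genuine gap.

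The paper closes the gap by not inventing the internal graph at all: it takes the \emph{disconnected almost irregular graph} $H_t$, whose degree multiset is $M_t = \{0,1,\dots,\lfloor\tfrac{t-1}{2}\rfloor, \lfloor\tfrac{t-1}{2}\rfloor,\dots,t-2\}$ and in which adjacency is completely determined by the threshold rule $\deg(u)+\deg(v)\ge t-1$ (Remark~\ref{remark_AIG}); this settles realizability for free. The unimodal order is then made explicit (odd degrees increasing, followed by even degrees decreasing), and two concrete claims do the work you postponed: Claim~\ref{claim_li_lj} shows that under this order the threshold rule only creates edges of index-span at most $k$ --- this is exactly where \eqref{bound_t_fork} is used, via inequalities such as $\tfrac{t}{2}-1 \le k-1$ and $j-i \le \tfrac{t-2}{2} \le k-1$ --- and Claim~\ref{claim_sublists_no_half_edges} produces a splitting $L = L_1L_2L_3$ with $|L_1|+|L_3| = \lceil t/2 \rceil$ and length bounds depending on $t \bmod 4$, so that attaching $a_0$ to the first $|L_1|+|L_2|$ vertices and $a_{t+1}$ to the last $|L_2|+|L_3|$ vertices (or the mirror-image choice, depending on the residue of $t$ modulo $4$) boosts the internal degrees to exactly $[1,t]$ while preserving \ref{con_a5} and \ref{con_a6}. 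Note also that the case analysis the construction genuinely needs is modulo $4$, not merely the even/odd split you anticipated. If you want to complete your write-up, the almost irregular graph is the missing ingredient that turns ``degrees determine edges'' into a rigorous, checkable adjacency rule.
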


\begin{lem}\label{lemma_half_edges}
    Let $k,t,p,q$ be nonnegative integers such that $k \geq 4$, $\frac{4k-1}{3} \leq t \leq \frac{8k-5}{5}$, and $p+q \geq 2$.
    Then $\mathrm{lir}(C_{p(t+1) + q(t+2)}) = 2$.
\end{lem}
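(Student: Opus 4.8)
The plan is to reduce Lemma~\ref{lemma_half_edges} to the existence of a graph $A(t,k)$ and then quote Lemma~\ref{lemma_A(t,k)}. First I would verify that the hypotheses transfer. Since $\frac{4k-1}{3}\ge k$ holds for every $k\ge 1$, the assumption $\frac{4k-1}{3}\le t\le\frac{8k-5}{5}$ gives $k\le t\le\frac{8k-5}{5}$, while $k\ge 4\ge 2$; moreover $\frac{4k-1}{3}\le\frac{8k-5}{5}$ is equivalent to $k\ge 3$, so the interval for $t$ is nonempty. Thus the numerical hypotheses $\frac{8k-5}{5}\ge t\ge k\ge 2$ of Lemma~\ref{lemma_A(t,k)} are met, and once a graph $A(t,k)$ is exhibited, Lemma~\ref{lemma_A(t,k)} yields $\mathrm{lir}(C_{p(t+1)+q(t+2)}^{k})=2$ for all nonnegative $p,q$ with $p+q\ge 2$, which is the assertion. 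Hence the whole content of the lemma is the construction of $A(t,k)$ in the regime $\frac{4k-1}{3}\le t\le\frac{8k-5}{5}$, complementing the half-edge-free construction of Lemma~\ref{lemma_no_half_edges}, which stops at $t\approx\frac{4k-2}{3}$.

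For the construction I would build a band-limited graph on $a_0,\dots,a_{t+1}$ whose interior vertices $a_1,\dots,a_t$ realize the degree multiset $[1,t]$ required by \ref{con_a2}. Conditions \ref{con_a5} and \ref{con_a6} say precisely that the graph embeds into the $k$-th power of a path (bandwidth at most $k$, and at most $k-1$ across the boundary index $t-\ell+1$), so realizing the degrees becomes an interval/bandwidth degree-realization problem, with the structure of almost irregular graphs from Remark~\ref{remark_AIG} serving as the backbone of the band and with $a_0,a_{t+1}$ adjoined to absorb the largest interior degrees, as reflected in \ref{con_a1}. The essential new feature of this regime is that, once $t$ passes the threshold of Lemma~\ref{lemma_no_half_edges}, the bandwidth cap prevents the extreme interior vertices from realizing their prescribed degrees using only in-block edges; I would therefore attach one half-edge to each of the $\ell$ leftmost and $\ell$ rightmost interior vertices, as \ref{con_a3} and \ref{con_a4} permit, with $\ell$ chosen as an explicit function of $t$ and $k$. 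These half-edges encode exactly the degree contributions that the cyclic tiling of Lemma~\ref{lemma_A(t,k)} will later complete by edges to a neighbouring copy, and the built-in symmetry $a_i\leftrightarrow a_{t+1-i}$ keeps the half-edge zones balanced at the two ends.

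The main obstacle is the exact degree bookkeeping: confirming that the chosen edge set together with the $2\ell$ half-edges realizes the multiset $[1,t]$ \emph{precisely} while never violating \ref{con_a5} or the stricter boundary bound \ref{con_a6}, and simultaneously pinning down $\ell$ so that \ref{con_a4}'s requirement $\ell\le\lfloor t/2\rfloor$ holds. I expect the upper bound $t\le\frac{8k-5}{5}$ to be exactly the inequality guaranteeing $\ell\le\lfloor t/2\rfloor$, i.e.\ that the two half-edge zones stay disjoint and inside the interior, whereas the lower bound $\frac{4k-1}{3}\le t$ forces $\ell\ge 1$, so that half-edges are genuinely needed. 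The verification is most delicate for the vertices carrying degrees close to $t$, where the band is saturated, and for the two vertices flanking the boundary index $t-\ell+1$ governed by \ref{con_a6}; I would dispose of the small residual cases (values of $t$ near either endpoint of the interval, and the parity of $t$) separately, and then close the argument by invoking Lemma~\ref{lemma_A(t,k)} exactly as above.
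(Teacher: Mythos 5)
Your opening reduction is sound and coincides with the paper's strategy: you correctly check that $\frac{4k-1}{3}\ge k$ and that the interval for $t$ is nonempty for $k\ge 3$, so once a graph $A(t,k)$ exists, Lemma~\ref{lemma_A(t,k)} finishes the job. But, as you yourself say, "the whole content of the lemma is the construction of $A(t,k)$" in this regime --- and that construction is exactly what your proposal does not carry out. Everything after the reduction is phrased as intention and expectation ("I would build\dots", "I would therefore attach\dots", "I expect the upper bound\dots"): you never specify $\ell$, never give the ordering of the interior degrees, never prove the bandwidth property, and never do the degree bookkeeping that you correctly identify as the main obstacle. The paper's proof is almost entirely that bookkeeping: it introduces four parameters $s_1=t+\lceil\tfrac{t-1}{2}\rceil-2k+1$, $s_2=k-\lceil\tfrac{t-1}{2}\rceil$, $s_3=2k-1-t$, $s_4=k-1-\lceil\tfrac{t-1}{2}\rceil$, arranges the degree multiset of the disconnected almost irregular graph $H_t$ into a block-interleaved list $L=L_1L_2L_3L_4\,0\,L_5$ (alternating runs of high and low degrees --- crucially \emph{not} the odd-increasing/even-decreasing ordering used in Lemma~\ref{lemma_no_half_edges}, which would violate the bandwidth bound in this range of $t$), proves a separate claim (Claim~\ref{claim_i_minus_j}) that degree sums $\ge t-1$ force index distance $\le k$, attaches $a_0$ to the first $s_1+s_2+s_3=k$ vertices and $a_{t+1}$ to vertices $s_1+s_2+1$ through $s_1+s_2+s_3+s_4$, places $\ell=s_1$ half-edges at each end, and then verifies \ref{con_a1}--\ref{con_a6} one by one. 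None of this structure can be recovered from your sketch; in particular the choice of ordering is the genuinely new idea here, and your proposal does not acknowledge that a new ordering is even needed.

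Moreover, at least one of your stated expectations is false, which shows the heuristics cannot simply be "filled in". You claim the lower bound $\frac{4k-1}{3}\le t$ forces $\ell\ge 1$, "so that half-edges are genuinely needed". In the paper's construction $\ell=s_1=t+\lceil\tfrac{t-1}{2}\rceil-2k+1$, and for $k=4$ the only admissible value is $t=5$, giving $\ell=5+2-8+1=0$: no half-edges at all at the boundary of the range (the paper only asserts $s_1$ is \emph{nonnegative}, while $s_2,s_3,s_4$ are positive). Your expectation about the upper bound is closer to the truth --- $t\le\frac{8k-5}{5}$ is indeed what gives $s_1\le\lfloor\tfrac{t}{2}\rfloor$ in the verification of \ref{con_a4} --- but that same bound is also needed inside the proof of Claim~\ref{claim_i_minus_j} (to ensure the last element of $L_1$ and the first element of $L_5$ have degree sum below $t-1$), so even there the role of the hypothesis is richer than your guess. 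In summary: correct reduction, correct general shape, but the proof proper (explicit construction plus verification of \ref{con_a1}--\ref{con_a6}) is missing, and the gap is not routine.
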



Using Lemma~\ref{lemma_no_half_edges} and Lemma~\ref{lemma_half_edges}, and solving a small number of particular cases, we prove the main theorem of this section:
\begin{tw}\label{thm_powers_cycles}
    $\mathrm{lir}(C_n^k) = 2$ for each $k \geq2$ and $n \geq 2k+2$.
\end{tw}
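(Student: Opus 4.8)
The plan is to prove the two inequalities $\mathrm{lir}(C_n^k)\ge 2$ and $\mathrm{lir}(C_n^k)\le 2$ separately, with essentially all the work in the second. For the lower bound, observe that when $n\ge 2k+2$ every vertex of $C_n^k$ has exactly $k$ neighbours on each side, so $C_n^k$ is $2k$-regular; a regular graph with at least one edge is not locally irregular, hence $\mathrm{lir}(C_n^k)\ge 2$. For the upper bound I would reduce everything to a counting statement: by Lemma~\ref{lemma_no_half_edges} and Lemma~\ref{lemma_half_edges} it suffices to write $n=p(t+1)+q(t+2)$ for nonnegative integers $p,q$ with $p+q\ge 2$ and some integer $t$ in the window $k\le t\le T$, where $T=\left\lfloor\frac{8k-5}{5}\right\rfloor$. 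I would first check that these two lemmas jointly cover every admissible $t$: comparing $3t$ with $4k$, each integer $t\in[k,T]$ satisfies either $t\le\frac{4k-2}{3}$ (if even) or $t\le\frac{4k-3}{3}$ (if odd), so Lemma~\ref{lemma_no_half_edges} applies, or else $\frac{4k-1}{3}\le t\le\frac{8k-5}{5}$, so Lemma~\ref{lemma_half_edges} applies (this branch needs $k\ge 4$). There is no gap between the two ranges: the only integer that could fall between them is $t=\frac{4k-2}{3}$, and this is an integer precisely when $k\equiv 2\pmod 3$, say $k=3s+2$, in which case $t=4s+2$ is even and hence is still handled by Lemma~\ref{lemma_no_half_edges}.

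Next I would run the covering argument. Writing $m=p+q$ and $q\in[0,m]$, the decomposition $n=p(t+1)+q(t+2)$ is equivalent to $m(t+1)\le n\le m(t+2)$. For a fixed $m\ge 2$, the intervals $[m(t+1),m(t+2)]$ for consecutive values of $t$ share the endpoint $m(t+2)=m((t+1)+1)$, so letting $t$ range over $[k,T]$ yields the single contiguous block $W_m=[\,m(k+1),\,m(T+2)\,]$. I would then show that consecutive blocks meet: $W_m$ and $W_{m+1}$ overlap (or are adjacent) as soon as $(m+1)(k+1)\le m(T+2)+1$, whose tightest instance is $m=2$ and which amounts to $T\ge\frac{3k-2}{2}$; since $T=\left\lfloor\frac{8k-5}{5}\right\rfloor$ this holds for all sufficiently large $k$. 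Consequently $\bigcup_{m\ge 2}W_m=[\,2(k+1),\infty\,)=[\,2k+2,\infty\,)$, so every $n\ge 2k+2$ admits the required representation with an admissible $t$, and $\mathrm{lir}(C_n^k)=2$ follows.

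It then remains to clear the finitely many configurations where this asymptotic tiling degrades. Small $k$ must be handled directly: for $k=2$ only $t=2$ is admissible, but $\bigcup_{m\ge 2}[3m,4m]=[6,\infty)$ already covers all $n\ge 2k+2=6$; for $k=3$ only $t=3$ is available, giving $n=4p+5q$, which represents every $n\ge 8$ except $n=11$, so the single multigraph $C_{11}^3$ would be coloured by an explicit (or computer-verified) $2$-liec; for $k=4$ the window is $t\in\{4,5\}$ and a short check shows $W_2=[10,14]$, $W_3=[15,21]$, $\dots$ tile $[10,\infty)$. A few further low-$n$ instances for small $k$, where the floor in $T$ shortens $W_2$ enough to leave a short initial gap, are disposed of in the same ad hoc manner.

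The main obstacle is reconciling the integrality constraints at the low end $n\approx 2k+2$, where only $m=2$ (and perhaps $m=3$) are in play and there is little slack. The value $t\approx n/m$ forced by the counting must simultaneously be an integer inside $[k,T]$, obey the bound separating Lemma~\ref{lemma_no_half_edges} from Lemma~\ref{lemma_half_edges}, and respect the $k\ge 4$ hypothesis of the latter; the floor in $T=\left\lfloor\frac{8k-5}{5}\right\rfloor$ can shorten a window just enough to open a gap that the clean inequality hides. The careful bookkeeping needed to settle these boundary residues of $n$ and the first few values of $k$ is the delicate part, whereas all the genuinely structural content — the block constructions built from the graphs $A(t,k)$ — is already packaged inside Lemma~\ref{lemma_A(t,k)}, Lemma~\ref{lemma_no_half_edges} and Lemma~\ref{lemma_half_edges}.
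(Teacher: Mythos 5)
Your proposal is correct and follows essentially the same route as the paper: the same reduction via Lemma~\ref{lemma_A(t,k)}, Lemma~\ref{lemma_no_half_edges} and Lemma~\ref{lemma_half_edges}, the same check that the two lemmas jointly cover every $t\in[k,\lfloor\frac{8k-5}{5}\rfloor]$ (ruling out the potential gap at $t=\frac{4k-2}{3}$ by parity), and the same interval-union covering argument — your blocks $W_m$ are exactly the paper's sets $\bigcup_t I(r,t)$ — with the small cases $k=2,3$ (including the explicit coloring of $C_{11}^3$) and the boundary checks for $k\in\{4,\dots,7\}$ handled just as the paper does. The only additions are cosmetic: you spell out the trivial lower bound $\mathrm{lir}(C_n^k)\ge 2$ from $2k$-regularity, which the paper leaves implicit.
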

\begin{proof}
    In the following, we heavily use the fact that for coprimes $a$ and $b$, every integer $n \geq (a-1)(b-1)$ can be expressed in the form $n = pa + qb$ where $p$ and $q$ are nonnegative integers (see coin problem~\cite{Sylvester}).
    In particular, for $k = 2$, Lemma~\ref{lemma_no_half_edges} implies that $\mathrm{lir}(C_{3p+4q}^2) = 2$ (for $t=2$). 
    Since every integer $n \geq 6$ can be expressed in the form $n = 3p + 4q$ for some nonnegative integers $p$ and $q$, $p+q \geq 2$, we get that $\mathrm{lir}(C_n^2) = 2$ for $n \geq 6$.

    In the case when $k=3$, Lemma~\ref{lemma_no_half_edges} yields that $\mathrm{lir}(C_{4p+5q}^3) = 2$ (for $t = 3$) for each pair of nonnegative integers $p$ and $q$ such that $p+q \geq 2$.
    Each $n \geq 12$ can be expressed in a form $n = 4p + 5q$ for suitable integers $p$ and $q$.
    Moreover, it is easy to check that also each $n \in \{8,9,10\}$ can be expressed in the same form.
    For $n=11$, the decomposition of $C_{11}^3$ is explicitly provided in Figure~\ref{fig:C_11^3_decomp}, which completes the proof of the theorem in the case when $k = 3$.

    \begin{figure}
        \centering
        \begin{tikzpicture}
        	\begin{pgfonlayer}{nodelayer}
        		\node [style={black_dot}, label={above:$\mathcolor{black}{(} \mathcolor{blue}{5} \mathcolor{black}{,} \mathcolor{red}{1} \mathcolor{black}{)}$}] (0) at (0, 1.5) {};
        		\node [style={black_dot}, label={135:$\mathcolor{black}{(} \mathcolor{blue}{0} \mathcolor{black}{,} \mathcolor{red}{6} \mathcolor{black}{)}$}] (1) at (-0.75, 1.25) {};
        		\node [style={black_dot}, label={left:$\mathcolor{black}{(} \mathcolor{blue}{2} \mathcolor{black}{,} \mathcolor{red}{4} \mathcolor{black}{)}$}] (2) at (-1.25, 0.75) {};
        		\node [style={black_dot}, label={left:$\mathcolor{black}{(} \mathcolor{blue}{4} \mathcolor{black}{,} \mathcolor{red}{2} \mathcolor{black}{)}$}] (3) at (-1.5, 0) {};
        		\node [style={black_dot}, label={left:$\mathcolor{black}{(} \mathcolor{blue}{1} \mathcolor{black}{,} \mathcolor{red}{5} \mathcolor{black}{)}$}] (4) at (-1.25, -0.75) {};
        		\node [style={black_dot}, label={270:$\mathcolor{black}{(} \mathcolor{blue}{3} \mathcolor{black}{,} \mathcolor{red}{3} \mathcolor{black}{)}$}] (5) at (-0.5, -1.25) {};
        		\node [style={black_dot}, label={270:$\mathcolor{black}{(} \mathcolor{blue}{5} \mathcolor{black}{,} \mathcolor{red}{1} \mathcolor{black}{)}$}] (6) at (0.5, -1.25) {};
        		\node [style={black_dot}, label={right:$\mathcolor{black}{(} \mathcolor{blue}{0} \mathcolor{black}{,} \mathcolor{red}{6} \mathcolor{black}{)}$}] (7) at (1.25, -0.75) {};
        		\node [style={black_dot}, label={right:$\mathcolor{black}{(} \mathcolor{blue}{4} \mathcolor{black}{,} \mathcolor{red}{2} \mathcolor{black}{)}$}] (8) at (1.5, 0) {};
        		\node [style={black_dot}, label={right:$\mathcolor{black}{(} \mathcolor{blue}{3} \mathcolor{black}{,} \mathcolor{red}{3} \mathcolor{black}{)}$}] (9) at (1.25, 0.75) {};
        		\node [style={black_dot}, label={45:$\mathcolor{black}{(} \mathcolor{blue}{1} \mathcolor{black}{,} \mathcolor{red}{5} \mathcolor{black}{)}$}] (10) at (0.75, 1.25) {};
        	\end{pgfonlayer}
        	\begin{pgfonlayer}{edgelayer}
        		\draw [style={blue_edge}] (0) to (2);
        		\draw [style={blue_edge}] (2) to (3);
        		\draw [style={blue_edge}] (3) to (0);
        		\draw [style={blue_edge}] (0) to (10);
        		\draw [style={blue_edge}] (0) to (9);
        		\draw [style={blue_edge}] (0) to (8);
        		\draw [style={blue_edge}] (9) to (6);
        		\draw [style={blue_edge}] (6) to (7);
        		\draw [style={blue_edge}] (7) to (8);
        		\draw [style={blue_edge}] (5) to (8);
        		\draw [style={blue_edge}] (5) to (6);
        		\draw [style={blue_edge}] (3) to (6);
        		\draw [style={blue_edge}] (5) to (4);
        		\draw [style={blue_edge}] (4) to (6);
        		\draw [style={red_edge}] (1) to (0);
        		\draw [style={red_edge}] (1) to (2);
        		\draw [style={red_edge}] (1) to (3);
        		\draw [style={red_edge}] (1) to (4);
        		\draw [style={red_edge}] (1) to (10);
        		\draw [style={red_edge}] (1) to (9);
        		\draw [style={red_edge}] (2) to (10);
        		\draw [style={red_edge}] (2) to (4);
        		\draw [style={red_edge}] (2) to (5);
        		\draw [style={red_edge}] (3) to (4);
        		\draw [style={red_edge}] (3) to (5);
        		\draw [style={red_edge}] (4) to (7);
        		\draw [style={red_edge}] (5) to (7);
        		\draw [style={red_edge}] (6) to (8);
        		\draw [style={red_edge}] (8) to (9);
        		\draw [style={red_edge}] (8) to (10);
        		\draw [style={red_edge}] (9) to (10);
        		\draw [style={red_edge}] (10) to (7);
        		\draw [style={red_edge}] (7) to (9);
        	\end{pgfonlayer}
        \end{tikzpicture}
        \caption{2-liec of $C_{11}^3$.}
        \label{fig:C_11^3_decomp}
    \end{figure}
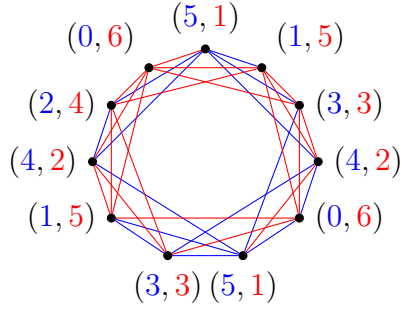

    Hence, in the following, assume that $k \geq 4$; in this case we can use Lemma~\ref{lemma_half_edges} on top of Lemma~\ref{lemma_no_half_edges}.

    Let $t$ be an integer, $k \leq t \leq \frac{8k-5}{5}$. 
    If $t$ is even, then $t$ satisfies $t \leq \frac{4k-2}{3}$ or $t \geq \frac{4k-1}{3}$.
    If $t$ is odd, then $t$ satisfies $t \leq \frac{4k-3}{3}$ or $t \geq \frac{4k-2}{3}$.
    However, if $t$ is odd then clearly $t \neq \frac{4k-2}{3}$ (as $3t$ is odd and $4k-2$ is even).
    Hence, $t \geq \frac{4k-2}{3}$ implies $t \geq \frac{4k-1}{3}$ in the case when $t$ is odd.
    It follows from these simple observations, Lemma~\ref{lemma_no_half_edges}, and Lemma~\ref{lemma_half_edges} that $\mathrm{lir}(C_{p(t+1)+q(t+2)}) = 2$ for every nonnegative integers $t$, $p$, and $q$ such that $k \leq t \leq \frac{8k-5}{5}$ and $p + q \geq 2$.

    Hence, in the following, we focus on showing that each $n \geq 2k + 2$ can be written in the form $n=p(t+1) + q(t+2)$ for some nonnegative integers $p$ and $q$, $p + q \geq 2$, and some $t$ satisfying $k \leq t \leq \frac{8k-5}{5}$.

    Let $I(r,t) = \{p(t+1)+q(t+2) \colon p+q=r\}$.
    Observe that $p(t+1)+q(t+2)$ and $(p-1)(t+1) + (q+1)(t+2)$ are consecutive integers. 
    Thus, $I(r,t)$ is an integer interval for each $r$ and $t$.
    
    Moreover, $r(t+2) \in I(r,t) \cap I(r, t+1)$, and 
    thus $\bigcup\limits_{t=k}^{\lfloor \frac{8k-5}{5}\rfloor} I(r,t)$ is an integer interval for fixed $r$.
    To prove the theorem, we show that for each $k \geq 4$ the set $U(k) = \bigcup\limits_{r \geq 2}\bigcup\limits_{t=k}^{\lfloor \frac{8k-5}{5}\rfloor} I(r,t)$ contains all integers greater or equal to $2k+2$.

    Observe that $\min I(2,k) = 2(k+1)$, hence, $2k+2 \in U(k)$.
    Using the fact that $\bigcup\limits_{t=k}^{\lfloor \frac{8k-5}{5}\rfloor} I(r,t)$ is an integer interval for fixed $r$, we get that $U(k)$ does not contain all integers greater or equal to $2k+2$ only in the case when there is an integer between the maximum of $I(r,\lfloor\frac{8k-5}{5}\rfloor)$ and the minimum of $I(r+1,k)$.
    Equivalently, if 
    \begin{align}\label{max_min_intervals}
        1 + \max I(r,\left\lfloor \frac{8k-5}{5}\right\rfloor) \geq \min I(r+1,k)   
    \end{align}
    then $I(r,\lfloor \frac{8k-5}{5}\rfloor) \cup I(r+1,k)$ is an integer interval.
    We have that $1 + \max I(r,\lfloor \frac{8k-5}{5}\rfloor) \geq 1 + r\frac{8k-9}{5}$ and $\min I(r+1,k) = (r+1)(k+1)$.
    Hence, if 
    \begin{align}\label{max_min_intervals_2}
        8kr-9r \geq 5rk + 5r + 5k
    \end{align}
    then \eqref{max_min_intervals} holds.
    Moreover, \eqref{max_min_intervals_2} is equivalent to 
    \begin{align*}
        r(3k - 4) - 5k \geq 0,
    \end{align*}
    which is true for every $r \geq 3$ and every $k \geq 4$, and for $r=2$ and $k \geq 8$.
    Therefore, for $k \geq 8$, the set $U(k)$ contains all integers greater or equal to $2k + 2$, 
    and for $k \in {4,5,6,7}$, $U(k)$ contains all integers greater or equal to $2k+2$ with the possible exception of integers lying between $\max I(2,\lfloor \frac{8k-5}{5} \rfloor)$ and $\min I(3,k)$.

    For $k=4$ we have $\max I(2,\lfloor \frac{8k-5}{5} \rfloor) = 2\cdot(5+2) = 14 $ and $\min I(3,k) = 3 \cdot (4 + 1) = 15$, hence $U(4)$ contains all integers greater or equal to $10$.
    For $k=5$ we have $\max I(2,\lfloor \frac{8k-5}{5} \rfloor) = 2\cdot(7+2) = 18 $ and $\min I(3,k) = 3 \cdot (5 + 1) = 18$, hence $U(5)$ contains all integers greater or equal to $12$.
    For $k=6$ we have $\max I(2,\lfloor \frac{8k-5}{5} \rfloor) = 2\cdot(8+2) = 20 $ and $\min I(3,k) = 3 \cdot (6 + 1) = 21$, hence $U(6)$ contains all integers greater or equal to $14$.
    For $k=7$ we have $\max I(2,\lfloor \frac{8k-5}{5} \rfloor) = 2\cdot(10+2) = 24 $ and $\min I(3,k) = 3 \cdot (7 + 1) = 24$, hence $U(7)$ contains all integers greater or equal to $16$.
    This completes the proof of the theorem.
\end{proof}
Note that the bound on $n$ in Theorem~\ref{thm_powers_cycles} is sharp, since $C_n^k$ for $n < 2k+2$ is a complete graph, and there is no 2-liec of any complete graphs of order at least four~\cite{Baudon Bensmail Przybylo Wozniak}.

\subsection{Proof of Lemma~\ref{lemma_A(t,k)}}

    Throughout the proof of this lemma, assume that $A(t,k)$ is fixed.
    Let $B(t,k)$ be the ordered graph with vertices $b_0, \dots, b_{t+2}$ which is obtained from $A(t,k)$ by adding a new vertex $a_{t+2}$ of degree zero, after relabelling vertices in such a way that 
    $b_i = a_i$ for $i \in [0,t-\ell]$, 
    $b_{t-\ell+1} = a_{t+2}$, 
    and $b_{i+1} = a_{i}$ for $i \in [t-\ell+1, t+1]$. 
    From the properties (a1) -- (a6) of $A(t,k)$ and the construction of $B(t,k)$, the properties \ref{con_b1}--\ref{con_b5} of $B(t,k)$ follows:
    \begin{enumerate}[label=(b\arabic*)]
        \item\label{con_b1} $\deg_{B(t,k)}(b_0) + \deg_{B(t,k)}(b_{t+2}) \geq t+1$.
        \item\label{con_b2} $\{\deg_{B(t,k)}(b_i) \colon i \in [1, t+1]\} = [0,t]$.
        \item\label{con_b3} $b_0$ and $b_{t+2}$ are not incident to any half-edge, and each other vertex is incident to at most one half-edge.
        \item\label{con_b4} $\{b_i \colon i \in [1, \ell] \cup [t-\ell+2, t+1]\}$ is the set of all vertices of $B(t,k)$ incident to half-edges.
        \item\label{con_b5} If $b_ib_j \in E(B(t,k))$ then $|i-j| \leq k$.
    \end{enumerate}

    Note that the property~\ref{con_b5} follows straight from properties~\ref{con_a5} and \ref{con_a6}, and the fact that a new vertex of degree 0 was added right before the last $\ell + 1$ vertices.

    Now, consider $p$ copies of $A(t,k)$, denoted by $A_1, \dots, A_p$, and $q$ copies of $B(t,k)$, denoted by $B_1, \dots, B_q$.
    By $a_i^j$ we will denote the vertex $a_i$ of $A_j$, 
    and by $b_i^j$ we denote the vertex $b_i$ in $B_j$.
    
    If $p=0$, let $G$ be a graph obtained from $B_1, \dots, B_k$ by identifying $b_{t+2}^j$ with $b_0^{j+1}$, and replacing half-edges incident to $b_{t-\ell+2}^j, \dots, b_{t+1}^j$ and $b_1^{j+1}, \dots, b_{\ell}^{j+1}$ with edges $b_{t-\ell +1 + i}^jb_{i}^{j+1}$, for each $j \in \mathbb{Z}_q$, and each $i \in [1,\ell]$.
    
    If $q = 0$, let $G$ be a graph obtained from $A_1, \dots, A_p$ by identifying $a_{t+1}^j$ with $a_0^{j+1}$ and replacing half-edges incident to $a_{t-\ell+1}^j, \dots, a_t^j$ and $a_1^{j+1}, \dots, a_{\ell}^{j+1}$ with edges $a_{t-\ell + i}^ja_{i}^{j+1}$, for each $j \in \mathbb{Z}_p$, and each $i \in [1,\ell]$.

    If $p \neq 0$ and $q \neq 0$, let $G$ be a graph which is obtained from $A_1, \dots, A_p, B_1, \dots, B_q$ after:
    \begin{itemize}
        \item identifying $a_{t+1}^j$ with $a_0^{j+1}$, and replacing half-edges incident to $a_{t-\ell+i}^j$ and $a_i^{j+1}$ with the edge $a_{t-\ell+i}^j a_i^{j+1}$ for each $i \in [1,\ell]$ and $j \in [1,p-1]$,
        \item identifying $a_{t+1}^p$ with $b_0^1$, and replacing half-edges incident to $a_{t-\ell+i}^p$ and $b_i^1$ with the edge $a_{t-\ell+i}^p b_i^1$ for each $i \in [1,\ell]$,
        \item identifying $b_{t+2}^j$ with $b_0^{j+1}$, and replacing half-edges incident to $b_{t+1-\ell+i}^j$ and $b_i^{j+1}$ with the edge $b_{t+1-\ell+i}^j b_i^{j+1}$ for each $i \in [1,\ell]$ and $j \in [1,q-1]$,
        \item identifying $b_{t+2}^q$ with $a_0^1$, and replacing half-edges incident to $b_{t+1-\ell+i}^q$ and $a_i^1$ with the edge $b_{t+1-\ell+i}^q a_i^1$ for each $i \in [1,\ell]$.
    \end{itemize}
    Let the ordering of the vertices of $G$ be given by the list 
    $$L=a_1^1, \dots, a_{t+1}^1, a_1^2, \dots, a_{t+1}^2, \dots, a_{t+1}^p, b_1^1, \dots, b_{t+2}^1, b_1^2, \dots, b_{t+2}^2, \dots, b_{t+2}^q.$$
    Clearly, $G$ has $p(t+1) + q(t+2)$ vertices. 

    Let $C$ be the cycle of length $p(t+1) + q(t+2)$ with $V(C) = V(G)$ and $uv \in E(C)$ whenever $u$ and $v$ are consecutive in the ordering of $V(G)$, or $u$ and $v$ are the first and the last vertex in the ordering, respectively.
    We want to show that $G$ is a subgraph of $C^k$ and that both $G$ and $C^k-G$ are locally irregular.
    Observe that, to show this, it is enough to show that two vertices adjacent in $G$ are at distance at most $k$ in $C$ (hence, $G$ is a subgraph of $C^k$),
    and that two vertices of $G$ with the same degrees are at distance at least $k+1$ in $C$.
    To denote the distance of two vertices $u$ and $v$ in $C$ we will use the notion $\mathrm{dist}_C(u,v)$.

    We first show that $uv \in E(G)$ implies $\mathrm{dist}_C(u,v) \leq k$.
    This is clearly true if both $u$ and $v$ are vertices of the same copy of $A(t,k)$ or $B(t,k)$, see \ref{con_a5}, \ref{con_a6}, and \ref{con_b5}.
    Moreover, if an edge joining two vertices from different copies of $A(t,k)$ and/or $B(t,k)$ is added in the process of creating $G$, such an edge replaces two half-edges incident to vertices whose distance in $C$ is at most $\ell$ (this is easy to see from the construction of $G$, due to the indices of the vertices). 
    Since $\ell \leq \left\lfloor\frac{t}{2}\right\rfloor$, and $t \leq \frac{8k-5}{5}$, we get that the distance of such two vertices in $C$ is at most $k$.
    Hence, $G$ is a subgraph of $C^k$.

    Now, let $u$ and $v$ be two vertices of $G$ such that $\deg_G(u) = \deg_G(v)$.
    The following observation yields the fact that $\mathrm{dist}_C(u,v) \geq k+1$:
    If $q = 0$, then clearly $u = a_i^{j_1}$ and $v=a_i^{j_2}$ for some $i \in [1, t+1]$ and $j_1, j_2 \in \mathbb{Z}_p$, $j_1 \neq j_2$. Since each $A_j$ has exactly $t+1$ vertices, we get that $\mathrm{dist}_C(u,v) \geq t+1$.
    If $q \neq 0$ then $\mathrm{dist}_C(u,v) \geq t+1$, as there are potentially more vertices (namely those with degree zero in $G$) that are on the shortest $u,v$-path in $C$, when comparing to the case when $q=0$.
    It is clear from constructions of $B(t,k)$ and $G$, that the only vertices of degree zero in $G$ are vertices $b_{t - \ell}^j$ for each $j \in [1,q]$. 
    Hence, if $\deg_G(u) = \deg_G(v) = 0$ then $\mathrm{dist}_C(u,v) \geq t+2$.
    
    Thus, in each case, $\mathrm{dist}_C(u,v) \geq t+1 \geq k+1$ whenever $\deg_G(u) = \deg_G(v)$.
    This completes the proof that $G$ and $C^k - G$ are locally irregular subgraphs of $C^k$.
    To obtain a 2-liec of $C^k$, it is sufficient to color $G$ blue and $C^k-G$ red.

\subsection{Proof of Lemma~\ref{lemma_no_half_edges}}

    We show that there is a graph $A(t,k)$ with vertices $a_0, \dots, a_{t+1}$ with properties (a1) -- (a6).
    The result then follows from Lemma~\ref{lemma_A(t,k)}.

    Let $H_t$ be a disconnected almost irregular graph of order $t$.
    Let $M_t$ be the multiset of degrees of vertices of $H_t$, i.e., $M_t = \{0,1, \dots, \left\lfloor\frac{t-1}{2}\right\rfloor, \left\lfloor\frac{t-1}{2}\right\rfloor, \dots, t-2\}$.
    Let $L$ be a list of all $t$ elements of $M_t$ in which odd integers are listed first, in non-decreasing order, followed by even integers in non-increasing order.
    Denote by $l_i$ the $i$-th element of $L$, $i \in [1, |L|]$.

    \begin{claim}\label{claim_li_lj}
        If $l_i + l_j \geq t-1$ then $|i - j| \leq k$.
    \end{claim}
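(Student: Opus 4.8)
The plan is to read off, as explicitly as the doubled value permits, the position in $L$ of each degree, and then bound $|i-j|$ directly. Recall that $M_t$ consists of every integer of $[0,t-2]$ with the single value $m=\lfloor\frac{t-1}{2}\rfloor$ repeated, and that in $L$ the odd values come first in increasing order, followed by the even values in decreasing order. Writing $O$ for the number of odd entries and $V_e$ for the largest even value, an odd value $2a-1$ then sits at position $a$ and an even value $2b$ at position $O+1+\frac{V_e-2b}{2}$, each only up to a $+1$ shift caused by the doubled entry $m$. First I would record $O$, $V_e$, and the location of $m$ in each of the four cases fixed by the parity of $t$ and of $m$; this is the only point at which the doubling must be tracked.

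Next I would fix $i<j$ with $l_i+l_j\ge t-1$ and split into three cases according to which halves of $L$ contain $l_i$ and $l_j$; note that by item~\ref{remark_AIG_discon} of Remark~\ref{remark_AIG} this hypothesis is exactly adjacency of the corresponding vertices in $H_t$, so the claim is really a bandwidth bound for $H_t$ under the ordering $L$. Since all odd entries precede all even entries, the only mixed case is $l_i=2a-1$ odd and $l_j=2b$ even; there the hypothesis reads $a+b\ge\lceil t/2\rceil$, while the position formulas collapse (uniformly in all four parity subcases) to $j-i\le t-(a+b)$, hence $j-i\le t-\lceil t/2\rceil=\lfloor t/2\rfloor$. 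In the two pure cases I would lower-bound the position of the earlier vertex (by $a$ in the odd part, by $O+1+\frac{V_e-2a}{2}$ in the even part) and upper-bound that of the later one, then substitute $a+b\ge\frac{t+1}{2}$ (odd part) or $a+b\ge\frac{t-1}{2}$ (even part) together with the trivial bound on the largest available index; both computations again give $j-i\le\lfloor t/2\rfloor$.

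Finally, the standing hypothesis $t\le\frac{4k-2}{3}$ (for even $t$) or $t\le\frac{4k-3}{3}$ (for odd $t$) forces $t\le 2k+1$ and hence $\lfloor t/2\rfloor\le k$, so the bound $|i-j|\le\lfloor t/2\rfloor$ yields $|i-j|\le k$, as claimed. I expect the main obstacle to be purely organizational: controlling the two possible $+1$ shifts produced by the doubled value $m$ at once across the four parity combinations, so that the estimate lands on exactly $t-(a+b)$ and not higher. Fortunately the inequality $\lfloor t/2\rfloor\le k$ carries considerable slack, so even a crude uniform over-count of the doubling contribution should suffice to close the argument.
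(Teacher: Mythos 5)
Your proposal is correct and takes essentially the same route as the paper's proof: both split into same-parity and mixed-parity cases, dispatch the same-parity case by the length of a parity block (which is at most roughly $t/2$, well below $k$ under the standing bound on $t$), and handle the mixed case by explicit position formulas combined with the hypothesis $l_i + l_j \geq t-1$. The only difference is bookkeeping: the paper first passes to the duplicate-free sublist $L^*$ and proves the bound $k-1$ there, absorbing the repeated value $\lfloor\tfrac{t-1}{2}\rfloor$ as a single extra shift at the end, while you carry the duplicate along as a $+1$ shift inside the position formulas and close with the unified estimate $|i-j|\leq\lfloor t/2\rfloor\leq k$.
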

    \begin{proof}
        Consider the set $S = \{0, \dots, t-2\}$ and the list $L^*$ of all elements of $S$ in which odd integers are listed first, in increasing order, followed by even integers in decreasing order. 
        Clearly, $L^*$ is a sublist of $L$.
        Denote by $l_i^*$ the $i$-th element of $L^*$.
        Note that, to prove Claim~\ref{claim_li_lj}, it is sufficient to show that $l_i^* + l_j^* \geq t-1$ implies $|i-j|\leq k-1$, since there is at most one extra element between any two elements of $L^*$ in $L$ (namely the second occurrence of $\left\lfloor\frac{t-1}{2}\right\rfloor$).
    
        Note that, if $l_i^* \equiv l_j^* \pmod{2}$ for some $i, j \in \{0, \dots, t-2\}$, then $|i - j| \leq \frac{t}{2} - 1$.
        If $t$ is even, we have $\frac{t}{2} - 1 \leq \frac{2}{3}k - \frac{4}{3} \leq k - 1$ for $k \geq 2$.
        If, on the other hand, $t$ is odd, we have $\frac{t}{2} - 1 \leq \frac{2}{3}k - \frac{3}{2} \leq k-1$ for $k \geq 2$.
        Hence, in both cases, we obtain $|i-j| \leq k-1$.
        
        Now, let $i$ be given such that $l_i^*$ is odd.
        We calculate the maximum $j$ such that $l_i^* + l_j^* \geq t-1$.
        Since $l_i^* = 2i-1$, we have $l_j^* \geq t - 2i$.
        The case when $l_j^*$ is odd is solved in the previous paragraph, hence, assume that $l_j^*$ is even.
        Since $l_j^*$ is the $(t-j)$-th smallest even nonnegative integer, we get $2(t-j-1) = l_j^* \geq t-2i$, and from that $j-i \leq \frac{t-2}{2} \leq k-1$.
    \end{proof}

    \begin{claim}\label{claim_sublists_no_half_edges}
        There are lists $L_1$, $L_2$, and $L_3$ such that $L$ is a concatenation of them, i.e., $L = L_1 L_2 L_3$, such that $|L_1| + |L_3| = \left\lceil \frac{t}{2}\right\rceil$, and each element of the set $\{0, \dots, \left\lfloor\frac{t-1}{2}\right\rfloor\}$ is present in $L_1$ or $L_3$. 
        Moreover, if $t \equiv 0 \pmod{4}$ or $t \equiv 3 \pmod{4}$ then $|L_1| + |L_2| \leq k$ and $|L_2| + |L_3| \leq k-1$, and if $t \equiv 1 \pmod{4}$ or $t \equiv 2 \pmod{4}$ then $|L_1| + |L_2| \leq k-1$ and $|L_2| + |L_3| \leq k$.
    \end{claim}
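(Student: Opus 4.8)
The plan is to exhibit the three blocks explicitly by reading off the shape of $L$. Write $m = \lfloor\frac{t-1}{2}\rfloor$, the largest element of the small set $S_0 = \{0,1,\dots,m\}$, and recall that $M_t$ contains each value of $[0,t-2]$ once together with a second copy of $m$. Because $L$ lists the odd values first in increasing order and then the even values in decreasing order, the distinct odd elements of $S_0$ occupy a prefix of $L$ while the distinct even elements of $S_0$ occupy a suffix of $L$. I would therefore take $L_1$ to be the prefix of $L$ containing one copy of each odd value in $S_0$, take $L_3$ to be the suffix containing one copy of each even value in $S_0$, and let $L_2$ be the middle block that remains.

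With this choice, $L_1$ and $L_3$ together contain exactly the $m+1$ distinct values of $S_0$, each once, which yields the membership condition immediately; and since $m+1 = \lceil\frac t2\rceil$ for both parities of $t$, it also gives $|L_1|+|L_3| = \lceil\frac t2\rceil$. The only delicate point is the duplicated value $m$: when $m$ is odd its two copies are adjacent inside the odd prefix, and when $m$ is even they are adjacent inside the even suffix, so in each case $L_1$ (respectively $L_3$) swallows exactly one copy and the duplicate is pushed into $L_2$. Counting then yields $|L_1| = \lceil\frac m2\rceil$, $|L_3| = \lfloor\frac m2\rfloor + 1$, and $|L_2| = t-(m+1) = \lfloor\frac t2\rfloor$.

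For the length inequalities I would first note that $|L_3| - |L_1| = \lfloor\frac m2\rfloor + 1 - \lceil\frac m2\rceil$ equals $0$ when $m$ is odd and $1$ when $m$ is even. Since $m$ is odd precisely when $t\equiv 0,3\pmod 4$ and even precisely when $t\equiv 1,2\pmod 4$, this is exactly the split appearing in the statement. Hence both required inequalities collapse to the single bound $\lceil\frac m2\rceil + \lfloor\frac t2\rfloor = |L_1|+|L_2| \le k-1$: if $m$ is odd then $|L_1|+|L_2| = |L_2|+|L_3|$, so this bound gives both of them at most $k-1$ (in particular $|L_1|+|L_2|\le k$); if $m$ is even then $|L_2|+|L_3| = |L_1|+|L_2|+1$, so the bound gives $|L_1|+|L_2|\le k-1$ and $|L_2|+|L_3|\le k$.

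It remains to verify $\lceil\frac m2\rceil + \lfloor\frac t2\rfloor \le k-1$, and here I would invoke the hypothesis of Lemma~\ref{lemma_no_half_edges}. Substituting $m = \frac t2 - 1$ for even $t$ and $m = \frac{t-1}{2}$ for odd $t$ turns the left-hand side into an explicit linear expression in $t$, and the assumed bounds $t\le\frac{4k-2}{3}$ for even $t$ and $t\le\frac{4k-3}{3}$ for odd $t$ deliver the inequality after clearing denominators and rounding. I expect the only genuine difficulty to be the bookkeeping rather than any idea: one must place the duplicated $m$ in $L_2$ and keep the floor/ceiling arithmetic straight across the four residues of $t$ modulo $4$. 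Once the reduction to the single inequality is in place, each parity is a one-line computation.
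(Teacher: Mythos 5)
Your proposal is correct and is essentially the paper's own proof: choosing $L_1$ to consist of the odd values of $\{0,\dots,\lfloor\tfrac{t-1}{2}\rfloor\}$ and $L_3$ of the even ones yields exactly the block lengths the paper prescribes for each residue of $t$ modulo $4$ (its Table~\ref{table_almost_irregular}), with the duplicated value $\lfloor\tfrac{t-1}{2}\rfloor$ pushed into $L_2$ in both treatments. Your only deviation is organizational --- the parity observation $|L_3|-|L_1|\in\{0,1\}$ collapses the four inequality checks to the single bound $|L_1|+|L_2|\le k-1$, and this bound does follow from \eqref{bound_t_fork} as you sketch: for even $t$ one has $\lceil\tfrac{t-2}{4}\rceil+\tfrac{t}{2}\le\tfrac{3t}{4}\le k-\tfrac{1}{2}$, hence $\le k-1$ by integrality, and for odd $t$ one has $\lceil\tfrac{t-1}{4}\rceil+\tfrac{t-1}{2}\le\tfrac{3t-1}{4}\le k-1$, so the final step closes in all four residues, matching the paper's table check.
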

    \begin{proof}
        Let $t = 4x + y$ for some nonnegative integers $x$ and $y$, $y \leq 3$.
        Let $L_1$ be the list of the first $x$ elements of $L$ if $y \in \{0,1,2\}$, and the list of the first $x+1$ elements of $L$ if $y =3$.
        Let $L_3$ be the list of the last $x$ elements of $L$ if $y = 0$, and the list of the last $x+1$ elements of $L$ if $y \in \{1,2,3\}$.
        Let $L_2$ be the sublist of $L$ such that $L=L_1L_2L_3$.
        For an overview of the lengths of the sublists see Table~\ref{table_almost_irregular}.

        Now, we will show that each element of the set $\{0, \dots, \left\lfloor\tfrac{t-1}{2}\right\rfloor\}$ is present in $L_1$ or $L_3$. 
        One should distinguish four cases depending on the value of $y$. 
        For each of such cases, a simple observation is needed. Hence, we provide it for a case when $y=0$, i.e., $t=4x$; for other cases, similar observations can be made, so we left it on the reader. 
        Suppose that $t=4x$. Then $L_1$ consists of $x$ smallest odd integers from $M_t$ ordered in a non-decreasing order. 
        Since the element $\left\lfloor\frac{t-1}{2}\right\rfloor$ which is twice in $M_t$ is $2x-1$, we have that $L_1 = 1, \dots, (2x-1)$. Similarly, $L_3$ consists of $x$ smallest even integers from $M_t$ ordered in the non-increasing order. Hence, $L_3=(2x-2), \dots, 0$. Clearly, every integer from $[0, \left\lfloor \tfrac{t-1}{2}\right\rfloor]$ is in $L_1$ or $L_3$.

        The fact that $|L_1| + |L_3| = \left\lceil \frac{t}{2}\right\rceil$ can be easily checked using Table~\ref{table_almost_irregular}. 

        Note that in Table~\ref{table_almost_irregular}, values of $k$ and $k-1$ are also provided. 
        These values follow from \eqref{bound_t_fork} and the fact that $k$ and $k-1$ are integers.
        In the case when $t$ is even, we get from \eqref{bound_t_fork} that $k \geq \frac{3t+2}{4}$. In the case when $t=4x$ we have $k \geq 3x+\frac{1}{2}$ and, since $k$ is integer, $k \geq 3x+1$.
        For $t = 4x+2$ we have $k \geq 3x + 2$.
        In the case when $t$ is odd, we get from \eqref{bound_t_fork} that $k \geq \frac{3t+3}{4}$. If $t = 4x+1$, we get $k \geq 3x + \frac{3}{2}$ and, since $k$ is integer, $k \geq 3x + 2$.
        If $t = 4x+3$, we have $k \geq 3x+3$.

       From Table~\ref{table_almost_irregular}, it is easy to see that if $t \equiv 0 \pmod{4}$ or $t \equiv 3 \pmod{4}$ then $|L_1| + |L_2| \leq k$ and $|L_2| + |L_3| \leq k-1$, and if $t \equiv 1 \pmod{4}$ or $t \equiv 2 \pmod{4}$ then $|L_1| + |L_2| \leq k-1$ and $|L_2| + |L_3| \leq k$. 

        
        \begin{table}[h]
            \centering
            \begin{tabular}{c|c|c|c|c|c|c}
            $t$     & $|L_1|$ & $|L_2|$ & $|L_3|$ & $k$   & $k-1$ & $\lfloor (t-1)/2 \rfloor$  \\ \hline
            $4x$    & $x$     & $2x$    & $x$     & $\geq 3x+1$    & $\geq 3x$ & $2x-1$ \\
            $4x+1$   & $x$     & $2x$  & $x+1$     & $\geq 3x + 2$    & $\geq 3x+1$ & $2x$     \\
            $4x+2$  & $x$   & $2x+1$  & $x+1$     & $\geq 3x + 2$ & $\geq 3x + 1$ & $2x$ \\
            $4x+3$  & $x+1$   & $2x+1$  & $x+1$     & $\geq 3x +3$  & $\geq 3x + 2$ & $2x+1$
            \end{tabular}
            \caption{Values considered in the proof of Claim~\ref{claim_sublists_no_half_edges}.}\label{table_almost_irregular}
        \end{table}
    \end{proof}

    In the following, let $L=L_1L_2L_3$ where $L_1$, $L_2$, and $L_3$ are sublists of $L$ with properties stated in the Claim~\ref{claim_sublists_no_half_edges}.
    Note that, since $|L_1|+|L_3| = \left\lceil \frac{t}{2}\right\rceil$, we have that $|L_2| = \left\lfloor \frac{t}{2}\right\rfloor$.
    Moreover, since each element of $[0, \left\lfloor\frac{t-1}{2} \right\rfloor]$ is in $L_1$ or $L_3$, we get that $L_2$ consists of elements of $[\left\lfloor\frac{t-1}{2} \right\rfloor, t-2]$.
    We now show a construction of $A(t,k)$. 
    We distinguish two cases.

    \textbf{Case 1.} Suppose that $t \equiv 0 \pmod{4}$ or $t \equiv 3 \pmod{4}$.
    In this case, according to Claim~\ref{claim_sublists_no_half_edges}, 
    \begin{align}\label{condition_Li}
        |L_1| + |L_2| \leq k \quad \text{and} \quad |L_2| + |L_3| \leq k-1.
    \end{align}
    Denote by $a_1, \dots, a_t$ the vertices of $H_t$ in such a way that $\deg_{H_t}(a_i) = l_i$.
    Let $A(t,k)$ be an ordered graph obtained from $H_t$ by adding new vertices $a_0$ and $a_{t+1}$, edges $a_0a_i$ for each $i \in [1,|L_1|+|L_2|]$, and edges $a_ja_{t+1}$ for each $j \in [|L_1|+1, |L|]$. 
    Clearly, $\deg_{A(t,k)}(a_0) + \deg_{A(t,k)}(a_{t+1}) = |L_1|+2|L_2|+|L_3| = \left\lceil \frac{t}{2}\right\rceil + 2\left\lfloor \frac{t}{2}\right\rfloor \geq t+1$. Hence $\ref{con_a1}$ is satisfied.
    
    If $1 \leq i \leq |L_1|$ or $|L_1|+|L_2| + 1 \leq i \leq |L|$ then $\deg_{A(t,k)}(a_i) = \deg_{H_t}(a_i) + 1 = l_i+1$.
    Since, $L_1L_3$ consists of elements of $[0,\left\lfloor\frac{t-1}{2}\right\rfloor]$, we get 
    \begin{align}\label{small_degrees}
        \left\{ \deg_{A(t,k)}(a_i) \colon i \in [1, |L_1|] \cup [|L_1|+|L_2| + 1, |L|]\right\} = [1, \dots, \left\lfloor\tfrac{t-1}{2}\right\rfloor + 1].
    \end{align}
    For $i \in [|L_1| + 1, |L_1| + |L_2|]$ we have $\deg_{A(t,k)}(a_i) = \deg_{H_t}(a_i) + 2 = l_i+2$.
    Since $L_2$ consists of elements of $[\left\lfloor\frac{t-1}{2} \right\rfloor, t-2]$, we have
    \begin{align}\label{large_degrees}
        \left\{ \deg_{A(t,k)}(a_i) \colon i \in [|L_1|+1, |L_1|+|L_2|]\right\} = [\left\lfloor\tfrac{t-1}{2}\right\rfloor + 2, t].
    \end{align}
    From \eqref{small_degrees} and \eqref{large_degrees} we have that \ref{con_a2} is satisfied.

    Since $A(t,k)$ in our case does not have half-edges, \ref{con_a3} and \ref{con_a4} are satisfied, and $\ell = 0$.
    Conditions \ref{con_a5} and \ref{con_a6} are satisfied due to Claim~\ref{claim_li_lj} and~\eqref{condition_Li}.
    
    Hence, $A(t,k)$ satisfies conditions \ref{con_a1}--\ref{con_a6}, and from Lemma~\ref{lemma_A(t,k)} we have that ${\rm lir}(C_{p(t+1)+q(t+2)}^k) = 2$ for each nonnegative integers $p$ and $q$, such that $p+q \geq 2$.

    \textbf{Case 2.} Suppose that $t \equiv 1 \pmod{4}$ or $t \equiv 2 \pmod{4}$.
    Then
    \begin{align}\label{condition_Li2}
        |L_1| + |L_2| \leq k-1 \quad \text{and} \quad |L_2| + |L_3| \leq k.
    \end{align}
    Denote by $a_1, \dots, a_t$ the vertices of $H_t$ in such a way that $\deg_{H_t}(a_i) = l_{t-i+1}$.
    Let $A(t,k)$ be an ordered graph obtained from $H_t$ by adding new vertices $a_0$ and $a_{t+1}$, edges $a_0a_i$ for each $i \in [1,|L_2|+|L_3|]$, and edges $a_ja_{t+1}$ for each $j \in [|L_3|+1, |L|]$. 
    Clearly, $\deg_{A(t,k)}(a_0) + \deg_{A(t,k)}(a_{t+1}) = |L_3|+2|L_2|+|L_1| = \left\lceil \frac{t}{2}\right\rceil + 2\left\lfloor \frac{t}{2}\right\rfloor \geq t+1$. Hence $\ref{con_a1}$ is satisfied.

    Using the facts that $t = |L|$,  and each element of $[1, \left\lfloor \tfrac{t-1}{2}\right\rfloor]$ is present in $L_1$ or $L_3$, we get
    \begin{align}\label{small_degrees_2}
        \begin{aligned}
            &\left\{ \deg_{A(t,k)}(a_i) \colon i \in [1, |L_3|] \cup [|L_2|+|L_3| + 1, |L|]\right\} \\
            &= \left\{ \deg_{H_t}(a_i) + 1 \colon i \in [1, |L_3|] \cup [|L_2|+|L_3| + 1, |L|]\right\} \\
            &=  \left\{ l_{j} + 1 \colon j \in [1, |L_1|] \cup [|L_1|+|L_2| + 1, |L|]\right\} \\
            &= [1,\left\lfloor \tfrac{t-1}{2}\right\rfloor+1].
        \end{aligned}
    \end{align}
    and similarly
    \begin{align}\label{large_degrees_2}
        \begin{aligned}
            &\left\{ \deg_{A(t,k)}(a_i) \colon i \in [|L_3|+1, |L_2|+|L_3|]\right\} \\
            &= \left\{ \deg_{H_t}(a_i) + 2 \colon i \in [|L_3|+1, |L_2|+|L_3|]\right\} \\
            &= \left\{ l_{t-i+1} + 2 \colon i \in [|L_3|+1, |L_2|+|L_3|]\right\} \\
            &= [\left\lfloor \tfrac{t-1}{2}\right\rfloor+2, t].
        \end{aligned}
    \end{align}
    From \eqref{small_degrees_2} and \eqref{large_degrees_2} we have that \ref{con_a2} is satisfied.

    Since $A(t,k)$ in this case does not have half-edges, \ref{con_a3} and \ref{con_a4} are satisfied, and $\ell = 0$.
    Conditions \ref{con_a5} and \ref{con_a6} are satisfied due to Claim~\ref{claim_li_lj} and~\eqref{condition_Li2}.
    
    Hence, $A(t,k)$ satisfies conditions \ref{con_a1}--\ref{con_a6}, and from Lemma~\ref{lemma_A(t,k)} we have that ${\rm lir}(C_{p(t+1)+q(t+2)}^k) = 2$ for each nonnegative integers $p$ and $q$, such that $p+q \geq 2$.

\subsection{Proof of Lemma~\ref{lemma_half_edges}}

    Let $H_t$ be the disconnected almost irregular graph of order $t$, and let $M_t$ be the multiset of degrees of vertices of $H_t$, i.e., 
    $M_t = \{0, 1, \dots, \left\lfloor \tfrac{t-1}{2} \right\rfloor, \left\lfloor \tfrac{t-1}{2} \right\rfloor, \dots, t-2\}$. 
    Moreover, let
    \begin{align*}
        s_1 &= t + \left\lceil \tfrac{t-1}{2} \right\rceil - 2k + 1,\\
        s_2 &= k - \left\lceil \tfrac{t-1}{2} \right\rceil,\\
        s_3 &= 2k - 1 - t,\\
        s_4 &= k-1- \left\lceil \tfrac{t-1}{2} \right\rceil.\\
    \end{align*}
    Note that $s_1$ is nonnegative and $s_2$, $s_3$, and $s_4$ are all positive integers, since $\frac{4k-1}{3} \leq t \leq \frac{8k-5}{5}$.
    Hence, $s_4 - 1$ is nonnegative and it can represent the length of a list in the following.


    Let $L$ and its sublists $L_1$, $L_2$, $L_3$, $L_4$, and $L_5$ be defined in the following way: 
    \begin{align*}
        L = \underbrace{\left\lfloor \tfrac{t-1}{2} \right\rfloor, \dots, \left\lfloor \tfrac{t-1}{2} \right\rfloor + s_1 - 1}_{L_1}, \underbrace{\left\lfloor \tfrac{t-1}{2} \right\rfloor, \dots, \left\lfloor \tfrac{t-1}{2} \right\rfloor - (s_2 - 1)}_{L_2}, \underbrace{\left\lfloor \tfrac{t-1}{2} \right\rfloor + s_1, \dots, t-2}_{L_3}, \\
        \underbrace{\left\lfloor \tfrac{t-1}{2} \right\rfloor - s_2, \dots, \left\lfloor \tfrac{t-1}{2} \right\rfloor - s_2 -(s_4 - 2)}_{L_4}, 0,  \underbrace{\left\lfloor \tfrac{t-1}{2} \right\rfloor - s_2 - (s_4 - 1), \dots, 1}_{L_5}. 
    \end{align*}
    Hence, $L=L_1L_2L_3L_40L_5$, $L_1L_3$ is a list of all integers from $[\left\lfloor\frac{t-1}{2}\right\rfloor, t-2]$ listed in an increasing order, and $L_2L_4L_5$ is a list of all integers from $[1, \left\lfloor\frac{t-1}{2}\right\rfloor]$ listed in a decreasing order.
    Moreover, $|L_1| = |L_5| = s_1$, $|L_2| = s_2$, $|L_3| = s_3$, $|L_4| = s_4-1$, and $|L| = 2s_1 +s_2 + s_3 + s_4= t$.

    By $l_i$ we denote the $i$-th element of $L$, for $i \in [1, |L|]$. 

    \begin{claim}\label{claim_i_minus_j}
        $|i-j| \leq k$ whenever $l_i + l_j \geq t-1$. Moreover, if $l_i \notin L_5$ and $L_j \in L_5$ then $j-i \leq k-1$.
    \end{claim}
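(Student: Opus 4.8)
The plan is to prove the statement by a purely positional analysis of the list $L$, exploiting the explicit arithmetic form of its five sublists. First I would record the two identities that drive everything: since $s_1+s_2+s_3=k$, the sublist $L_1L_2L_3$ occupies exactly the positions $1,\dots,k$, while $L_4$, the single $0$, and $L_5$ occupy the positions $k+1,\dots,t$; and since $\lfloor\frac{t-1}{2}\rfloor-s_2=(t-1)-k$, the largest value occurring at a position $>k$ equals $(t-1)-k$. It is also worth noting, via item~\ref{remark_AIG_discon} of Remark~\ref{remark_AIG}, that $l_i+l_j\ge t-1$ is exactly the adjacency condition of $H_t$, which is the intuition behind the whole construction.

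Assume $i<j$ and $l_i+l_j\ge t-1$. Because the maximum degree in $M_t$ is $t-2$, both $l_i\ge 1$ and $l_j\ge 1$, so neither $i$ nor $j$ is the position of the $0$. I would then split into three cases according to the partition of positions described above. If $i,j\le k$, then trivially $|i-j|\le k-1$. If $i,j\ge k+1$, both values lie in $L_4\,0\,L_5$ and hence are at most $(t-1)-k$; their sum is at most $2(t-1-k)$, which is strictly less than $t-1$ because $t\le 2k$ (a consequence of $t\le\frac{8k-5}{5}$), contradicting the hypothesis. Thus the only remaining case is $i\le k<j$, in which $j$ lies in $L_4$ or in $L_5$.

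In that case set $T=t-1-l_j$, so that $l_i\ge T$. Using $l_j\le (t-1)-k$ I get $T\ge k>\lfloor\frac{t-1}{2}\rfloor$, so the value $T$ lies in the common range $[\lfloor\frac{t-1}{2}\rfloor,t-2]$ of $L_1L_3$; since $L_1L_3$ lists this range increasingly, the smallest position carrying a value $\ge T$ is the position of $T$ itself, namely $T-\lfloor\frac{t-1}{2}\rfloor+1$ if $T$ falls in the $L_1$-range and $T-\lfloor\frac{t-1}{2}\rfloor+s_2+1$ if it falls in the $L_3$-range. As $l_i\ge T$ forces $i$ to be at least this position, it suffices to bound its distance to $j$. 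Writing $j$ explicitly from the arithmetic progression of $L_4$ (resp.\ $L_5$) in terms of $l_j=t-1-T$, the inequality $j-i\le k$ reduces in each of the four subcases to a one-line linear inequality in $s_2$, $\lfloor\frac{t-1}{2}\rfloor$, $t$ and $k$; every one of them follows from $2\lfloor\frac{t-1}{2}\rfloor\le t-1$, $s_2\ge 1$, and $t\le 2k$.

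Finally, for the refinement ($j\in L_5$), the same computation must be pushed to give the sharper bound $j-i\le k-1$, and this extra margin of one is where the argument is tightest. In the $L_3$-subcase it reduces to $t\le 2k-1$, which again follows from $t\le\frac{8k-5}{5}$, while in the $L_1$-subcase it reduces to $2\lfloor\frac{t-1}{2}\rfloor\le t-3+s_2$; this is immediate when $t$ is even (then $2\lfloor\frac{t-1}{2}\rfloor=t-2$, so $s_2\ge1$ suffices), and for odd $t$ it needs $s_2\ge 2$, which I would deduce from $s_2=k-\frac{t-1}{2}\ge k-\frac{4k-5}{5}=\frac{k+5}{5}>1$. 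The main obstacle is therefore not a single hard idea but the bookkeeping of the four position/subcase combinations together with this parity refinement; I would additionally verify that the degenerate shapes of $L$ (for instance $s_1=0$, or $s_4=1$, which empty $L_1$, or $L_4$ and $L_5$) do not break the position formulas, checking in particular that the ``smallest position with value $\ge T$'' specialises to the $L_3$-formula when $L_1$ is empty.
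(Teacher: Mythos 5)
Your proposal is correct, and it rests on the same foundation as the paper's proof — a positional analysis of the explicit list $L$ driven by the identities $s_1+s_2+s_3=k$ and $\lfloor\tfrac{t-1}{2}\rfloor-s_2=(t-1)-k$ — but the case decomposition is genuinely different, and the difference is instructive. The paper splits on where the \emph{large} value $l_i$ sits: for $l_i\in L_3$ a pure position count gives $|i-j|\le k-1$ for every $j$, and for $l_i\in L_1$ it shows that any partner with $l_j<\lfloor\tfrac{t-1}{2}\rfloor$ must lie in $L_2L_4$, because the last element of $L_1$ plus the first element of $L_5$ already sums below $t-1$; thus the configuration ``$l_i\in L_1$, $l_j\in L_5$'' never occurs and the sharper bound is vacuous there. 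You instead split on the position of $j$ and on where the threshold $T=t-1-l_j$ falls, compute $j$ exactly ($j=T+1$ in $L_4$, $j=T+2$ in $L_5$), lower-bound $i$ by the position of $T$, and verify the bound in all four subcases directly. Notably, your tightest subcase — $j\in L_5$ with $T$ in the $L_1$-range, where for odd $t$ you need the integrality step $s_2\ge\tfrac{k+5}{5}>1$, hence $s_2\ge 2$ — is exactly the configuration the paper proves empty: $j\in L_5$ forces $T\ge k+s_4-1$, while $k+s_4-1>\lfloor\tfrac{t-1}{2}\rfloor+s_1-1$ holds throughout the range $t\le\tfrac{8k-5}{5}$ (both reduce to $2t+\lceil\tfrac{t-1}{2}\rceil\le 4k-3$), so $T$ can never land in the $L_1$-range in that subcase. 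Proving an unconditional bound there is still perfectly valid and makes your argument slightly more robust — you never have to notice the vacuity — at the cost of the extra parity bookkeeping; the paper's route is shorter because the single inequality about the endpoints of $L_1$ and $L_5$ kills that case outright. Your closing attention to the degenerate shapes ($s_1=0$, $s_4=1$), which do occur (e.g.\ $k=4$, $t=5$ gives $s_1=0$), is a point the paper handles only implicitly, since its $L_1$-case is then vacuous.
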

    \begin{proof}
        Note that the sum of every two elements of $L_2L_4L_5$ is smaller than $t-1$. 
        Hence, it suffices to prove the claim in the case when $l_i \in L_1L_3$.
    
        First, suppose that $l_i \in L_3$, i.e., $i \in [s_1 + s_2 + 1, s_1 + s_2 + s_3]$.
        Let $j \in [1, |L|]\setminus\{i\}$.
        Clearly, if $j < i$ then $|i-j| \leq s_1+s_2+s_3 - 1 = k-1$.
        Similarly, if $j >i$ then $|i-j| \leq |L| - (s_1 +s_2 +1) = s_1 + s_3 + s_4 - 1 = k-2$.
        Hence, in both cases, $|i-j| \leq k-1$.
    
        Suppose therefore, in the following that $l_i \in L_1$, i.e., $i \in [1, s_1]$.
        Let $j \in [1, |L|] \setminus \{i\}$ be such that $l_i + l_j \geq t-1$. 
        If $l_j \geq \left\lfloor \frac{t-1}{2} \right\rfloor$ then, clearly $|i-j| \leq s_1 +s_2 +s_3 - 1 = k-1$.
        Thus, in the following, assume that $l_j < \left\lfloor \frac{t-1}{2} \right\rfloor$.
    
        Since $l_i \in L_1$, we have that $l_i = \left\lfloor \frac{t-1}{2} \right\rfloor + i - 1$.
        Consider now the largest $j$ such that $l_j < \left\lfloor \frac{t-1}{2} \right\rfloor$ and $l_i + l_j \geq t-1$.
        Clearly, since $L_2L_4L_5$ is the list of elements of $[1, \left\lfloor \frac{t-1}{2}\right\rfloor]$ listed in a decreasing order, we have $l_j = t-1 - l_i = \left\lceil \frac{t-1}{2} \right\rceil - i +1$.
        It follows from the initial assumption $t \leq \frac{8k-5}{5}$ that the sum of the last element of $L_1$, $\left\lfloor \tfrac{t-1}{2} \right\rfloor + s_1 - 1$, and the first element of $L_5$, $\left\lfloor \tfrac{t-1}{2} \right\rfloor - s_2 - s_4 + 1$ is smaller than $t-1$.
        Hence, $j \leq t - s_1 - 1$, i.e., $l_j \in L_2L_4$.
        
        Note that, after $l_j$, all nonnegative integers smaller than $l_j$ are listed in $L$, and there may or may not be some integers greater than $l_j$, namely if $l_j \in L_2$.
        In either case, for $j$ we have $j \leq t - l_j =\left\lfloor\frac{t-1}{2}\right\rfloor + i$, and, subsequently $|i-j| = j-i \leq \left\lfloor\frac{t-1}{2}\right\rfloor \leq k$.
        This completes the proof of the claim.
    \end{proof}

    Consider now the ordering of vertices $a_1, \dots, a_t$ of $H_t$ given by the list of their degrees $L$.
    Two vertices of $H_t$ are adjacent if the sum of their degrees is at least $t-1$. 
    Let $A(t,k)$ be an ordered graph obtained from $H_t$ by 
    \begin{itemize}
        \item adding vertices $a_0$ and $a_{t+1}$,
        \item adding edges $a_0a_i$ for each $i \in [1,s_1+s_2 +s_3]$,
        \item adding edges $a_ia_{t+1}$ for each $i \in [s_1+s_2+1, s_1+s_2+s_3+s_4]$,
        \item adding a half-edge incident to $a_i$ for each $i \in [1, s_1] \cup [s_1+s_2+s_3+s_4+1, t]$.
    \end{itemize}
    We claim that $A(t,k)$ satisfies conditions \ref{con_a1}--\ref{con_a6}.

    Clearly, $\deg_{A(t,k)}(a_0) + \deg_{A(t,k)}(a_{t+1}) = (s_1+s_2+s_3) + (s_3+s_4) \geq t + 1$, hence \ref{con_a1} holds.

    Note that $a_{s_1+s_2+s_3+s_4}$ is a vertex of degree zero in $H_t$ and, thus, it is of degree one in $A(t,k)$, since no half-edge and only one edge, namely $a_{s_1+s_2+s_3+s_4}a_{t+1}$, is incident to $a_{s_1+s_2+s_3+s_4}$ in $A(t,k)$.
    
    Let $L'$ be the list of degrees of vertices of $A(t,k)$, in the order given by the ordering of vertices of $A(t,k)$. 
    Moreover, let $L'_0$, $L'_1$, $L'_2$, $L'_3$, $L'_4$, $L'_5$, and $L'_6$ be the sublists of $L'$ of lengths 1, $s_1$, $s_2$, $s_3$, $s_4-1$, $s_1$, and $1$ respectively, such that
    \begin{align*}
            L' = L'_0L'_1L'_2L'_3L'_41L'_5L'_6.
    \end{align*}
    To show that \ref{con_a2} holds for $A(t,k)$, it is enough to show that $L'_1L'_2L'_3L'_41L'_5$ consists of all integers from $[1,t]$.
    Clearly, $|L'_1L'_2L'_3L'_41L'_5| = |L| = t$.
    Denote by $l'_i$ the $i$-th element of $L'$.
    From the construction of $A(t,k)$ the following observation follows:
    \begin{align}\label{new_elements_of_list}
        l'_i = \left\{\begin{matrix*}[l] l_i + 2 & \text{if } i \in [1, s_1] \cup [s_1+s_2+1, s_1 + s_2+s_3], \\ l_i + 1 & \text{otherwise.} \end{matrix*}\right.
    \end{align}
    Using \eqref{new_elements_of_list} and the fact that $L_1L_3$ consists of all elements of $[\left\lfloor\frac{t-1}{2}\right\rfloor, t-2]$, we get that $L'_1L'_3$ consists of all elements of $[\left\lfloor\frac{t-1}{2}\right\rfloor+2, t]$.
    Similarly $L'_2L'_4L'_5$ consists of all elements of $[2, \left\lfloor\frac{t-1}{2}\right\rfloor+1]$, and, consequently, $L'_1L'_2L'_3L'_41L'_5$ consists of all elements of $[1,t]$.
    Since $|L'| = t$, each element of $[1,t]$ occurs exactly once in $L'$, and, thus, \ref{con_a2} holds for $A(t,k)$.

    It is clear from the definition of $A(t,k)$ that \ref{con_a3} holds. 
    Moreover, \ref{con_a4} holds, since half-edges are incident only to vertices from $\{a_i \colon i \in [1, s_1] \cup [s_1+s_2+s_3+s_4+1, t]\}$, and $t - (s_1+s_2+s_3+s_4) = s_1 \leq \frac{3}{2}t - 2k +1 \leq \frac{t-1}{4} \leq \lfloor \frac{t}{2} \rfloor$ for $t \leq \frac{8k-5}{5}$.

    From Claim~\ref{claim_i_minus_j} and the facts that if $a_0a_i$ and $a_ja_{t+1}$ are edges of $A(t,k)$ then $i \leq s_1+s_2+s_3 = k$ and $j \geq s_1 + s_2 + 1 = t-k + 2$, it follows that \ref{con_a5} and \ref{con_a6} hold.
    Thus, $A(t,k)$ satisfies all conditions \ref{con_a1}--\ref{con_a6}.
    The result then follows directly from Lemma~\ref{lemma_A(t,k)}.

\section{Concluding remarks}

The Local Irregularity Conjecture says that the value of the locally irregular chromatic index of every graph is at most three, with the exception of special cacti from $\mathfrak{T}'$ that are not locally irregular colorable, and a single graph which admits a 4-liec but no 3-liec.
Motivated by the results on graphs of various classes, for which the exact value of the locally irregular chromatic index was determined, and the $(2,2)$-Conjecture, we introduced a problem of determining $\mathcal{D}_\mathrm{lir}(G)$, see Problem~\ref{main_problem}.
This new parameter provides, in some sense, a measure of how far graphs are from admitting a 2-liec.
Namely, we asked for the minimum number of edges which need to be doubled so the resulting multigraph has a 2-liec in which parallel edges are colored the same.
Such a definition relates this new problem closely to $(2,2)$-Conjecture.

However, when considering locally irregular colorings of multigraphs, it seems natural to omit the condition of the same color on parallel edges.
This was, for example, done in~\cite{Grzelec wozniak} and~\cite{Grzelec wozniak2}.
Such an approach then yields the formulation of the new problem:
\begin{problem}\label{problem_red_blue_allowed}
    Let $G$ be a simple connected graph different from $K_2$.
    What is the minimum number of edges $\mathcal{D}_\mathrm{lir}'(G)$ of $G$ which need to be doubled so the resulting multigraph admits a $2$-liec?
\end{problem}
Note that such a formulation of the problem thus allows parallel edges to be of the same color as well as of different colors.
Hence, $\mathcal{D}_\mathrm{lir}'(G) \leq \mathcal{D}_\mathrm{lir}(G)$, and as in the case of $\mathcal{D}_\mathrm{lir}(G)$, $\mathcal{D}_\mathrm{lir}'(G) \geq 1$ whenever $\mathrm{lir}(G) >2$ or $G$ is not locally irregular colorable.
Note that, when Problem~\ref{main_problem} was formulated, on top of $K_2$, also $K_3$ had to be left out from consideration, since it is not possible to double some of the edges of $K_3$ and find a 2-liec of the resulting multigraph in which parallel edges are colored the same.
However, in the case of  Problem~\ref{problem_red_blue_allowed}, such an exception of $K_3$ is not needed, see Figure~\ref{fig:C_3_one _doubling}.
\begin{figure}[h]
    \centering
    \begin{tikzpicture}[scale=0.7]
    \begin{pgfonlayer}{nodelayer}
    \node [style={black_dot}] (0) at (0, 1) {};
    \node [style={black_dot}] (1) at (-1.5, -1) {};
    \node [style={black_dot}] (2) at (1.5, -1) {};
    \end{pgfonlayer}
    \begin{pgfonlayer}{edgelayer}
    \draw [style={blue_edge}] (0) to (1);
    \draw [style={blue_edge}, bend left=15] (1) to (2);
    \draw [style={red_edge}] (0) to (2);
    \draw [style={red_edge}, bend left=15] (2) to (1);
    \end{pgfonlayer}
    \end{tikzpicture}
    \caption{$\mathcal{D}_\mathrm{lir}'(K_3) = 1$.}
    \label{fig:C_3_one _doubling}
\end{figure}
Moreover, if Local Irregularity Conjecture for 2-multigraphs is true then Problem~\ref{problem_red_blue_allowed} has a solution for every connected graph, except for $K_2$.

If $\mathcal{D}_\mathrm{lir}(G) = 1$, we have that $\mathcal{D}_\mathrm{lir}'(G) = 1$ as well, and if $\mathrm{lir}(G) \leq 2$ then $\mathcal{D}_\mathrm{lir}(G) = 0$. 
Hence Problem~\ref{problem_red_blue_allowed} is fully solved (or $\mathcal{D}_\mathrm{lir}'(G)$ is upper bounded by one) for every graph that was shown to have the locally irregular chromatic index at most two, majority of complete graphs, paths, some cycles, trees and split graphs which are not complete graphs. The rest is still widely open. In particular, for complete graphs, there are only five open cases, hence the solution for them might be obtained using the similar method (involving a computer program) that was used to show that $\mathcal{D}_\mathrm{lir}(K_n) \geq 2$ if $n \in \{6, \dots, 10\}$.

Note also that the proof of Corollary~\ref{corollary_D_not_constant} would stay almost unchanged in the case of $\mathcal{D}_\mathrm{lir}'(G)$, since a doubling is needed on the considered pendant triangles of graphs from $\mathfrak{T}$ no matter which of the problems is considered. 
This also gives an insight on providing a bound on $\mathcal{D}_\mathrm{lir}'(G)$ for $G \in \mathfrak{T}^*$ similar to Theorem~\ref{special_cacti}.


\medskip\noindent
{\bf Acknowledgement.} This work was supported by the Slovak Research and Development Agency under the contract No. APVV-23-0191 (Madaras, Onderko, Soták), and partially by the program “Excellence initiative – research university” for the AGH University of Krakow (Grzelec).

\end{document}